\newtheorem{theorem}{Theorem}
\newtheorem{proposition}[theorem]{Proposition}
\newtheorem{lemma}[theorem]{Lemma}
\newtheorem{corollary}[theorem]{Corollary}
\theoremstyle{definition}
\newtheorem{definition}[theorem]{Definition}
\newtheorem{example}[theorem]{Example}
\newtheorem{remark}[theorem]{Remark}
\numberwithin{theorem}{section}
\providecommand{\leftsquigarrow}{%
  \mathrel{\mathpalette\reflect@squig\relax}%
}
\newcommand{\reflect@squig}[2]{%
  \reflectbox{$\m@th#1\rightsquigarrow$}%
}
\newenvironment{situation}{
   \bigskip

  \noindent \textbf{Standard Situation:} }
{
  \bigskip
}
\newcommand{\Spec}{\mathrm{Spec}\,}
\newcommand{\pp}{\mathfrak{p}}
\newcommand{\qq}{\mathfrak{q}}
\newcommand{\N}{\mathbb{N}}
\newcommand{\Z}{\mathbb{Z}}
\newcommand{\ol}[1]{\overline{#1}}
\newcommand{\core}{\mathrm{core}}
\newcommand{\Div}{\mathrm{Div}}
\newcommand{\colim}{\mathop{\mathrm{colim}}}
\newcommand{\Cech}{\v{C}ech }
\newcommand{\Tor}{\mathrm{Tor}}
\newcommand{\trop}{\mathrm{trop}}
\title{Contractions of subcurves of families of log curves}
\author{Sebastian Bozlee}
\email{Sebastian.Bozlee@tufts.edu}
\begin{document}

\begin{abstract}
Let $C$ be a nodal curve, and let $E$ be a union of semistable subcurves of $C$. We consider the problem of contracting the connected components of $E$ to singularities in a way that preserves the genus of $C$ and makes sense in families, so that this contraction may induce maps between moduli
spaces of curves.

In order to do this, we introduce the notion of mesa curve, a nodal curve $C$ with a logarithmic structure and a piecewise linear function $\ol{\lambda}$ on the tropicalization of $C$. This piecewise linear function determines a subcurve $E$. We then construct a contraction of $E$ inside of $C$ for families of mesa curves. Resulting singularities include the elliptic Gorenstein singularities.
\end{abstract}

\maketitle

\tableofcontents

\section{Introduction}

This paper constructs a general contraction map for families of log curves, generalizing contraction constructions found in \cite{rsw}, \cite{keli_thesis}, and \cite{hassett_hyeon}. Similar contraction constructions have been used to construct modular desingularizations of moduli of stable maps in genus one (see \cite{keli_thesis}, \cite{rsw}) and genus two (see \cite{luca_francesca}), as well as constructing spaces appearing in the Hassett-Keel program for $\ol{M}_{g,n}$.

Let $\tau : C \to \ol{C}$ be a map of proper algebraic curves, and let $E$ be the union of the irreducible components of $C$ on which $\tau$ is constant. We say that $\tau$ is a \emph{contraction of $E$ inside of $C$} if
\begin{enumerate}
  \item $\tau$ is surjective with connected fibers,
  \item $\tau$ restricts to an isomorphism on $C - E$, and
  \item for each connected component $Z$ of $E$, we have that $\tau(Z)$ is a singularity with number of branches equal to $|Z \cap \ol{C - Z}|$
  and with genus equal to the arithmetic genus of $Z$.
\end{enumerate}

Here, the genus of a singular point $p$ is $\delta(p) - m(p) + 1$, where $\delta(p)$ is the $\delta$-invariant of $p$ (see \cite[Chapter IV, Exercise 1.8]{hartshorne}) and $m(p)$ is its number of branches.

A \emph{family of curves} is a morphism $\pi : C \to S$ of algebraic spaces so that $\pi$ is flat and proper with connected, reduced, 1-dimensional geometric fibers. We call a closed subscheme $E \subseteq C$ a \emph{family of subcurves} if the geometric fibers of $E \to S$ are unions of connected components of the geometric fibers of $C \to S$. If $\pi : C \to S$ is a family of curves and $E \subseteq C$ is a family of subcurves,
we say that a diagram
\[
\xymatrix{
  C \ar[r]^{\tau} \ar[rd]_{\pi} & \ol{C} \ar[d]^{\ol{\pi}} \\
 & S
}
\]
is a \emph{contraction of $E$ inside of $C$} if
\begin{enumerate}
  \item $\ol{\pi} : \ol{C} \to S$ is a family of curves,
  \item $\tau$ restricts to an isomorphism on $C - E$, and
  \item for each geometric point $\ol{s}$ of $S$, the restriction $\tau_{\ol{s}} : C \times_S \ol{s} \to \ol{C} \times_S \ol{s}$ is a contraction of $E \times_S \ol{s}$ inside of $C \times_S \ol{s}$.
\end{enumerate}
We also say that $\ol{C}$ is a contraction of $C$.

\medskip

Contractions arise frequently in the study of moduli spaces of curves. A basic reason for their ubiquity is the semistable reduction theorem:
informally, any limit of a family of smooth curves is a contraction of a nodal curve. Moreover, given any two limits of a family of smooth curves, both are contractions of a common nodal curve. 

Of particular interest to us are contractions which induce morphisms between moduli spaces of curves.
Suppose $\ol{\mathcal{M}}_1$ and $\ol{\mathcal{M}}_2$ are moduli spaces of curves, that is, algebraic stacks whose
$S$-points are identified with certain families of curves $\pi : C \to S$ possibly equipped with some extra data.
To define a morphism $\ol{\mathcal{M}}_1 \to \ol{\mathcal{M}}_2$, one first constructs a contraction
\[
\xymatrix{
  C \ar[r]^{\tau} \ar[rd]_{\pi} & \ol{C} \ar[d]^{\ol{\pi}} \\
 & S
}
\]
for each family of curves $\pi : C \to S$ in $\ol{\mathcal{M}}_1(S)$, so that $\ol{\pi} : \ol{C} \to S$ lies in $\ol{\mathcal{M}}_2(S)$. If these contractions are compatible with pullback, we obtain a morphism $\ol{\mathcal{M}}_1 \to \ol{\mathcal{M}}_2$ by taking each family $C \in \ol{\mathcal{M}}_1(S)$ to its contraction $\ol{C} \in \ol{\mathcal{M}}_2(S)$.

Equivalently, one may construct a contraction
\[
\xymatrix{
  \mathcal{C} \ar[r]^{\tau} \ar[rd]_{\pi} & \ol{\mathcal{C}} \ar[d]^{\ol{\pi}} \\
 & \ol{\mathcal{M}}_1
}
\]
where $\mathcal{C} \to \ol{\mathcal{M}}_1$ is the universal curve and $\ol{\mathcal{C}} \to \ol{\mathcal{M}}_1$ is a family of curves defining a map to $\ol{\mathcal{M}}_1 \to \ol{\mathcal{M}}_2$.

For example, in \cite{hassett}, contraction maps give morphisms between the moduli spaces of weighted-stable marked curves given by different weightings of marked points.
In \cite{hassett_hyeon}, a contraction map is used to construct a morphism from the Deligne-Mumford space of curves to the space of pseudostable curves.
In Keli Santos-Parker's thesis \cite{keli_thesis} (later published in \cite{rsw}), this procedure is used with an extra step. To resolve the indeterminacy
of the rational maps between the Deligne-Mumford space $\ol{\mathcal{M}}_{1,n}$ and Smyth's spaces
$\ol{\mathcal{M}}_{1,n}(m)$ of $m$-stable  curves (see \cite{smyth_mstable}), Santos-Parker constructs two maps
\[
  \ol{\mathcal{M}}_{1,n} \leftarrow \ol{\mathcal{M}}_{1,n}^{rad} \to \ol{\mathcal{M}}_{1,n}(m).
\]
The first map is a blowup of $\ol{\mathcal{M}}_{1,n}$. Logarithmic geometry gives this blowup a modular interpretation: it parametrizes
so-called \emph{radially-aligned log curves}. Using one more piece of logarithmic data for each $m$, Santos-Parker is able to construct a contraction
of (a slight modification of) the universal curve of $\ol{\mathcal{M}}_{1,n}^{rad}$ to a family of $m$-stable curves, which yields the second map.

\medskip

For each morphism of moduli spaces mentioned above, the contraction was constructed by choosing a line bundle $\mathcal{L}$
on the universal curve $\mathcal{C}$, then using the contraction
\[
\xymatrix{
  \mathcal{C} \ar[rr]^-{\tau} \ar[rd]_{\pi} & & \ol{\mathcal{C}} := \underline{\mathrm{Proj}}_S \bigoplus_{k = 0}^\infty \pi_*(\mathcal{L}^{\otimes k}) \ar[ld]^{\ol{\pi}} \\
  & \ol{\mathcal{M}}_1 &
}
\]

This construction is elegant, but there is a frustrating point: in general, $\pi_*$ does not commute with base change, so one cannot see the image of an individual curve $C$
by computing $\mathrm{Proj} \bigoplus_k \Gamma(C, \mathcal{L}|_C)$. Unless you get lucky, it is necessary to first compute a sufficiently
nice smoothing family of $C$, compute the contraction of this family, and then restrict back to $C$.

\medskip

Our goal in this paper is to construct a contraction for families of log curves so that the construction of the contraction commutes with base change.
Our construction will be capable of contracting multiple subcurves of any genus inside of a family of any genus. The singularities that result include the elliptic Gorenstein singularities, but
others occur as well.

The move that makes this possible is a log geometric encoding of the contraction data in what we will call a \emph{mesa curve}, consisting of a
log curve $\pi : C \to S$ of any genus and a section $\ol{\lambda}$ of the characteristic sheaf of $C$ which we call a mesa. We give a formal definition below in
Definition \ref{def:mesa_curve}, and only sketch its flavor here.

A log curve $C$ has an underlying pointed nodal curve (see Section \ref{sec:logcurves} below). Recall that the dual graph of a pointed nodal curve $C$
has a vertex for each component, an edge for each node, and a ``leg" or ``half-edge" for each marked point of $C$. A log curve $C$
carries additional information that we use as a ``length" for each edge of its dual graph. We call the dual graph together with these lengths
the tropicalization of $C$. The datum $\ol{\lambda}$ is identified with a piecewise linear function on the tropicalization of $C$, see Figure \ref{fig:mesa_example_early}.

\begin{figure}
\begin{center}
\includegraphics[width=\textwidth]{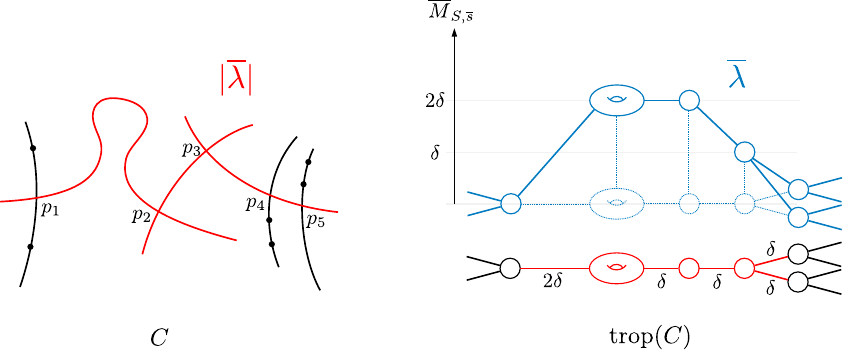}
\end{center}
\caption{A log curve $C$, a section $\ol{\lambda}$ of its characteristic sheaf, and the support $|\ol{\lambda}|$ of that section.}
\label{fig:mesa_example_early}
\end{figure}

We require that this piecewise linear function $\ol{\lambda}$ has a particular shape. (Hence the name ``mesa.") The locus where $\ol{\lambda}$
is nonzero determines a subcurve $E = |\ol{\lambda}|$ which we call the support of $\ol{\lambda}$. There is also an associated invertible sheaf
$\mathscr{O}_C(-\ol{\lambda})$ and we require that $H^1(E, \mathscr{O}_C(-\ol{\lambda}) \otimes \mathscr{O}_E) = 0$.

Our main theorem is that mesa curves admit a contraction map collapsing $E$ compatibly with base change and resulting in a flat family of singular curves.

\begin{theorem} \label{thm:contraction}
Let $S$ be an fs log scheme, $(\pi : C \to S, \ol{\lambda})$ a mesa curve, and $E = |\ol{\lambda}|$. Then there is a contraction of $E$ inside of
$C$
\[
   \xymatrix{
   C \ar[r]^\tau \ar[dr]_\pi & \ol{C} \ar[d]^{\ol{\pi}} \\
   & S
  }
\]
so that
\begin{enumerate}
  \item if a connected component of $E \times_S {\ol{s}}$ is the support of a steep mesa of genus one, its image is an elliptic Gorenstein singularity;
  \item for each morphism of schemes $T \to S$, there is a natural isomorphism $\ol{C \times_S T} \cong \ol{C} \times_S T$, compatible with the maps from $C$ and to $S$.
\end{enumerate}
\end{theorem}
We will define ``steep mesa" following Definition \ref{def:simple_mesa}. The construction of the contracted curve is given locally on $S$ in Definition \ref{def:contraction}.

Our method of proof is to explicitly construct a structure sheaf for $\ol{C}$, then verify that it has the desired properties. This requires a fair amount of technical work compared to constructing a line bundle, but we gain commutativity with base change and a relatively explicit description of the image of a particular curve, see Proposition \ref{prop:singularity_has_correct_genus}.

We pause for a moment to describe the resulting ring of functions independently of log structures: let $C$ be a nodal curve and $E$ a connected semistable subcurve of genus $g$. Let $Z$ be the union of the irreducible components of $C$
not contained in $E$, and let $x_1, \ldots, x_m$ be local parameters on $Z$ of the points $p_1, \ldots, p_m$ where $Z$ meets $E$. Then for any log structure and choice of $\ol{\lambda}$ making $C$ into a mesa curve with $E = |\ol{\lambda}|$,
there is an open neighborhood $U$ of $\tau(E)$ in $\ol{C}$ so that
\[
  \Gamma(U, \mathscr{O}_{\ol{C}}) = \bigg\{ f \in \mathscr{O}_Z(U \cap Z) \, \, \bigg| \, \, f(p_i) = f(p_j) \text{ for all }i,j \text{ and } \left[ \frac{\partial f}{\partial x_i}(p_i) \right]_{i = 1}^m \in V \bigg\}
\]
where $V$ is a linear subspace of $k^m$ of codimension $g$. We give several examples in Section \ref{ssec:reg_functions}. These examples show that possible singularities of $\ol{C}$ include all of the elliptic Gorenstein singularities (such as cusps and tacnodes), elliptic Gorenstein singularities meeting a rational $n$-fold point transversally, and
a union of cusps meeting transversally.

\subsection{Future directions}

A potential application of this result is to the construction and classification of alternative modular compactifications of $\mathcal{M}_{g,n}$. To accomplish this,
we consider logarithmic modifications
\[
  \ol{\mathcal{M}}_{g,n} \leftarrow \mathcal{M}
\]
of the Deligne-Mumford stack of curves, and then identify the mesas on the universal curve of $\mathcal{M}$.
The problem of identifying these mesas is essentially combinatorial. By iterating through them, we iterate through many contractions of the universal curve.
Each such contraction yields a morphism from $\mathcal{M}$ to the ``moduli space of all smoothable curves" $\mathcal{V}_{g,n}$
\cite[Appendix B]{smyth_zstable}. The images of these morphisms are candidates for alternative modular compactifications of $\mathcal{M}_{g,n}$ in the sense of \cite{smyth_zstable}.

The author began an exploration of the possibilities in his thesis. Much remains to be done, but already in forthcoming work we expect
to show that this technique identifies new semistable compactifications of $\mathcal{M}_{1,n}$. These new compactifications appear to exhaust the semistable modular
compactifications of $\mathcal{M}_{1,n}$ with reduced Gorenstein curves and distinct markings.

In \cite{luca}, Battistella constructs modular compactifications of $\mathcal{M}_{2,n}$ by semistable Gorenstein curves in the spirit of Smyth's
$m$-stable spaces. It would be tempting to imitate Santos-Parker and produce a resolution of the rational map between $\ol{\mathcal{M}}_{2,n}$
and Battistella's spaces. Regrettably, our construction does not produce contractions to the genus two Gorenstein singularities considered in
\cite{luca}: the topological types of the semistable models of these singularities are precisely those that fail to satisfy the
$H^1(E, \mathscr{O}_E(-\ol{\lambda})) = 0$ condition. However, the divisor $D$ considered by Battistella in this context strongly resembles our
$\ol{\lambda}$. It is possible that a more refined formula for $\ol{C}$ can be found which includes contractions to Gorenstein singularities of
genus two.

Another possible application is the construction of modular desingularizations of spaces of stable maps.
Santos-Parker's contraction construction was applied in \cite{rsw} to give a modular interpretation of the Vakil-Zinger desingularization
of the space of genus stable maps originally constructed in \cite{vakil_zinger}. In genus two, Hu, Li, and Niu have constructed a desingularization
of the space of stable maps in the spirit of \cite{vakil_zinger}. Battistella and Carrocci show in the preprint \cite{luca_francesca}
that a moduli problem similar to that of Santos-Parker yields a modular desingularization of the space of stable genus two maps. With more
sophistication, contractions informed by log geometry may yield modular desingularizations of stable maps in yet higher genus.

\section*{Acknowledgements}
The content of this article forms the first part of my doctoral thesis at the University of Colorado at Boulder. I would like to thank my advisor, Jonathan Wise, for his inexhaustible patience and guidance, without which this work would not be possible. I would also like to acknowledge Dan Abramovich, Luca Battistella, Renzo Cavalieri, Francesca Carocci, Qile Chen, Andy Fry, Brendan Hassett, Leo Herr, Keli Parker, Dhruv Ranganathan, Hanson Smith, David Smyth, and John Willis for their interest, encouragement, and many helpful conversations.

\section{Preliminaries on Logarithmic Geometry}

\subsection{Log structures}

For a more comprehensive introduction to log structures, the reader may wish to consult \cite{kato_log_structures} or \cite{ogus}.
All monoids are assumed commutative. For an algebraic space $X$, regard $\mathscr{O}_X$ as a sheaf of monoids on the small \'etale site of $X$ with the operation induced by the multiplication of $\mathscr{O}_X$.

\begin{definition}
 A \emph{log structure} on an algebraic space $X$ consists of
\begin{enumerate}
  \item an \'etale sheaf of monoids $M_X$ on $X$ and
  \item a morphism of sheaves of monoids $\epsilon : M_X \to \mathscr{O}_X$
\end{enumerate}
so that $\epsilon$ restricts to an isomorphism $\epsilon : \epsilon^{-1}(\mathscr{O}_X^*) \to \mathscr{O}_X^*$.

The pair  $(X, \epsilon : M_X \to \mathscr{O}_X)$ is called a \emph{log algebraic space}. We will often abbreviate $(X, \epsilon: M_X \to \mathscr{O}_X)$ to $(X, M_X)$ or just $X$. It is convenient to identify $\epsilon^{-1}(\mathscr{O}_X^*)$ with
$\mathscr{O}_X^*$. The quotient sheaf of monoids $\ol{M}_X := M_X / \epsilon^{-1}(\mathscr{O}_X^*)$ is called the \emph{characteristic sheaf} of $X$.
\end{definition}

Throughout this section, when we take a stalk of an \'etale sheaf at a geometric point, we intend the \'etale notion of stalk.

Given log algebraic spaces $(X, \epsilon : M_X \to \mathscr{O}_X)$ and $(Y, \eta: M_Y \to \mathscr{O}_Y)$, a \emph{morphism of log algebraic spaces} from $X$ to $Y$ is a morphism of algebraic spaces $f : X \to Y$ and a morphism $f^\flat : f^{-1}M_Y \to M_X$
so that
\[
  \xymatrix{
   f^{-1}M_Y \ar[r]^{f^\flat} \ar[d]_{f^{-1}\eta} & M_X \ar[d]^{\epsilon} \\
   f^{-1}\mathscr{O}_Y \ar[r]_{f^\sharp} & \mathscr{O}_X
  }
\]
commutes.

It is convenient to have a way to specify log structures without explicit reference to $\mathscr{O}_X^*$. If $e : M \to \mathscr{O}_X$ is any morphism from a sheaf of monoids $M$ to
$\mathscr{O}_X$, we may define the \emph{associated log structure} to be the dashed arrow in the pushout diagram of \'etale sheaves of monoids on $X$ below:
\[
  \xymatrix{
    e^{-1}(\mathscr{O}_X^*) \ar[r] \ar[d]_{e} & M \ar[ddr]^e \ar[d] \\
    \mathscr{O}_X^* \ar[drr] \ar[r] & M^a \ar@{-->}[dr] \\
   & & \mathscr{O}_X
  }
\]

Given a morphism of algebraic spaces $f : X \to Y$ and a log structure $\eta : M_Y \to Y$, the \emph{inverse image log structure} on $X$, denoted $f^*M_Y$, is the log structure associated to the composition $f^{-1}M_Y \to f^{-1}\mathscr{O}_Y \to \mathscr{O}_X$. If $X$ has a log structure and $f : X \to Y$ is a morphism of log algebraic spaces, then there is an induced morphism of sheaves $f^*M_Y \to M_X$. If this morphism is an isomorphism, we say $f$ is \emph{strict}.

A \emph{chart} for a log structure $\epsilon : M \to \mathscr{O}_X$ is a morphism of monoids $e : P \to \Gamma(X, \mathscr{O}_X)$ so that when we take the induced morphism $e : \underline{P} \to \mathscr{O}_X$ from a constant sheaf of
monoids, then the associated log structure of this morphism, we arrive at a log structure isomorphic to $\epsilon : M \to \mathscr{O}_X$. We think of a chart as a choice of global generators of the log structure. A log structure is said to be
\emph{quasicoherent} if it admits a chart \'etale locally.

For each monoid $P$, we may give $\Spec \Z[P]$ a log structure associated to the usual map $P \to \Z[P]$. A chart $e : P \to \Gamma(X, \mathscr{O}_X)$ is then equivalent to a strict map of algebraic spaces $X \to \Spec \Z[P]$. Analogously, a \emph{chart of a morphism} $f : X \to Y$
of log algebraic spaces is a commutative square
\[
  \xymatrix{
    X \ar[d]_f \ar[r] & \Spec \Z[Q] \ar[d] \\
    Y \ar[r] & \Spec \Z[P]
  }
\]
with strict horizontal arrows where the right vertical map is induced by a morphism $P \to Q$ of monoids.

A monoid $P$ is called \emph{finitely generated} if there is a surjective morphism $\N^r \to P$ for some integer $r$. There is a groupification functor $P \mapsto P^{gp}$ from monoids to groups, computed by formally adding inverses to all elements
of $P$. A monoid $P$ is said to be \emph{integral} if the natural morphism $P \to P^{gp}$ is injective. If $P$ is finitely generated and integral, we say $P$ is \emph{fine}. An integral monoid $P$ is \emph{saturated} if, for any $x \in P^{gp}$ and $n \in \N$ so that $n \cdot x \in P$, we have $x \in P$. A monoid that is both fine and saturated is called \emph{fs}. A log structure $\epsilon : M_X \to \mathscr{O}_X$
is said to be \emph{coherent} (resp. \emph{integral}, \emph{fine}, \emph{saturated}, \emph{fs}) if it locally has a chart by finitely generated (resp. integral, fine, saturated, fs) monoids.

A morphism $i : T' \to T$ of log algebraic spaces is called an \emph{exact closed immersion} if it is strict and the underlying map of algebraic spaces is a square-zero extension. A morphism $f : X \to Y$ of fine log algebraic spaces is called
\emph{log smooth} if
\begin{enumerate}
  \item The underlying map of algebraic spaces is locally of finite presentation;
  \item For any diagram of solid arrows
  \[
    \xymatrix{
      T' \ar[r] \ar[d]_{i} & X \ar[d]^f \\
      T \ar[r]  \ar@{-->}[ru] & Y
    }
  \]
  where $i : T' \to T$ is an exact closed immersion of affine log schemes, there is a morphism of log algebraic spaces completing the diagram.
\end{enumerate}
The underlying morphism of algebraic spaces of a log smooth map need not be smooth.

A homomorphism of integral monoids $h : Q \to P$ is \emph{integral} if the induced map of rings $\Z[Q] \to \Z[P]$ is flat (cf. \cite[Proposition (4.1)]{kato_log_structures}). A morphism $f : X \to Y$ of integral log algebraic spaces is \emph{integral} if, for
any $x \in X$, the induced morphism $\ol{M}_{Y, \ol{f(x)}} \to \ol{M}_{X, \ol{x}}$ is integral.

\subsection{Log curves} \label{sec:logcurves}

\begin{definition} (cf. \cite[Definition 1.2]{fkato_deformations}) Let $S$ be an fs log scheme. A \emph{log curve} over $S$ is a log smooth and integral morphism $\pi : C \to S$ of fs log algebraic spaces such that every geometric fiber of $\pi$ is
a reduced and connected curve.
\end{definition}

In other words, a log curve $\pi : C \to S$ is a relative curve so that $\pi$ is locally of finite presentation on rings (from log smoothness) and on monoids (from the fs hypothesis), flat on rings (implied by log smoothness and integrality) and on monoids (in the sense of
integrality), and that satisfies a log version of the infinitesimal lifting criterion for smoothness.

F. Kato has shown that the underlying algebraic space of a log curve is nodal \cite[Theorem 1.3]{fkato_deformations}, and the log structure of a log curve is constrained in such a way that it encodes data very similar to that of a pointed nodal curve. We adapt his statement in \cite[Subsection 1.8]{fkato_deformations} for our purposes.

\begin{theorem} \label{thm:log_curve_local_structure}
Let $\pi : C \to S$ be a family of proper log curves. If $\ol{x} \in C$ is a geometric point with image $\ol{s} \in S$, let $Q = \ol{M}_{C, \ol{x}}$ and $P = \ol{M}_{S, \ol{s}}$. We may find connected strict \'etale neighborhoods
$V$ of $\ol{x}$ and $U$ of $\ol{s}$ fitting into a diagram
\[
  \xymatrix{
    C \ar[d]_{\pi} & V \ar[l]_{\text{\'et}} \ar[d] \ar[r]^{\text{\'et}} & V' \ar[r] \ar[d] & \Spec \Z[Q] \ar[d] \\
    S & U \ar[l]_{\text{\'et}} \ar@{=}[r] & U \ar[r] & \Spec \Z[P]
  }
\]
in which all horizontal arrows are strict and the labelled arrows are \'etale. There are three possibilities for the right square, depending on $Q$:
\begin{enumerate}
  \item (the smooth germ) $Q \cong P$, and the right square is isomorphic to
\[
   \xymatrix{
    \Spec \mathscr{O}_U[x] \ar[d] \ar[r] & \Spec \Z[P] \ar[d] \\
    U \ar[r] & \Spec \Z[P]
  }
\]
  where the right vertical arrow is induced by the identity map on $P$
  \item (the germ of a marked point) $Q \cong P \oplus \N$, and the right square is isomorphic to
   \[
   \xymatrix{
    \Spec \mathscr{O}_U[x] \ar[d] \ar[r] & \Spec \Z[P \oplus \N] \ar[d] \\
    U \ar[r] & \Spec \Z[P]
  }
  \]
  where the right vertical arrow is induced by the inclusion $P \to P \oplus \N$, and the top map sends the generator of $\N$ to $x$.
  \item (the node) There is an element $\delta_{\ol{x}} \in P$ mapping to an element $t$ of $\Gamma(U, \mathscr{O}_U)$, the monoid
   $Q$ is isomorphic to $\{ (p, p') \in P \oplus P \mid p - p' \in \Z\delta_{\ol{x}} \}$,
   and the right square is isomorphic to 
   \[
   \xymatrix{
     \Spec \mathscr{O}_U[x,y]/(xy - t)  \ar[d] \ar[r] & \Spec \Z[Q] \ar[d] \\
     U \ar[r] & \Spec \Z[P].
   }
   \]
 The right vertical arrow is induced by the diagonal map $P \to Q$, and
  the top horizontal arrow sends $(\delta_{\ol{x}}, 0)$ to $x$ and $(0, \delta_{\ol{x}})$ to $y$.
\end{enumerate}
\end{theorem}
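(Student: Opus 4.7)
The statement is essentially a repackaging of F. Kato's local structure theorem for log curves, and my plan is to follow his proof, making the diagrammatic form explicit. The overall strategy has two phases: first, use log smoothness and integrality to produce an \'etale-local chart with fs monoids realizing the stalks $P = \ol{M}_{S,\ol{s}}$ and $Q = \ol{M}_{C,\ol{x}}$; second, classify the possible monoid homomorphisms $P \to Q$ that can occur, given that $\pi$ is a log curve, and match each with the listed geometric model.

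For the charting step, I would apply Kato's criterion for log smoothness (\cite[Theorem 3.5]{kato_log_structures}), which says that \'etale locally around $\ol{x}$ and $\ol{s}$, there is a chart of $\pi$ by fine monoids $P \to Q$ with $P \to Q$ integral (guaranteed by the integrality of $\pi$) and so that the induced morphism from a neighborhood $V$ of $\ol{x}$ to $S \times_{\Spec \Z[P]} \Spec \Z[Q]$ is strict and \'etale. Using the fs hypothesis and the fact that $P, Q$ may be taken equal to the stalks of the characteristic sheaf at $\ol{s}, \ol{x}$, one gets exactly the left two squares of the asserted diagram; the rightmost square encodes the chart itself. After possibly shrinking $V$ and $U$, we may assume these neighborhoods are connected.

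The heart of the argument is the classification of $Q$. Since the geometric fibers of $\pi$ are one-dimensional and $P \to Q$ is integral and log smooth, the torsion-free group $Q^{gp}/P^{gp}$ has rank at most $1$, and a short case analysis using saturation gives three possibilities: $Q = P$; $Q = P \oplus \N$; or $Q$ realized as the sub-fs-monoid $\{(p,p') \in P \oplus P \mid p - p' \in \Z \delta_{\ol{x}}\}$ for some $\delta_{\ol{x}} \in P$. In the first two cases, the relative log dimension is $0$ but the ordinary relative dimension is $1$, so $\Spec \Z[Q]$ contributes a free coordinate $x$ (yielding $\Spec \mathscr{O}_U[x]$), and the generator of the extra $\N$-summand in case (ii) corresponds to that marked coordinate. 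In case (iii), $\Z[Q]$ is generated over $\Z[P]$ by $x = z^{(\delta_{\ol{x}},0)}$ and $y = z^{(0, \delta_{\ol{x}})}$ subject to $xy = z^{\delta_{\ol{x}}}$, and pulling $z^{\delta_{\ol{x}}}$ back to $U$ yields the function $t$; strictness and \'etaleness then give the required nodal model.

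The main obstacle is the classification in case (iii): showing that integrality of $P \to Q$, rather than a weaker flatness hypothesis, forces $Q$ into the asserted very specific form, and that the element $\delta_{\ol{x}} \in P$ really lies in $P$ (not merely in $P^{gp}$). This requires carefully exploiting Kato's criterion for integrality (\cite[Proposition 4.1]{kato_log_structures}) to pin down the push-out structure of $Q$ along $P \oplus \N^2$, combined with the fact that $Q$ is fs of rank one over $P$; once this is done, everything else is bookkeeping about \'etale charts and identifying which generators of $Q$ become the coordinates $x$ and $y$.
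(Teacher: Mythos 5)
Your route is genuinely different from the paper's. The paper does not re-derive the classification at all: its proof is essentially a citation of F.~Kato's structure theorem \cite[Subsection 1.8]{fkato_deformations}, and the actual work consists of two reductions needed to apply that statement as written --- first a limit argument to reduce to an affine Noetherian base (F.~Kato assumes locally Noetherian schemes), and then a spreading-out argument to pass from F.~Kato's description over $\Spec \mathscr{O}_{S,\ol{s}}$ to an honest \'etale neighborhood $U$ of $\ol{s}$. You instead propose to reprove the structure theorem from Kato's chart criterion for log smoothness together with the integrality hypothesis; this is a legitimate, more self-contained route, and it could sidestep the Noetherian reduction entirely if the chart criterion is invoked in its finite-presentation form. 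Be aware, however, that your plan concentrates all of the difficulty in the step you yourself flag as ``the main obstacle'': the case analysis showing that $Q$ must be $P$, $P \oplus \N$, or $\{(p,p') \mid p - p' \in \Z\delta_{\ol{x}}\}$ with $\delta_{\ol{x}}$ genuinely in $P$. That step is the entire content of F.~Kato's theorem; it uses not only integrality and saturation but also the hypothesis that the geometric fiber is a reduced, connected, one-dimensional scheme (needed to bound the rank of $Q^{gp}/P^{gp}$, to exclude non-reduced local models, and to separate the marked-point case from the node case), and it also requires the standard fact that an fs log structure admits a chart lifting the exact stalks $P = \ol{M}_{S,\ol{s}}$ and $Q = \ol{M}_{C,\ol{x}}$ (neatness, which follows from torsion-freeness of $\ol{M}^{gp}$ in the fs setting and which you gesture at but do not justify). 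If you carry out that classification in full you will have a complete and self-contained proof; as written, the proposal is an accurate outline of F.~Kato's argument rather than a proof, whereas the paper deliberately outsources the classification and spends its effort on the hypothesis-matching that your sketch does not address.
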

\begin{proof}
This follows from \cite[Subsection 1.8]{fkato_deformations} by standard arguments.
\end{proof}

We will say that a geometric point $\ol{x} \in C$ is \emph{smooth}, \emph{marked}, or a \emph{node} according to the kind
of neighborhood it has.
If $\ol{x}$ is marked, the stalk of $\ol{M}_C$ at $\ol{x}$ is naturally isomorphic over $\ol{M}_{S,\pi(\ol{x})}$ to $\ol{M}_{S,\pi(\ol{x})} \oplus \N$; given a global section $\sigma$ of $\ol{M}_C$, we define the \emph{slope of $\sigma$ at $\ol{x}$} to be the
value of $\sigma_x$ in the $\N$ coordinate. If $\ol{x}$ is nodal, the element $\delta_{\ol{x}}$ of $\ol{M}_{S,\ol{s}}$ is called the \emph{smoothing parameter} of the node $\ol{x}$. We note that the monoid $Q$ in the case of the germ of a node
may also be presented as $\N\alpha \oplus \N\beta \oplus P/(\alpha + \beta \sim \delta_{\ol{x}})$, with $\alpha$ mapping to $x$ and $\beta$ mapping to $y$.

\medskip

We now introduce tropicalizations of log curves using the framework of \cite{ccuw}. Intuitively, a tropical curve is a graph whose edges are labelled with ``lengths" coming from a monoid, along with some edges of infinite length which are attached to a vertex at only one end. More formally:

\begin{definition}
A \emph{tropical curve} $\Gamma$ \emph{with edge lengths in a sharp monoid} $P$ consists of:
\begin{enumerate}
  \item A finite set $X(\Gamma) = V(\Gamma) \sqcup F(\Gamma)$. The elements of $V(\Gamma)$ are called the \emph{vertices} of $\Gamma$ and the elements of $F(\Gamma)$ are called the \emph{flags} of $\Gamma$.
  \item A \emph{root map} $r_\Gamma : X(\Gamma) \to X(\Gamma)$ which is idempotent with image $V(\Gamma)$.
  \item An involution $\iota_\Gamma : X(\Gamma) \to X(\Gamma)$ whose fixed point set contains $V(\Gamma)$. The subsets $\{ f, \iota_\Gamma(f) \}$ of $F(\Gamma)$
  of size two are called \emph{edges}, and the set of all edges is denoted $E(\Gamma)$. The subsets $\{ f, \iota_\Gamma(f) \}$ of $F(\Gamma)$ of size one are called \emph{half-edges},
  and the set of all half-edges is denoted $H(\Gamma)$.
  \item A function $g : V(\Gamma) \to \N$. Given a vertex $v$, $g(v)$ is called the \emph{genus of $v$}.
  \item A function $\delta : E(\Gamma) \to P$. Given an edge $e$, $\delta(e)$ is called the \emph{length of $e$}.
\end{enumerate}
\end{definition}

We imagine that each flag $f$ is half of an edge starting at the vertex $r_\Gamma(f)$. Given an edge $e = \{ f, \iota_\Gamma(f) \}$, we say the vertices $r_\Gamma(f)$ and $r_\Gamma(\iota_\Gamma(f))$ are \emph{incident} to $e$. The \emph{genus} of a tropical curve $\Gamma$ is $g(\Gamma) = b_1(\Gamma) + \sum_{v \in V(\Gamma)} g(v)$,
where $b_1(\Gamma)$ is the first Betti number of $\Gamma$, that is, $|E(\Gamma)| - |V(\Gamma)| + n$ where $n$ is the number of connected components of $\Gamma$.

The tropicalization of a log curve is the dual graph of its underlying nodal curve, enriched with the data of the smoothing parameters from its log structure.

\begin{definition}
Given a log curve $\pi: C \to S$ where $S$ is a log point, the \emph{tropicalization} $\trop(C)$ of $C$ is the tropical curve with
edge lengths in $\Gamma(S, \ol{M}_{S})$ which has
\begin{enumerate}
  \item a vertex for each component of $C$;
  \item an edge for each node of $C$, incident to the components of $C$ which form the branches of the node;
  \item a half-edge for each marked point of $C$, rooted at the component of $C$ to which the marked point belongs
\end{enumerate}
and
\begin{enumerate}
  \item for each vertex $v$, the genus $g(v)$ is the genus of the normalization of the corresponding component of $C$;
  \item for each edge $e$, the length $\delta(e) \in \Gamma(S, \ol{M}_S)$ is the smoothing parameter of the node $e$.
\end{enumerate}
\end{definition}

We will usually use subscript notation for lengths, i.e. $\delta_e$. See Figure \ref{fig:dual_graph_example} for an example of tropicalization. Note that the vertices incident to an edge need not be distinct: consider the nodal cubic.

\begin{figure} 
\centering
  \includegraphics{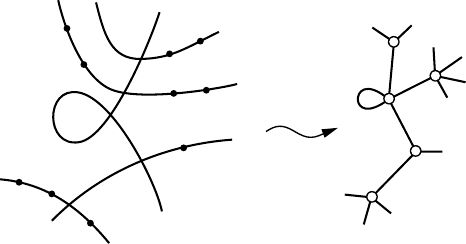}
  \caption{A typical log curve and its tropicalization.}
  \label{fig:dual_graph_example}
\end{figure}

\begin{definition} \label{def:msbar_order}
For any fs log scheme $S$, we give the monoid $\Gamma(S, \ol{M}_S)$ a partial order by taking $\Gamma(S, \ol{M}_S)$ as the positive cone,
i.e. $\alpha \geq \beta$ if and only if there is $\gamma \in \Gamma(S, \ol{M}_S)$ so that $\alpha = \beta + \gamma$.
\end{definition}

Let $\pi : C \to S$ be an arbitrary log curve and let $\ol{s} \in S$ be a geometric point. If $p$ and $q$ are nodes of $C_{\ol{s}}$ so that $\delta_p \geq \delta_q$, then the resulting relation $\delta_p = \gamma + \delta_q$ in $\ol{M}_S$ can be lifted \'etale locally to $M_S$
and pushed to $\mathscr{O}_S$. It follows that $p$ and $q$ have \'etale neighborhoods \'etale locally of the form $x_py_p = t_p$ and $x_qy_q = t_q$ where $t_p = st_q$ for some $s \in \mathscr{O}_S$. Then where $p$ smoothes, $q$
also smoothes. That is, $\delta_q \leq \delta_p$ means ``$q$ smoothes before $p$.''

\subsection{Line bundles from the log structure}

Given a scheme $X$ and an \'etale $X$-scheme $U$, denote by $\Div_X(U)$ the monoid of isomorphism classes of pairs $(\mathscr{L}, s)$ where
$\mathscr{L}$ is an invertible sheaf on $U$, $s \in \mathscr{L}(U)$, and the operation is given by tensor product.

If $X$ is an integral log scheme, then associated to each section of $\Gamma(U, \ol{M}_X)$ is an element of $\Div_X(U)$, whose construction we will
give in a moment. This construction defines a morphism of monoids from $\Gamma(U, \ol{M}_X)$ to $\Div_X(U)$.
Under sufficiently nice hypotheses, such as integrality, the specification of such a morphism for each $U$ is equivalent to the specification
of a log structure \cite{borne_vistoli}. An integral log scheme $X$ may therefore be thought of as a scheme with a distinguished family of
generalized Cartier divisors parametrized by $\ol{M}_X$, a point of view which will be important for our work.

\begin{definition} \hfill
\begin{enumerate}
  \item (Associated line bundles)
Let $(X, \epsilon : M_X \to X)$ be an integral log scheme, let $U \to X$ be an \'etale map, and let $\sigma \in \Gamma(U, \ol{M}_X)$.
Since the log structure on $X$ is integral, the action of $\mathscr{O}_X^*$ on $M_X$ is free, so the sheaf of lifts of $\sigma$ to $M_U$ is an
$\mathscr{O}_U^*$-torsor, $T(\sigma)$. We define $\mathscr{O}_U(\sigma)$, the \emph{invertible sheaf associated to $\sigma$}, to be the dual of the invertible
sheaf associated to $T(\sigma)$.

If $\sigma, \tau \in \Gamma(U, \ol{M}_X)$,
the \emph{invertible sheaf associated to $\sigma - \tau \in \Gamma(U, \ol{M}_X^{gp}$)} is given by
\[
  \mathscr{O}_U(\sigma - \tau) = \mathscr{O}_U(\sigma) \otimes \mathscr{O}_U(\tau)^\vee.
\]

  \item (Associated maps)
Now suppose $\tau \in \Gamma(U, \ol{M}_X^{gp})$ and $\sigma \in \Gamma(U, \ol{M}_X)$. Restricting the map $\epsilon : M_X|_U \to \mathscr{O}_U$ gives an $\mathscr{O}^*_U$-equivariant map $T(\sigma) \to \mathscr{O}_U$. Taking the associated map of invertible sheaves,
we have a cosection
\[
   \sigma : \mathscr{O}_U(\sigma)^\vee \to \mathscr{O}_U.
\]
Twisting by $\mathscr{O}_U(\sigma + \tau)$ gives an induced map of invertible sheaves
\[
  \sigma : \mathscr{O}_U(\tau) \to \mathscr{O}_U(\sigma + \tau), 
\]
which we denote by $\sigma$.
In the case that $\tau = 0$, the resulting morphism
\[
  \sigma : \mathscr{O}_U \to \mathscr{O}_U(\sigma)
\]
is called the \emph{canonical section of $\mathscr{O}_U(\sigma)$},
\end{enumerate}
\end{definition}

The induced morphism $\Gamma(U, \ol{M}_X) \to \Div_X(U)$ is functorial in $X$ in the sense that for any morphism $f : Y \to X$ of log schemes,
the diagram
\[
\xymatrix{
  \Gamma(U, \ol{M}_X) \ar[r] \ar[d]_{f^*} & \Div_X(U) \ar[d]^{f^*} \\
  \Gamma(f^{-1}(U), \ol{M}_Y) \ar[r] & \Div_Y(U) 
}
\]
commutes.

It can be helpful to introduce coordinates. Since Zariski and \'etale
$\mathscr{O}_U^*$-torsors coincide, there exists a Zariski cover $\{ U_i \to U \}_{i \in I}$ of $U$ and sections $\sigma_i \in \Gamma(U_i, M_X)$ so that
$\ol{\sigma_i} = \sigma|_{U_i}$. Since the log structure is integral, for each $i,j$ there is a unique $\xi_{i,j} \in \mathscr{O}_X^*(U_i \cap U_j)$ so
that $\sigma_i|_{U_i \cap U_j} = \xi_{i,j} \cdot \sigma_j|_{U_i \cap U_j}$. Since $\epsilon$ is the identity on $\mathscr{O}^*$, it follows that
$\epsilon(\sigma_i)$ and $\epsilon(\sigma_j)$ differ by $\xi_{i,j}$ on $U_i \cap U_j$. In the case that $\epsilon(\sigma_i)$ as a non--zero divisor
for each $i$, it follows that $(\epsilon(\sigma_i))_{i \in I}$ defines an effective Cartier divisor on $X$. A different choice of $\sigma_i$s gives a
rationally equivalent Cartier divisor. Whether $(\epsilon(\sigma_i))_{i \in I}$ defines a Cartier divisor or not, the tuple $(\xi_{i,j})$ automatically
satisfies the cocycle condition, and therefore it gives us gluing data for the line bundle $\mathscr{O}_U(\sigma)$.
Explicitly,
\[
  \mathscr{O}_U(\sigma)(V) = \left\{ (f_i)_{i \in I} \in \prod_{i \in I} \mathscr{O}_X(U_i \cap V)  \, \middle| \, f_i|_{U_i \cap U_j \cap V} = \xi_{i,j}|_V \cdot f_j|_{U_i \cap U_j \cap V} \text{ for all }i,j \in I \right\}.
\]
We will abbreviate this formula in the future to the more mnemonic, if notationally abusive,
\[
  \mathscr{O}_U(\sigma) = \left\{ (f_i) \, \middle| \, f_i = \epsilon\left(\frac{\sigma_i}{\sigma_j}\right) f_j \right\}.
\]
With respect to this presentation of $\mathscr{O}_U(\sigma)$, the canonical section is $(\epsilon(\sigma_i))$.
When $(\epsilon(\sigma_i))_{i \in I}$ represents a Cartier divisor $D$, the line bundle $\mathscr{O}_U(\sigma)$ is the same as $\mathscr{O}_U(D)$,
and the canonical sections of each agree.

Similarly, if $\sigma, \tau \in \Gamma(U, \ol{M}_X)$, the line bundle associated to $\sigma - \tau \in \Gamma(U, \ol{M}_X^{gp})$ is given by
\[
  \mathscr{O}_U(\sigma - \tau) = \left\{ (f_i) \, \middle| \, f_i = \epsilon\left(\frac{\sigma_i}{\sigma_j}\cdot \frac{\tau_j}{\tau_i}\right) f_j \right\}.
\]
If $\sigma \in \Gamma(U, \ol{M}_X^{gp})$ and $\tau \in \Gamma(U, \ol{M}_X)$, the induced morphism
$\mathscr{O}_U(\sigma) \overset{\tau}{\to} \mathscr{O}_U(\sigma + \tau)$, is given by $(f_i) \mapsto (\epsilon(\tau_i) f_i).$ 

\begin{definition}
Suppose $X$ is an integral log scheme. If $\sigma \in \Gamma(X, \ol{M}_X)$, the image of the map $\mathscr{O}_X(-\sigma) \overset{\sigma}{\to} \mathscr{O}_X$ is an ideal sheaf for a closed subscheme $|\sigma|$ of $X$, which we call the \emph{support} of $\sigma$.
Moreover, we have an exact sequence
\[
  \mathscr{O}_X(-\sigma) \overset{\sigma}{\to} \mathscr{O}_X \to \mathscr{O}_{|\sigma|} \to 0.
\]
\end{definition}

If $(\epsilon(\sigma_i))$ is a Cartier divisor, then $|\sigma|$ agrees with the support of the Cartier divisor. In this way, each section of the characteristic sheaf of $X$ determines a subscheme which behaves like an effective Cartier divisor. In the case of a log curve, these sections of
the characteristic sheaf and their associated line bundles have particularly nice interpretations in terms of piecewise linear functions, which we now define.

\begin{definition}
A \emph{piecewise linear function} on a tropical curve $\Gamma$ consists of the data
\begin{enumerate}
  \item For each $v \in V(\Gamma)$, an element $f_v \in \Gamma(S, \ol{M}_S)$;
  \item For each $h \in H(\Gamma)$, a natural number $n_h \in \mathbb{N}$,
\end{enumerate}
such that whenever $v,w$ are a pair of vertices incident to a common edge, $e$, $f_v - f_w$ is an integer multiple of $\delta_e$. The set of all piecewise linear functions on $\Gamma$ will be denoted $PL(\Gamma)$.
\end{definition}

The following description of the sections of the characteristic sheaf of a logarithmic curve slightly generalizes \cite[Remark 7.3]{ccuw}, though the idea seems to be implicit in the literature. We provide a somewhat heavy proof in the appendix in terms of ``uniform sets of charts."

\begin{proposition} \label{prop:smoothed_pw_linear} Let $\pi: C \to S$ be a log curve, and let $\ol{s}$ be a geometric point of $S$. Let $\Gamma$ be the tropicalization of the fiber
$C_{\ol{s}}$ of $C$ over $\ol{s}$. Then there is an \'etale neighborhood $U$ of $\ol{s}$ and a bijection
\begin{align*}
  \Gamma(\pi^{-1}(U), \ol{M}_C) &\overset{\sim}{\longrightarrow} PL(\Gamma) \\
   \sigma & \mapsto PL(\sigma).
\end{align*}

If $\sigma \in \Gamma(\pi^{-1}(U), \ol{M}_C)$, the corresponding element $PL(\sigma) = ((f_v), (n_h))$ of $PL(\Gamma)$ is determined by taking $f_v$ to be the stalk of $\sigma$ at the generic point of $v$ for all $v \in V(\Gamma)$, and taking $n_h$ to be the slope of
$\sigma$ at $h$ for all $h \in H(\Gamma)$.

If $\pi : C \to S$ is a log curve over the spectrum of an algebraically closed field and $\sigma \in PL(\Gamma)$, then $|\sigma|^{red}$ is the union of the marked points at which $\sigma$ has non-zero slope and the components of $C$
where $\sigma$ has non-zero value.
\end{proposition}

Finally, we recall a description of associated line bundles for log curves over geometric points.

\begin{proposition} \label{prop:log_curve_line_bundles}
(\cite[Proposition 2.4.1]{rsw})
Let $\pi : C \to S$ be a log curve over $S$, where the underlying scheme of $S$ is the spectrum of an algebraically closed field. Let $\sigma$ be a global section of $\ol{M}_C$ with corresponding piecewise linear function $( (f_v), (n_h) )$.
Let $v$ be a component of $C$. Then
\[
  \mathscr{O}_C(\sigma)|_{v} = \mathscr{O}_{v}\left( \sum_p \mu_p p \right) \otimes \pi^*\mathscr{O}_S(f_v)
\]
where the sum is over the edges and half-edges $p$ incident to $v$, and $\mu_p$ is the ``outgoing slope" of the piecewise linear function at the point $p$: when $p$ is a marked point, this is the integer $n_p$; when $p$ is a node joining $v$ to $w$,
this is $(f_w - f_v)/\delta_p$.
\end{proposition}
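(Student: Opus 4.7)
My approach is to twist by $\pi^*\mathscr{O}_S(f_v)^{-1}$ to reduce to the case where the section vanishes at the generic point of $v$, then compute locally in the charts of Theorem \ref{thm:log_curve_local_structure} and glue. By functoriality of associated line bundles (in particular, the map $\Gamma(C, \ol{M}_C^{gp}) \to \Pic(C)$ is a homomorphism, and it commutes with pullback), there is a canonical isomorphism
\[
  \mathscr{O}_C(\sigma) \cong \mathscr{O}_C(\sigma') \otimes \pi^*\mathscr{O}_S(f_v),
\]
where $\sigma' := \sigma - \pi^*f_v \in \Gamma(C, \ol{M}_C^{gp})$ and $\pi^*f_v$ is the constant global section of $\pi^{-1}\ol{M}_S$. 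From the stalk descriptions recalled in the excerpt, $\sigma'$ has stalk $0$ at every smooth unmarked point of $v$, so $\sigma'$ admits the canonical trivial lift $1 \in M_C^*$ on the complement of the markings and nodes of $v$; in particular $\mathscr{O}_C(\sigma')|_v$ is canonically trivial there. It therefore suffices to identify the local contribution at each special point of $v$.

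At a marked point $h$ on $v$, the chart gives $\ol{M}_C = P \oplus \N\gamma_h$ with $\gamma_h \mapsto x_h$; the stalk $n_h\gamma_h$ of $\sigma'$ lifts to $x_h^{n_h}$, and the transition to the trivial lift on an overlapping smooth chart is $x_h^{n_h}$, which is precisely the transition function of $\mathscr{O}_v(n_h h)$. At a node $p$ joining $v = \{x = 0\}$ to another component, Theorem \ref{thm:log_curve_local_structure} gives $\ol{M}_C = Q$ with generators $\alpha, \beta$ satisfying $\alpha + \beta = \delta_p$, $\alpha \mapsto x$, $\beta \mapsto y$; the stalk $(0, \mu_p\delta_p) = \mu_p\beta \in Q^{gp}$ of $\sigma'$ lifts to $y^{\mu_p} \in M_C^{gp}$, yielding transition $y^{\mu_p}$ with a nearby smooth chart (on which $y$ is a unit). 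Since $y|_v$ is a local parameter at $p$ on $v$, this matches the transition of $\mathscr{O}_v(\mu_p p)$. Gluing the local trivializations on their overlaps (where all choices agree with the trivial one on each smooth neighborhood) assembles to $\mathscr{O}_C(\sigma')|_v \cong \mathscr{O}_v(\sum_p \mu_p p)$, which upon twisting gives the claimed formula.

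The main technical obstacle is the node case when $\mu_p < 0$: then $y^{\mu_p}$ lies in $M_C^{gp} \setminus M_C$, so one cannot interpret $\mathscr{O}_C(\sigma')$ locally as the ideal sheaf of an effective Cartier divisor, and must instead work directly with the $\mathscr{O}^*$-torsor definition of the associated line bundle inside $\ol{M}_C^{gp}$. The transition computation nevertheless goes through uniformly in the sign of $\mu_p$, since the ratio of any two lifts of the same section of $\ol{M}_C^{gp}$ always lies in $\mathscr{O}_C^*$. An alternative lift $x^{-\mu_p}$ is also available via the relation $\mu_p\beta = -\mu_p\alpha + \mu_p\delta_p$; checking that it yields the same line bundle on $v$ reduces to the identity $\alpha + \beta = \delta_p$ in $Q$ together with the fact that $\delta_p$ is pulled back from $\ol{M}_S$, so that any discrepancy is absorbed into the $\pi^*\mathscr{O}_S(f_v)$ factor.
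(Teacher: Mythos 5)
The paper does not prove this proposition; it is quoted from \cite[Proposition 2.4.1]{rsw}, so there is no in-text argument to compare against. Your proof is correct and is essentially the standard one: subtracting the pullback of $f_v$ uses the facts, recorded in Section 2, that $\sigma \mapsto \mathscr{O}_C(\sigma)$ is a monoid homomorphism extending to $\ol{M}_C^{gp}$ and is functorial (so $\mathscr{O}_C(\pi^\flat f_v) \cong \pi^*\mathscr{O}_S(f_v)$); the stalk computations after Theorem \ref{thm:log_curve_local_structure} confirm that $\sigma' = \sigma - \pi^\flat f_v$ vanishes on the smooth unmarked locus of $v$, that its stalk at a marking is $n_h\gamma_h$ with lift $x_h^{n_h}$, and that its stalk at a node is $\mu_p\beta$ with lift $y^{\mu_p}$, where $y$ restricts to a local parameter on the branch $v = \{x=0\}$; and since different lifts of a fixed section of $\ol{M}_C^{gp}$ differ by units (integrality), the resulting transition data, and hence the identification with $\mathscr{O}_v(\sum_p \mu_p p)$, is independent of the sign of $\mu_p$. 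You are also right to flag that for $\mu_p < 0$ one must work with the torsor of lifts in $M_C^{gp}$ rather than with an effective Cartier divisor.

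One small imprecision in your closing remark: the discrepancy between the lifts $y^{\mu_p}$ and $\epsilon(\delta_p)^{\mu_p}x^{-\mu_p}$ is a twist by $\pi^*\mathscr{O}_S(\mu_p\delta_p)$, which is trivial here because every line bundle on $S = \Spec k$ is trivial --- it is not literally ``absorbed into the $\pi^*\mathscr{O}_S(f_v)$ factor.'' (Also note that $x^{-\mu_p}$ does not restrict meaningfully to $v = \{x = 0\}$, so the first lift is the one you actually need.) This is a side remark and does not affect the main argument.
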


For example, in Figure \ref{fig:mesa_example}, $\mathscr{O}_C(\ol{\lambda})$ restricts on each component from left to right to $\mathscr{O}(p_1)$, $\mathscr{O}(-p_1)$, $\mathscr{O}(-p_3)$, $\mathscr{O}(p_3 - p_4 - p_5)$, $\mathscr{O}(p_4)$, and $\mathscr{O}(p_5)$.

\begin{figure}
\begin{center}
\includegraphics[width=\textwidth]{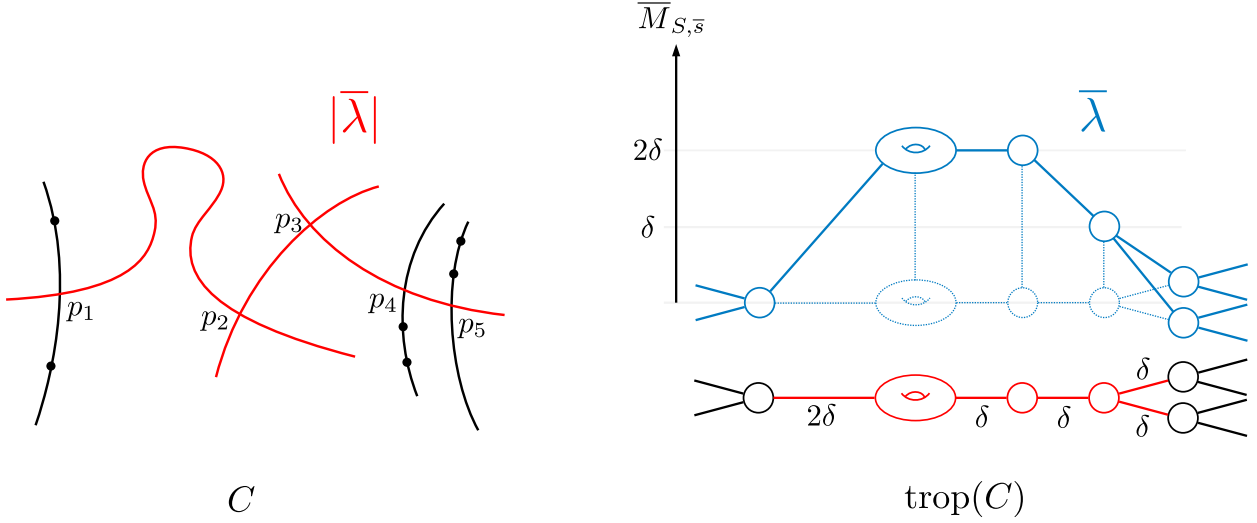}
\end{center}
\caption{A log curve $C$, a section $\ol{\lambda}$ of its characteristic sheaf, and the support $|\ol{\lambda}|$ of that section.}
\label{fig:mesa_example}
\end{figure}

\section{Mesa curves}
\label{sec:mesa_curves}

Let $C$ be a log smooth curve over the spectrum of an algebraically closed field. By a subcurve of $C$, we mean a union of some of its irreducible components. If $E$ is a connected subcurve of $C$ with positive genus, the \emph{core of $E$} is the minimal connected subcurve $F$ of $E$ with the same arithmetic genus as $E$.

\begin{definition}
Let $\Gamma$ be a tropical curve. A \emph{path} $W$ in $\Gamma$ is a sequence $v_0e_1v_1e_2 \cdots e_kv_k$ of vertices and edges in $\Gamma$ so that the vertices $v_i$ are distinct and $v_{i - 1}$ and $v_i$ are the ends of the edge $e_i$ for all $i$. Given subsets $A$ and $B$
of $V(\Gamma)$, we say that $W$ is a path from $A$ to $B$ if $v_0 \in A$, $v_k \in B$, and $v_i \not\in A \cup B$ for $i \neq 0, k$.
\end{definition}

\begin{definition} \label{def:simple_mesa}
A section $\ol{\lambda} \in \Gamma(C, \ol{M_C})$ is a \emph{simple mesa} if
\begin{enumerate}
  \item $PL(\ol{\lambda})$ has slope 0 on the half-edges of $\Gamma(C)$
  \item the support $E$ of $\ol{\lambda}$ is either empty or connected with positive genus;
  \item $PL(\ol{\lambda})$ is constant on $\core(E)$;
  \item each component of $E$ lies on a path from $\core(E)$ to the complement of $E$;
  \item the restriction of $PL(\ol{\lambda})$ to any path from $\core(E)$ to the complement of $E$ is weakly decreasing
          with slopes $0$ or $-1$;
  \item for each rational component $Z$ of $E$, $\mathscr{O}_Z(-\ol{\lambda})$ has non-negative degree;
  \item $H^1(E, \mathscr{O}_E(-\ol{\lambda})) = 0$.
\end{enumerate}
\end{definition}
See Figure \ref{fig:mesa_example} for an example of a simple mesa. Note that with respect to the partial order of Definition \ref{def:msbar_order} on $\ol{M}_S$, $PL(\ol{\lambda})$ has a maximum value and it is acheived on the core of $E$. We call this maximum the \emph{elevation} of $\ol{\lambda}$, and we call the union of the components of $E$
where $PL(\ol{\lambda})$ acheives this maximum the \emph{top} of $\ol{\lambda}.$ Note that the top of $\ol{\lambda}$ contains the core of $E$,
possibly properly. A path from $\core(E)$ to the complement of $E$ is \emph{steep} if its slopes are all $-1$. A mesa is \emph{steep}
if all such paths are steep. We call the $H^1(E, \mathscr{O}_E(-\ol{\lambda})) = 0$ condition \emph{acyclicity}.

A mesa curve extends these notions to families.

\begin{definition} \label{def:mesa_curve}
A \emph{mesa curve} over a scheme $S$ consists of
\begin{enumerate}
  \item a proper log curve $\pi : C \to S$,
  \item a section $\ol{\lambda} \in \Gamma(C, \ol{M}_C)$,
\end{enumerate}
such that, for each geometric point $x$ of $S$, the pullback of $\ol{\lambda}$ to $C \times_S x$ is of the form $\sum_{i = 1}^{k} \ol{\lambda}_{i}$ for some $k \geq 0$, where the $\ol{\lambda}_{i}$ are simple mesas with disjoint support.

Given a mesa curve $(\pi : C \to S, \ol{\lambda})$ we say $\ol{\lambda}$ is a mesa and $E = |\ol{\lambda}|$, extending our conventions over geometric points. If the pullback of $E$ to each geometric fiber of $C$ has at most one connected component, we say that $C$ is a \emph{simple} mesa curve.
\end{definition}

Mesa curves are a generalization of the centrally aligned curves of \cite{rsw}. For comparison, we recall the definition of centrally aligned curve.
Let $\pi : C \to S$ be a log curve of genus one.
There is a global section $\lambda \in \Gamma(C, \ol{M}_C)$ that measures distance from the core of
$C$ in each fiber \cite[Section 3.3]{rsw}. Then a centrally aligned curve consists of (c.f. \cite[Definition 4.6.2.1]{rsw})
\begin{enumerate}
  \item a proper genus 1 log curve $\pi : C \to S$ and
  \item a section $\rho \in \Gamma(S, \ol{M}_S)$
\end{enumerate}
such that for each geometric point $\ol{s}$ of $S$,
\begin{enumerate}
  \item $\rho_{\ol{s}}$ is comparable to $\lambda(v)$ for each vertex $v \in \Gamma(C_{\ol{s}})$,
  \item the subcurve $E_{\ol{s}}$ of $C_{\ol{s}}$ where $\lambda < \rho$ is stable, and
  \item given any pair of vertices $v, w \in V(\trop(E_{\ol{s}}))$, the distances $\lambda(v)$ and $\lambda(w)$ are comparable in the partial
  order on $\ol{M}_S$.
\end{enumerate}

We may interpret a centrally aligned curve $(C, \rho)$ as a simple mesa curve by taking $\ol{\lambda} = \max \{ \rho - \lambda, 0 \}$.
The resulting mesa is necessarily steep, and Lemma \ref{lem:acyclicity_criterion} assures us that it is acyclic. That is,
to move from centrally aligned curves to mesa curves
we trade the datum $\rho \in \Gamma(S, \ol{M}_S)$ for the more flexible datum $\ol{\lambda} \in \Gamma(C, \ol{M}_C)$.

\medskip

When $C$ is a simple mesa curve, there is only one elevation $\rho$ to worry about in each fiber. In fact, these $\rho$s glue together to a global section of $\ol{M}_S$.

\begin{lemma} \label{lem:rho_is_global}
Let $\pi : C \to S$ be a simple mesa curve, and for each geometric point $\ol{s}$ of $S$, let $\rho_{\ol{s}}$ be the elevation of the restriction of 
$\ol{\lambda}$ to the fiber over $\ol{s}$.
Then there is a global section $\rho \in \ol{M}_S(S)$ restricting to $\rho_{\ol{s}}$ for each geometric point $\ol{s}$ of $S$.
\end{lemma}
\begin{proof}
For each geometric point $\ol{s}$, construct an \'etale neighborhood $U_{\ol{s}}$ of $\ol{s}$ admitting a uniform set of charts (Definition \ref{def:unif_charts}). Then following the isomorphisms
$\ol{M}_{S, \ol{s}} \cong P \cong \Gamma(U_{\ol{s}}, \ol{M}_S)$ gives a unique extension of $\rho_{\ol{s}} \in \ol{M}_{S,\ol{s}}$ to a section $\rho^{\ol{s}} \in \Gamma(U_{\ol{s}}, \ol{M}_S)$. Following the values of $\rho$ on the charts $V_i$, at each geometric point $\ol{t}$ in $U^{\ol{s}}$, the stalk of $\rho^{\ol{s}}$ at $\ol{t}$ agrees with $\rho_{\ol{t}}$. Then the sections $\rho^{\ol{s}}$ glue to the required global section.
\end{proof}

If $\pi : C \to S$ is a simple mesa curve and $\ol{s}$ is a geometric point of $S$, we note that $\rho_{\ol{s}} = 0$ if and only if the fiber of $E$ over $\ol{s}$ is empty. Note that then $\rho : \mathscr{O}_{\ol{s}}(-\rho) \to \mathscr{O}_{\ol{s}}$
is an isomorphism if and only if $\rho_{\ol{s}} = 0$, and $\mathscr{O}_{\ol{s}}(-\rho) \to \mathscr{O}_{\ol{s}}$ is the zero map otherwise. In other words, the support of $\rho$ is the locus in $S$ over which $E$ is non-empty.

\section{Construction of the contraction map} \label{sec:contraction_construction}

\subsection{Intuition and strategy}

Start by assuming that $C$ is simple. A tempting way forward is to construct $\ol{C}$ as a topological space by collapsing $E$ to a point fiberwise, then to take $\mathscr{O}_{\ol{C}} = \tau_*\mathscr{O}_C$. In general, this fails to produce a curve with singularities of the correct genus.

\begin{example}
Consider a stable curve $C \to \Spec k$ consisting of a smooth elliptic component, $E$, with two attached rational components. Give the base the log structure associated to the chart $\N\delta \to k$ sending $\delta \mapsto 0$, and give $C$ a log
structure with both smoothing parameters equal to $\delta$. Setting $\ol{\lambda}$ equal to $\delta$ on $E$ and 0 on the rational
components defines a mesa. If we let $\tau : C \to \ol{C}$ be the quotient map of topological spaces collapsing $E$ to a point,
then $(\ol{C}, \tau_*\mathscr{O}_C)$ is a genus 0 curve with two components meeting at an ordinary double point.
\end{example}

\begin{example}
Let $S = \Spec k[t]/(t^2)$ and give $S$ the log structure associated to the chart $\N\delta \to k[t]/(t^2)$ sending $\delta \mapsto t$.
Let $C$ be a mesa curve over $S$ restricting to the curve of the previous example, in which the nodes are beginning to smooth out. Let $\mathscr{O}_{C_0} = \mathscr{O}_C \otimes k$.

Consider the sequence
\[
  0 \to \mathscr{O}_{C_0} \overset{t}{\to} \mathscr{O}_C \to \mathscr{O}_{C_0} \to 0,
\]
and its pushforward,
\[
  0 \to \tau_*\mathscr{O}_{C_0} \to \tau_*\mathscr{O}_C \to \tau_*\mathscr{O}_{C_0}.
\]
where $\tau : C \to \ol{C}$ is the topological quotient.

We would like to compute $(\tau_*\mathscr{O}_C) \otimes k$. This is the same as the image of $\tau_*\mathscr{O}_C$ in
$\tau_*\mathscr{O}_{C_0}$.
Let $E$ be the elliptic component and let $Z_1$, $Z_2$ be the rational components. Let $p_1, p_2$ be the respective points where $E$ meets
$Z_1$ and $Z_2$. Let $U = C - (Z_1 \cup Z_2)$. Since $\tau$ is an isomorphism in the complement of $E$, we may restrict our attention to
the geometric stalk of the sequence $0 \to \tau_*\mathscr{O}_{C_0} \to \tau_*\mathscr{O}_C \to \tau_*\mathscr{O}_{C_0}$ at $\ol{\tau(E)}$.
Let $f \in (\tau_*\mathscr{O}_{C_0})_{\ol{\tau(E)}}$. Since $U$ is affine, finding lifts of $f$ to $\tau_*\mathscr{O}_C$ is equivalent to
finding compatible lifts of the restriction $f|_U$ to $\mathscr{O}_C(U)$ and of the stalks $f_{\ol{p_i}}$ to $\mathscr{O}_{C, \ol{p_i}}$.

There are units $\alpha_i$ so that the \'etale stalk $(\mathscr{O}_C)_{\ol{p_i}}$ is isomorphic to $(\mathscr{O}_{S}[x_i, y_i]/(x_iy_i - \alpha_it))_{\ol{(x_i, y_i)}}$, where $x_i$
vanishes on $Z_i$ and $y_i$ vanishes on $E$.
A lift of $f_{\ol{p_i}}$ is of the form
\[
  f_{\ol{p_i}}(x_i, y_i) + tg_{\ol{p_i}}(x_i, y_i).
\]
On $U$, $\mathscr{O}_C(U)  = \mathscr{O}_{C_0}(U)[t]/(t^2)$,
so a lift of $f|_U$ is of the form
\[
  f|_U + tg_U.
\]
Write $f_{\ol{p_i}} = a^i_{0,0} + a^i_{1,0}x_i + a^i_{2, 0}x_i^2 + \cdots + a^i_{0,1}y_i + a^i_{0,2}y_i^2 + \cdots$.  Consider the restriction
of $f_{\ol{p_i}}(x_i, y_i) + tg_{\ol{p_i}}(x_i, y_i)$ to the complement of $Z_i$. Then $x_i$ is invertible in the complement of $Z_i$, so we have
\[
  y_i = \alpha_i t/x_i
\]
on this locus. The restriction of $f_{\ol{p_i}}(x_i, y_i) + tg_{\ol{p_i}}(x_i, y_i)$ to the complement of $Z_i$ may then be written
\[
  a^i_{0,0} + a^i_{1,0}x_i + \cdots + t(\alpha_ia^i_{0,1}x_i^{-1} + g_{\ol{p_i}}(x, 0))
\]
Therefore the problem of finding compatible lifts of the $f$s is the same as that of finding a rational function $g$ on $E \otimes k$ with
pole $\alpha_1 a^1_{1,0}$ at $p_1$ and pole $\alpha_2 a^2_{1,0}$ at $p_2$. This is the Mittag-Leffler problem; it is classical that a solution exists if and only if
$c_1\alpha_1a^1_{1,0} + c_2\alpha_2a^2_{1,0} = 0$, for some nonzero constants $c_1, c_2$. That is, $(\ol{C}, (\tau_*\mathscr{O}_C) \otimes k)$ has a tacnode at
$\tau(E)$. This is a genus 1 singularity, as desired.
\end{example}

These examples suggest that the issue is the failure of $\tau_*$ to commute with base change in $S$ when applied to $\mathscr{O}_C$, so our strategy will be to find
constituent parts of the desired structure sheaf whose formation commutes with base change in $S$. Let $\pi : C \to S = \Spec A$ be a simple mesa curve, let $E$ be the support of $\ol{\lambda}$, and let $\rho$ be the global section of $\ol{M}_S$ in Lemma \ref{lem:rho_is_global}.

When $\mathscr{O}_C(-\ol{\lambda}) \to \mathscr{O}_C$ is injective, $\mathscr{O}_C(-\ol{\lambda})$ is the ideal sheaf of $E$ in $C$. It is reasonable
to expect that $\tau_*\mathscr{O}_C(-\ol{\lambda})$ will be the ideal sheaf of the point $\tau(E)$ in $\ol{C}$. If $U$ is a neighborhood of
$E$ small enough that $\tau(U)$ is an affine neighborhood of $\tau(E)$, this ideal sheaf will have values
$\Gamma(U, \mathscr{O}_C(-\ol{\lambda}))$. We will be able to conclude from acylicity of $\ol{\lambda}$ that the formation of this module
commutes with base change in $S$.
This will be the first constituent of $\mathscr{O}_{\ol{C}}$. 

Since $\Gamma(U, \mathscr{O}_C(-\ol{\lambda}))$ is supposed to be the ideal sheaf of a point, the rest of $\mathscr{O}_{\ol{C}}$ should be generated by the constant functions, i.e. $\ol{\pi}^{-1}\mathscr{O}_S$.
On a sufficiently small neighborhood of $\tau(E)$, the sections of this sheaf should be identifiable with $\Gamma(S, \mathscr{O}_S)$. This will be the second constituent of $\mathscr{O}_{\ol{C}}$.

At $\tau(E)$, we expect the constant functions and the ideal sheaf of $\tau(E)$ to intersect in the constant functions that are zero on $E$, i.e.,
$\Gamma(S, \mathscr{O}_S(-\rho))$.

\medskip

We now turn this intuition into the construction of a suitable ring. Let $U$ be any open subset of $C$ containing $E$. (We will be more particular about $U$'s identity later, but we leave it variable here so that we will have the language to compare different choices of $U$.)

There is a non-unital $\mathscr{O}_C$-bilinear multiplication on $\mathscr{O}_C(-\ol{\lambda})$ induced by the log structure on $C$:
\[
  \mathscr{O}_C(-\ol{\lambda}) \otimes_{\mathscr{O}_C} \mathscr{O}_C(-\ol{\lambda}) \cong \mathscr{O}_C(-2\ol{\lambda}) \overset{\ol{\lambda}}{\to} \mathscr{O}_C(-\ol{\lambda}).
\]

It may be helpful to be more explicit: recall that the sections of $\mathscr{O}_C(-\ol{\lambda})$ on $U$ can be presented as
\[
  \Gamma(U, \mathscr{O}_C(-\ol{\lambda})) = \left\{ (f_i) \,\middle|\, f_i = \epsilon\left( \frac{\ol{\lambda}_j}{\ol{\lambda}_i}\right) f_j \right\}.
\]
In these coordinates, the multiplication is $(f_i) \cdot (g_i) = (\epsilon(\ol{\lambda}_i)f_ig_i).$
Given sections $f, g \in \mathscr{O}_C(-\ol{\lambda})$, we will denote this product by $\ol{\lambda}(fg)$. Note that $\ol{\lambda}(fg) = \ol{\lambda}(f)g = f\ol{\lambda}(g)$, rather than $\ol{\lambda}(f)\ol{\lambda}(g)$.

Let $B(U) = \Gamma(U, \mathscr{O}_C(-\ol{\lambda})) \oplus \Gamma(S, \mathscr{O}_S)$. Since $\mathscr{O}_C(-\ol{\lambda})$ is a $\pi^{-1}\mathscr{O}_S$ module, we can
give $B(U)$ a ring structure by the rule
\[
  (f,c) \cdot (g, d) = (\ol{\lambda}(fg) + df + cg, cd).
\]
Now give $\Gamma(S, \mathscr{O}_S(-\rho))$ a $B(U)$-module structure by $(f, c) \cdot g = cg$. Consider the map
\begin{align*}
  \Gamma(S, \mathscr{O}_S(-\rho)) \, &\longrightarrow \, B(U) \\
  g \quad &\longmapsto \, ((\rho - \ol{\lambda})(g), -\rho(g)).
\end{align*}
Note that the minus sign in $(\rho - \ol{\lambda})(g)$ is in $\ol{M}_C$ while the minus sign in $-\rho(g)$ in $B(U)$.
This map is a $B(U)$-module homomorphism:
\begin{align*}
  (f, c) \cdot ((\rho - \ol{\lambda})(g), -\rho(g)) &= (\ol{\lambda}(f(\rho - \ol{\lambda})(g)) - f\rho(g) + c (\rho - \ol{\lambda})(g), -c\rho(g)) \\
   &= (f(\ol{\lambda} + \rho - \ol{\lambda})(g) -f\rho(g) + (\rho - \ol{\lambda})(cg), -\rho(cg)) \\
   &=( (\rho - \ol{\lambda})(cg), -\rho(cg) ).
\end{align*}
In particular, its image is an ideal of $B(U)$. Let $\ol{B}(U)$ be the quotient ring.

We now have an exact sequence of $A$-modules
\[
  \Gamma(S, \mathscr{O}_S(-\rho)) \to B(U) \to \ol{B}(U) \to 0
\]
in which the last two modules are $A$-algebras. We will call this the \emph{$\ol{B}$-sequence}. (In Proposition \ref{prop:bbar_sequence_is_nice} we will see that this sequence is short exact for a good choice of $U$.)

Let $\ol{U} := \Spec \ol{B}(U)$.
There is a map of $A$-algebras $B(U) \to \Gamma(U, \mathscr{O}_C)$ induced by $\ol{\lambda}$ on the first summand and $\pi^\sharp$ on the second.
The composite
\[
  \Gamma(S, \mathscr{O}_S(-\rho)) \to B(U) \to \Gamma(U, \mathscr{O}_C)
\]
is zero, so the morphism $B(U) \to \Gamma(U, \mathscr{O}_C)$ descends to an $A$-algebra morphism $\ol{B}(U) \to \Gamma(U, \mathscr{O}_C)$. Taking $\Spec$, we obtain a commutative triangle
\begin{equation} \label{eq:contraction_triangle_for_U}
  \xymatrix{
    U \ar[r]^\tau \ar[rd]_\pi & \ol{U} \ar[d]^{\ol{\pi}} \\
    & S.
  }
\end{equation}

We now state the construction of $\ol{C}$ in what we will call the ``standard situation." We will be able to show by the end of the next subsection that this
determines $\ol{C}$ in general.

\begin{situation}
  \begin{enumerate}
   \item $S = \Spec A$ is a Noetherian, finite dimensional, affine scheme.
   \item $(\pi : C \to S , \ol{\lambda} )$ is a simple mesa curve so that $C$ is a projective scheme.
   \item $E$ is the support of $\ol{\lambda}$.
   \item $\rho$ is the global section of $\ol{M}_S$ from Lemma \ref{lem:rho_is_global}.
   \item $\sigma_1, \ldots, \sigma_h$ are sections of $\pi$ so that each component of each geometric fiber of $C - E \to S$ meets at least one $\sigma_i$.
   \item $U$ is the complement of the $\sigma_i$s in $C$.
  \end{enumerate}
\end{situation}

\begin{definition} \label{def:contraction}
Assume that we are in the standard situation. Let $\ol{U} = \Spec \ol{B}(U)$, as above. The restriction of $\tau$ to $U - E$ will turn out to be an open immersion
(Proposition \ref{prop:iso_outside_E}). We define the contracted curve $\ol{C}$ as the pushout
\[
  \xymatrix{
    U - E \ar[r] \ar[d] & C - E \ar[d] \\
    \ol{U} \ar[r] & \ol{C}.
  }
\]

When we wish to emphasize the sections $\sigma_i$ in use, we will write $\ol{(C, \sigma_\bullet)}$ instead of $\ol{C}$.
\end{definition}

Note that the commutative triangle (\ref{eq:contraction_triangle_for_U}) induces a commutative triangle
\[
  \xymatrix{
    C \ar[r]^\tau \ar[rd]_\pi & \ol{C} \ar[d]^{\ol{\pi}} \\
    & S.
  }
\]

\subsection{Reduction to the standard situation}

We first reduce to the case that $C \to S$ is simple. Suppose that Theorem \ref{thm:contraction} holds for simple mesa curves, and let $\pi : C \to S$ be an arbitrary mesa curve.
We begin by constructing $C \to \ol{C}$ in an \'etale neighborhood of each geometric point $\ol{s}$ of $S$.
Suppose that $S$ is an \'etale neighborhood of $\ol{s}$ admitting a uniform set of charts (Definition \ref{def:unif_charts}).
Then the decomposition $\ol{\lambda} = \sum_{i = 1}^k \ol{\lambda}_{E_i}$ holds globally. Construct contractions $\tau_i : C \to \ol{C}_i$ of the simple mesa curves
$(C, \ol{\lambda}_{E_i})$. Note that $C - {E_i} \to \ol{C}_i$ is an open immersion, so we can identify $E_j$ with its image in $\ol{C}_i$, $j \neq i$. Then we can define
\[
  \ol{C} = \colim \left\{
    \vcenter{\vbox{\xymatrix{
      & C - \bigcup_{i} E_i \ar[dl] \ar[d] \ar[drr] \\
      \ol{C}_1 - \bigcup_{i \neq 1} E_i & \ol{C}_2 - \bigcup_{i \neq 2} E_i & \cdots & \ol{C}_k - \bigcup_{i \neq k} E_i
    }}}
  \right\}
\]
where each arrow is the open immersion induced by $\tau_i$. This satisfies the conclusions of Theorem \ref{thm:contraction}. Since the contraction construction commutes with base change for simple mesa curves, \'etale descent gives us
the contraction for general mesa curves.

We next claim that by working locally enough in $S$, we may work in the standard situation.

\begin{lemma} \label{lem:etale_local_reductions}
Let $\pi : C \to S$ be a simple mesa curve, and let $\ol{s}$ be a geometric point of $S$.

Then there is an \'etale neighborhood $S'$ of $\ol{s}$ and geometric point $\ol{s}'$ over $\ol{s}$ so that
\begin{enumerate}
  \item $S' = \Spec A$ is affine;
  \item $C' = C \times_S S'$ is a projective scheme;
  \item There are sections $\sigma_1, \ldots, \sigma_h : S' \to C'$ of $\pi|_{S'}$ meeting each component of
$(C' - E \times_S S') \cap \pi^{-1}(\ol{s}')$;
  \item For all geometric points $\ol{t} \in S'$ and components $Z$ of $(C' - E \times_S S') \cap \pi|_{S'}^{-1}(\ol{t})$, one of the sections $\sigma_i$ meets
$Z$.
\end{enumerate}
\end{lemma}
\begin{proof}
Note that properties (ii), (iii), (iv), are stable under \'etale localization of $S$. It is well-known that there is an \'etale neighborhood $U_1$ of $\ol{s}$ so that (ii) and (iii) hold.

Construct an \'etale neighborhood $U$ of $\ol{s}$ admitting a uniform set of charts $\{ V_i \to C|_U \}_{i \in I}$ (Definition \ref{def:unif_charts}).
Suppose $Z$ is a component of $(C - E) \cap \pi^{-1}(\ol{t})$, where $\ol{t}$ is a geometric point of $U$. Let $\ol{u}$ be any smooth geometric point of $Z$. The description of $\ol{M}_C$ in the proof of Lemma \ref{lem:charts_for_dual_graph_DVR}
implies that there is a component $v$ of $C_{\ol{s}}$ so that $(\ol{\lambda}_v)_{\ol{t}} = \ol{\lambda}_{\ol{u}}$. On the other hand, since $Z$ is not a component of $E \cap \pi^{-1}(\ol{t})$, the stalk of $\ol{\lambda}$ at $\ol{u}$ is zero. By definition of
$\ol{\lambda}$, $\ol{\lambda}_v$ is a sum of smoothing parameters on a path from $v$ to a component $w$ of $C_{\ol{s}}$ outside of $E$. Since $(\ol{\lambda}_v)_{\ol{t}} = 0$, all of the nodes on this path smooth out over $\ol{t}$. This implies that
$Z$ meets the section $\sigma_j$ going through $w$, so we have (iv).

Finally, take an affine neighborhood $U_2$ of $\ol{s}$ inside $U \times_S U_1$. This gives us the desired \'etale neighborhood.
\end{proof}

\begin{lemma}
It suffices to construct $C \to \ol{C} \to S$ when $S$ is Noetherian and finite dimensional.
\end{lemma}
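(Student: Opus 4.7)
The plan is a standard approximation argument. We first observe that $S$ may be assumed affine: by Lemma \ref{lem:etale_local_reductions} we can work \'etale-locally on $S$ in an affine neighborhood, and the eventual construction will glue along \'etale descent because of the naturality (vi). Write $S = \Spec A$, and express $A = \colim_\alpha A_\alpha$ as the filtered colimit of its finitely generated $\Z$-subalgebras. Each $A_\alpha$ is Noetherian, and $\Spec A_\alpha$ is of finite type over $\Spec \Z$, hence finite-dimensional; set $S_\alpha = \Spec A_\alpha$, so that $S = \lim_\alpha S_\alpha$.

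Next, I would spread out the mesa data along this limit. The underlying nodal curve $C \to S$ is proper and of finite presentation, so by standard limit results for finitely presented objects (EGA IV.8) it descends to a proper nodal curve $C_\alpha \to S_\alpha$ for $\alpha$ sufficiently large, with $C \cong C_\alpha \times_{S_\alpha} S$. The fs log structures on $S$ and $C$ are \'etale-locally given by charts from finitely generated monoids, and such charts descend after enlarging $\alpha$; likewise the global section $\ol{\lambda} \in \Gamma(C, \ol{M}_C)$ comes from a section $\ol{\lambda}_\alpha \in \Gamma(C_\alpha, \ol{M}_{C_\alpha})$ after a further enlargement. Finally, the mesa conditions---the piecewise linear shape of $\ol{\lambda}$ on each geometric fiber (Proposition \ref{prop:smoothed_pw_linear}) and the acyclicity $H^1(E_{\ol{s}}, \mathscr{O}_{E_{\ol{s}}}(-\ol{\lambda}_{\ol{s}})) = 0$---are constructible on fibers (the latter by semicontinuity of cohomology in proper flat families), so after passing to a still larger $\alpha$ and possibly shrinking $S_\alpha$ to an affine open containing the image of $S$, we may arrange that every geometric fiber of $C_\alpha \to S_\alpha$ satisfies them, making $(C_\alpha, \ol{\lambda}_\alpha)$ a mesa curve.

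Assuming Theorem \ref{thm:contraction} is known for Noetherian finite-dimensional bases, we then obtain $\tau_\alpha : C_\alpha \to \ol{C}_\alpha$ with $\psi_\alpha : \ol{C}_\alpha \to S_\alpha$ satisfying (i)--(vii). Define $\ol{C} := \ol{C}_\alpha \times_{S_\alpha} S$, with $\tau$ and $\psi$ the pullbacks of $\tau_\alpha$ and $\psi_\alpha$. Properties (i)--(v) and (vii) pass through the base change $S \to S_\alpha$ by their nature, and property (vi) for $\ol{C}_\alpha$ both ensures that this construction is independent of $\alpha$ (different choices agree after pulling back to a common refinement) and yields property (vi) for $\ol{C}$ itself. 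The main technical obstacle is the simultaneous descent of all of the mesa package---especially the acyclicity hypothesis and the decomposition $\ol{\lambda} = \sum_i \ol{\lambda}_{E_i}$ into mesas with disjoint support on each geometric fiber---to a constructible locus on some Noetherian $S_\alpha$; this is where semicontinuity of cohomology and careful bookkeeping of the fiberwise combinatorics enter. Everything else is formal limit theory.
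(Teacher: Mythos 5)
Your proposal is correct and is essentially the paper's argument: the paper's own proof is a two-sentence sketch ("Every affine $S$ is a projective limit of Noetherian and finite dimensional rings. All of the properties we ask of $C \to \ol{C} \to S$ are preserved under projective limits."), and you have simply filled in the standard spreading-out details---descent of the curve, the fs log structure, the section $\ol{\lambda}$, and the fiberwise mesa/acyclicity conditions---that the paper leaves implicit. The one point you rightly flag as the technical crux (constructibility of the mesa package on the Noetherian approximation) is not addressed in the paper either, so your write-up is, if anything, more careful than the original.
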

\begin{proof}
Every affine $S$ is a projective limit of Noetherian and finite dimensional rings. All of the properties we ask of $C \to \ol{C} \to S$ are preserved under projective limits.
\end{proof}

Next, we need to be able to glue our local contracted curves together. We state a couple of results early. Proposition \ref{prop:C_bar_commutes_base_change} will follow immediately from Corollary
\ref{cor:U_bar_commutes_base_change} and Proposition \ref{prop:iso_outside_E}. Proposition \ref{prop:section_invariant_early} is Proposition \ref{prop:section_invariant}.

\begin{proposition} \label{prop:C_bar_commutes_base_change}
Assume we are in the standard situation and let $f : T \to S$ be a morphism of affine, Noetherian, finite-dimensional schemes. Pull back the data of the standard situation along
$f$ to obtain $\pi_T : C_T \to T$, $E_T$, $\rho_T$, $\sigma_{T,1}, \ldots, \sigma_{T, h}$, and $U_T$ over $T$. These data are again in the standard situation, so we may apply
Definition \ref{def:contraction} to obtain a triangle
\[
  \xymatrix{
    C_T \ar[r] \ar[rd] & \ol{C_T} \ar[d] \\
    & T.
  }
\]

Then there is a uniquely specified isomorphism $\phi_{S,T} : \ol{C_T} \to \ol{C} \times_S T$ so that
\[
  \xymatrix{
    C_T \ar[rdd] \ar[rr] \ar[rd]& & \ol{C} \times_S T  \ar[ldd] \\
     & \ol{C_T} \ar[d] \ar^{\phi_{S,T}}[ru]\\
    & T
  }
\]
commutes. Moreover, these isomorphisms satisfy the cocycle condition in the sense that if $g : W \to T$ is a second
morphism of affine, Noetherian, finite-dimensional schemes,
\[
  \phi_{S,W} = \phi_{T,W} \circ \phi_{S,T}|_{W}.
\]
\end{proposition}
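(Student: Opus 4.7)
The plan is to trace the construction of $\ol{C}$ in Definition \ref{def:contraction} through the base change, reducing the statement to Corollary \ref{cor:U_bar_commutes_base_change} (base change for $\ol{U}$) and Proposition \ref{prop:iso_outside_E} (which says $U - E \to \ol{U}$ is an open immersion).

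First, I would verify that pulling the standard-situation data back along $f : T \to S$ identifies all of the relevant pieces canonically: $C_T = C \times_S T$, and analogously $E_T$, $U_T$, $C_T - E_T$, and $U_T - E_T$. By Definition \ref{def:contraction}, $\ol{C_T}$ is the pushout
\[
\ol{C_T} \;=\; \ol{U_T} \sqcup_{U_T - E_T} (C_T - E_T),
\]
where both structure arrows are open immersions thanks to Proposition \ref{prop:iso_outside_E}. A pushout of algebraic spaces along open immersions is simply an open-cover gluing, and gluings of algebraic spaces commute with arbitrary base change. It follows that
\[
\ol{C} \times_S T \;\cong\; (\ol{U} \times_S T) \sqcup_{(U - E) \times_S T} ((C - E) \times_S T).
\]
Corollary \ref{cor:U_bar_commutes_base_change} supplies a canonical isomorphism $\ol{U_T} \cong \ol{U} \times_S T$ over $T$ compatible with the map from $U_T$, and the other two vertices of the two pushout diagrams are tautologically identified. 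Gluing these identifications produces the isomorphism $\phi_{S,T} : \ol{C_T} \to \ol{C} \times_S T$, and its compatibility with the maps from $C_T$ and to $T$ is automatic from the gluing.

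For uniqueness, any morphism $\psi : \ol{C_T} \to \ol{C} \times_S T$ over $T$ making the stated diagram commute restricts, via the open cover by $\ol{U_T}$ and $C_T - E_T$, to a morphism $\ol{U_T} \to \ol{U} \times_S T$ over $T$ intertwining the canonical maps from $U_T$, which must equal the isomorphism of Corollary \ref{cor:U_bar_commutes_base_change} by its uniqueness, and to a morphism $C_T - E_T \to (C - E) \times_S T$ over $T$ commuting with the map from $C_T - E_T$, which must be the identity. These two restrictions agree on the overlap for the same reasons, so $\psi = \phi_{S,T}$. The cocycle condition is then immediate: given $g : W \to T$, both $\phi_{S,W}$ and $\phi_{T,W} \circ \phi_{S,T}|_W$ are isomorphisms $\ol{C_W} \to \ol{C} \times_S W$ over $W$ compatible with the canonical map from $C_W$, and uniqueness forces them to coincide.

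The main obstacle, which is quite mild given the inputs, is justifying that the pushout in Definition \ref{def:contraction} is an honest open-cover gluing, so that its formation commutes with base change in the category of algebraic spaces. Proposition \ref{prop:iso_outside_E} is precisely what accomplishes this, and so the present proposition is essentially a formal consequence of Corollary \ref{cor:U_bar_commutes_base_change} together with the gluing description of $\ol{C}$.
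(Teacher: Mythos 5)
Your proposal is correct and takes exactly the route the paper intends: the paper states that this proposition "will follow immediately from Corollary \ref{cor:U_bar_commutes_base_change} and Proposition \ref{prop:iso_outside_E}," and your argument — identifying the pushout as a gluing along open immersions, which commutes with base change, and then invoking the canonical isomorphism $\ol{U_T} \cong \ol{U} \times_S T$ — fills in precisely those details. The uniqueness and cocycle arguments via restriction to the open cover are likewise what the paper leaves implicit.
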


\begin{proposition} \label{prop:section_invariant_early}
Suppose we are in the standard situation and $\sigma'_1, \ldots, \sigma'_k$ are a second choice of sections. There is a uniquely
specified isomorphism $\phi_{\sigma_\bullet, \sigma_\bullet'} : \ol{(C, \sigma_\bullet)} \to \ol{(C, \sigma_\bullet')}$ so that
\[
  \xymatrix{
    C_T \ar[rdd] \ar[rr] \ar[rd]& & \ol{(C, \sigma_\bullet')}  \ar[ldd] \\
     & \ol{(C, \sigma_\bullet)} \ar[d] \ar^{\phi_{\sigma_\bullet, \sigma_\bullet'}}[ru]\\
    & T
  }
\]
commutes. Moreover, if $\sigma''_1, \ldots, \sigma''_l$ is a third choice of sections, the cocycle condition
\[
  \phi_{\sigma_\bullet, \sigma_\bullet''} = \phi_{\sigma_\bullet'', \sigma_\bullet'} \circ \phi_{\sigma_\bullet, \sigma_\bullet'}
\]
holds.
\end{proposition}

Now, suppose that $\pi : C \to S$ is a simple mesa curve where $S = \Spec A$ is Noetherian and finite-dimensional. Then there is an \'etale cover $S' \to S$ and sections of $\pi|_{S'}$ as in Lemma \ref{lem:etale_local_reductions}. The two preceding results
imply that the map $C' \to \ol{C'}$ over $S'$ descends to a map over $S$. Taking projective limits, we have contractions of mesa curves over all affine schemes in a manner that commutes with base change, so \'etale descent gives contractions of mesa curves over a general base.

\bigskip

This leaves a great deal to be verified. We must check that in the standard situation,
\begin{enumerate}
  \item The construction commutes with pullback in $S$.
  \item $\ol{C}$ is flat over $S$.
  \item $\tau$ is surjective.
  \item $\tau : U - E \to \ol{U} - \tau(E)$ is an isomorphism.
  \item $\ol{C}$ does not depend on the choice of sections $\sigma_i : S \to C$.
  \item If $S$ is a log point, $\ol{C}$ has a genus $g$ singularity at $\tau(E)$, which is elliptic Gorenstein if $\ol{\lambda}$ is a steep mesa with genus one support.
  \item $\ol{\pi}: \ol{C} \to S$ has reduced geometric fibers.
  \item $\ol{\pi} : \ol{C} \to S$ is proper.
\end{enumerate}

We spend the remainder of the paper establishing these facts in roughly this order.

\subsection{Values of $\mathscr{O}_E(-\ol{\lambda})$}

In this section, we will examine the values
that sections of $\mathscr{O}_E(-\ol{\lambda})$ on $E$ can take on the attachment points of $E$
to the rest of $C$. These turn out to be constrained by a Mittag-Leffler problem in a way that will later imply that $\tau(E)$ is a genus $g$ singularity.

For each reduced curve $D$ over an algebraically closed field $k$, sheaf $\mathscr{L} \in Pic(D)$, and point $p \in D$, fix an isomorphism $\mathrm{ev}_p : \mathscr{L} \otimes k(p) \to k$. Given a section
$\sigma \in \mathscr{L}(V)$ where $p \in V$, denote by $\sigma(p)$ the image of $\sigma$ under the composition $\mathscr{L}(V) \longrightarrow \mathscr{L} \otimes k(p) \overset{\mathrm{ev}_p}{\longrightarrow} k$.

We now state the main result of this section.

\begin{proposition} \label{prop:generalcurvesectionvalues}
Suppose that we are in the standard situation with $S$ the spectrum of an algebraically closed field $k$. Let $x_1, x_2, \ldots, x_m$ be the points where $E$ meets $\ol{C - E}$.
Consider the following problem:

\begin{center}
\fbox{
\parbox{.8\textwidth}{
Given $a_1, \ldots, a_m \in k$, find a section $\sigma \in \Gamma(E, \mathscr{O}_E(-\ol{\lambda}))$ so that
\[
   \sigma(x_i) = a_i, \quad 1 \leq i \leq m.
\]
}
}
\end{center}

There is a codimension $g$ condition on the $a_i$s under which a solution exists. Moreover, if there is a solution, then there is a 1-dimensional family of solutions.

If $E$ has genus one, then there are constants $c_1, \ldots, c_m \in k$ so that a solution exists if and only if
\[
   c_1a_1 + \cdots + c_ma_m = 0,
\]
and $c_i \neq 0$ if and only if $x_i$ is on a steep path from $\core(E)$.
\end{proposition}

This follows from the next lemma by taking $\mathcal{L} = \mathscr{O}_E(-\ol{\lambda})$, $E_1$ equal to the top of $\ol{\lambda}$, and
$E_2, \ldots, E_r$ to be the remaining connected subcurves of $E$ on which $\ol{\lambda}$ is constant. Proposition \ref{prop:log_curve_line_bundles} ensures that $\mathcal{L}$ restricts to the
$E_i$s as desired.
We remark that the dual graph of $E$ in the following statement becomes a tree after contracting $E_1$, since $E_1$ contains the core of $E$. This makes the points $q^{(i)}$ well-defined.

\begin{lemma} \label{lem:sectionvaluesinduction}
Let $E$ be a connected reduced nodal curve of positive genus over an algebraically closed field $k$. Let $E_1, \ldots, E_r$ be subcurves of $E$ so that
\begin{enumerate}
  \item $E$ is the union of $E_1, \ldots, E_r$;
  \item $E_1$ contains the core of $E$;
  \item $E_i$ and $E_j$ have no irreducible components in common when $i \neq j$.
\end{enumerate}
Let $\mathcal{L}$ be a line bundle on $E$ so that
\begin{enumerate}
  \item $H^1(E, \mathcal{L}) = 0$;
  \item $\mathcal{L}$ has non-negative degree on the irreducible components of $E$;
  \item $\mathcal{L} \otimes \mathscr{O}_{E_1} \cong \mathscr{O}_{E_1}(p_1^{(1)} + \cdots + p_{n_1}^{(1)})$ where $p_1^{(1)}, \ldots, p_{n_1}^{(1)}$ are distinct points that include the attachment points of $E_1$ to the rest of $E$;
  \item for $i \neq 1$, we have $\mathcal{L} \otimes \mathscr{O}_{E_i} \cong \mathscr{O}_{E_i}(p_1^{(i)} + \cdots + p_{n_i}^{(i)} - q^{(i)})$ where
   $q^{(i)}$ is the attachment point of $E_i$ ``heading towards" $E_1$ and $p_1^{(i)}, \ldots, p_{n_i}^{(i)}$ are distinct points, disjoint from $q^{(i)}$, that include the attachment points of $E_i$ ``heading away" from $E_1$;
\end{enumerate}

Let $x_1, \ldots, x_m$ be the points $p_j^{(i)}$ belonging to only one of the $E_i$. Consider the following problem:

\begin{center}
\fbox{
\parbox{.8\textwidth}{
Given $a_1, \ldots, a_m \in k$, find a section $\sigma \in \Gamma(E, \mathcal{L})$ so that
\[
   \sigma(x_i) = a_i, \quad 1 \leq i \leq m.
\]
}
}
\end{center}

There is a codimension $g$ condition on the $a_i$s under which a solution exists. Moreover, if there is a solution, then there is a 1-dimensional family of solutions.

If $E$ has genus one, then there are constants $c_1, \ldots, c_m \in k$ so that a solution exists if and only if
\[
   c_1a_1 + \cdots + c_ma_m = 0,
\]
and $c_j = 0$ if and only if the path in the dual graph of $E$ from $\core(E)$ to $x_j$ includes either
\begin{enumerate}
  \item a rational component of $E_1$ or
  \item a rational component $Z$ of $E_i$ for $i \neq 1$ so that $Z$ does not contain $q^{(i)}$.
\end{enumerate}
\end{lemma}

We will prove Lemma \ref{lem:sectionvaluesinduction} by induction on $r$. To give a section
$\sigma \in \Gamma(E, \mathcal{L})$ it is equivalent to specify a tuple of sections
$(\sigma_i) \in \prod_{i = 1}^r \Gamma(E_i, \mathcal{L} \otimes \mathscr{O}_{E_i})$ whose values agree on the nodes joining the $E_i$.
Therefore we start by gathering some lemmas about the values of $\mathcal{L}$ on the $E_i$.

\begin{lemma} \label{lem:genus_zero_nodal_section_values}
Let $p_1, \ldots, p_n, q$ be $n + 1$ distinct smooth closed points of a connected reduced genus zero nodal curve $D$. Then
\begin{enumerate}
  \item For any $a_1, \ldots, a_n \in k$, there is a unique global section $\sigma$ of $\mathscr{O}_{D}(p_1 + \cdots + p_n - q)$ satisfying the equations
\[
  (\star) \quad \sigma(p_i) = a_i, \quad 1 \leq i \leq n.
\]
  \item There are constants $c_1, \ldots, c_n$, so that for all global sections $\sigma$ of $\mathscr{O}_{D}(p_1 + \cdots + p_n - q)$
\[
  \sigma(q) = c_1\sigma(p_1) + \cdots + c_n\sigma(p_n)
\]
Moreover, $c_i$ is nonzero if and only if $p_i$ is on the same component of $D$ as $q$.
\end{enumerate}
\end{lemma}

\begin{proof}
The lemma holds when $D$ is irreducible by identifying the sections of $\mathscr{O}_{D}(p_1 + \cdots + p_n - q)$ with polynomials of degree $n - 1$ on $\mathbb{P}^1$. We proceed by induction on the number of irreducible components of $D$.

Assume that $D$ has more than one irreducible component. Let $T$ be the tree whose vertices are the irreducible components of $D$ with an
edge between those components that intersect. Root $T$ at the component containing $q$. Let $D'$ be a leaf of $T$, and let $D''$ be the union of the other irreducible components
of $D$. Let $P' = (p_1 + \cdots + p_n)|_{D'}$ and $P'' = (p_1 + \cdots + p_n)|_{D''}$. A global section of
$\mathscr{O}_D(p_1 + \cdots + p_n - q)$ on $D$ satisfying equations ($\star$) is the same as a pair of sections $\sigma'' \in \Gamma(D'', \mathscr{O}_{D''}(P'' - q))$ and $\sigma' \in \Gamma(D', \mathscr{O}_{D'}(P'))$ so that
\begin{enumerate}
  \item $\sigma''(p_i) = a_i$ when $p_i \in D''$,
  \item $\sigma'(p_i) = a_i$ when $p_i \in D'$, and
  \item $\sigma''(q') = \sigma'(q')$.
\end{enumerate}
By induction, there is a unique $\sigma''$ satisfying (i). So we look for a section $\sigma'$ satisfying (ii) and (iii). Consider the exact sequence
\[
  0 \to \mathscr{O}_{D'}(-q') \to \mathscr{O}_{D'}(P') \to k_{q'} \oplus \bigoplus_{p \in P'} k_p \to 0,
\]
where the last map is the direct sum of the evaluation homomorphisms at $p \in P'$ and $q'$. Since $D' \cong \mathbb{P}^1_k$ and $\mathscr{O}_{D'}(-q') \cong \mathscr{O}_{\mathbb{P}^1_k}(-1)$, taking global sections
yields an isomorphism $\Gamma(D', \mathscr{O}_{D'}(P')) \to k_q \oplus \bigoplus_{p \in P'} k_p$. It follows that there is a unique $\sigma'$ satisfying (ii) and (iii).

This leaves only the statement about whether coefficients are 0 or nonzero. We know that the statement holds for $c_i$ with $p_i \in D''$ by induction. For the other $c_i$s, we notice that $\sigma''$ was chosen before $\sigma'$, so the value of
$\sigma''$ at $q$ is not affected by the value of $\sigma'$ at $p_i \in D'$.
\end{proof}

\begin{lemma} \label{lem:connecting_hom_is_sum}
Suppose $D$ is a reduced nodal curve. Let $p_1, \ldots, p_n$ be distinct smooth closed points of $D$, $n \geq 1$.
For each $i$, $1 \leq i \leq n$, let
\[
  \delta_i : k(p_i) \to H^1(D, \mathscr{O}_D)
\]
be the connecting homomorphism associated to the short exact sequence
\[
  0 \to \mathscr{O}_D \to \mathscr{O}_D(p_i) \to k(p_i) \to 0.
\]

Then $\delta : \bigoplus_{i = 1}^n k(p_i) \to H^1(D, \mathscr{O}_D)$, the connecting homomorphism associated to the short exact sequence
\[
   0 \to \mathscr{O}_D \to \mathscr{O}_D(p_1 + \cdots + p_n) \to \bigoplus_{i = 1}^n k(p_i) \to 0,
\]
is the direct sum of the maps $\delta_i$.
\end{lemma}

\begin{proof}
Notice that for each $i$ there is a diagram of exact sequences
\[
  \xymatrix{
    0 \ar[r] & \mathscr{O}_D \ar[r] \ar@{=}[d] & \mathscr{O}_D(p_i) \ar[d] \ar[r] & k(p_i) \ar[r] \ar[d] & 0 \\
    0 \ar[r] & \mathscr{O}_D \ar[r] & \mathscr{O}_D(p_1 + \cdots + p_n) \ar[r] & \bigoplus_{j = 1}^n k(p_j) \ar[r] &  0
  }
\]
in which the rightmost map is the inclusion into the $i$th coordinate. Taking associated long exact sequences, we arrive at the commutative square
\[
  \xymatrix{
     \cdots \ar[r] & k(p_i) \ar[d] \ar[r]^{\delta_i} & H^1(D, \mathscr{O}_D) \ar@{=}[d] \ar[r] & \cdots \\
     \cdots \ar[r] & \bigoplus_{j = 1}^n k(p_j) \ar[r]^{\delta} & H^1(D, \mathscr{O}_D) \ar[r] & \cdots.
  }
\]
This implies the result.
\end{proof}

We say that a nodal curve $D$ of genus one is \emph{minimal} if it is equal to its core. Equivalently $D$ is minimal if and only if $D$ is a smooth
elliptic curve or a ring of nodally attached rational curves.

\begin{lemma} \label{lem:elliptic_core_no_h1}
Suppose $D$ is a minimal genus one curve. Let $Z$ be an effective divisor of positive degree on $D$. Then $H^1(D, \mathscr{O}_D(Z)) = 0$.
\end{lemma}
\begin{proof}
In any case, $D$ is Gorenstein with dualizing sheaf $\omega_D \cong \mathscr{O}_D$. By Serre duality,
\[
  H^1(D, \mathscr{O}_D(Z)) \cong H^0(D, \mathscr{O}_D(-Z))^\vee.
\]
If $D$ is smooth elliptic, the latter group is zero since $\mathscr{O}_D(-Z)$ has negative degree.

If $D$ is a ring of rational curves, let $D_1, \ldots, D_l$ be its components, and let $p_1, \ldots, p_l$ be its nodes, so that $p_i$ joins $D_i$ to $D_{i + 1}$ for $i < l$ and $p_l$ joins $D_l$ to $D_1$.
Then we may identify $H^0(D, \mathscr{O}_D(Z))$ with the set of tuples of sections
$(\sigma_i) \in \prod_i \Gamma(D_i, \mathscr{O}_{D_i}(-Z|_{D_i}))$ satisfying $\sigma_i(p_i) = \sigma_{i + 1}(p_i)$ for all $1 \leq i < l$ and
$\sigma_l(p_l) = \sigma_1(p_l)$.

Now, each $\mathscr{O}_{D_i}(-Z|_{D_i})$ has non-positive degree. For those $i$ where $-Z|_{D_i} = 0$, if for either point $p_j \in D_i$ we have $\sigma_i(p_j) = 0$, then $\sigma_i$ is zero. For those $i$ where $\mathscr{O}_{D_i}(-Z|_{D_i})$ has negative degree, $\sigma_i = 0$. Since there is at least one $i$ with $\deg(-Z|_{D_i}) < 0$, it follows by working our way around the ring that the only global section of $\mathscr{O}_D(-Z)$ is zero.
\end{proof}

\begin{lemma} \label{lem:arbitrary_genus_section_values}
Let $D$ be a connected reduced nodal curve over $k$ of genus $g$. Let $p_1, \ldots, p_n$ be distinct smooth closed points of $D$, $n \geq 1$, so that $H^1(\mathscr{O}_D(p_1 + \cdots + p_n)) = 0$. Consider the following problem:

\medskip

\begin{center}
\fbox{
\parbox{.8\textwidth}{
Given $a_1, \ldots, a_n \in k$, find a global section $\sigma$ of $\mathscr{O}_D(p_1 + \cdots + p_n)$ so that
\[
   \sigma(p_i) = a_i, \quad 1 \leq i \leq n.
\]
}
}
\end{center}

\medskip

There is a codimension $g$ condition on the $a_i$s under which a solution exists. Moreover, if there is a solution, then there is a 1-dimensional family of solutions.

When $D$ has genus 1, there are constants $c_1, \ldots, c_n$ so that the problem admits a solution if and only if
\[
  c_1a_1 + \cdots + c_na_n = 0,
\]
and the coefficient $c_i$ is nonzero if and only if $p_i$ lies in the minimal genus 1 subcurve of $D$.
\end{lemma}

\begin{proof}
There is a short exact sequence
\[
  0 \to \mathscr{O}_D \to \mathscr{O}_D(p_1 + \cdots + p_n) \overset{\mathrm{ev}}{\longrightarrow} \bigoplus_{i = 1}^n k(p_i) \to 0
\]
where the right map is induced by evaluation at $p_1, \ldots, p_n$.

Taking global sections, we obtain
\begin{equation} \label{eq:top_lifting_condition}
  0 \to k \to \Gamma(D, \mathscr{O}_D(p_1 + \cdots + p_n)) \overset{\mathrm{ev}}{\longrightarrow} k^n \to k^g \to 0.
\end{equation}
Our problem can now be identified with the problem
of finding a lift of $(a_1, \ldots, a_n) \in k^n$ to $\Gamma(D, \mathscr{O}_D(p_1 + \cdots + p_n))$. By exactness, there is a codimension $g$ condition under which
a lift exists. Exactness also implies that solutions form a 1-dimensional family.

\medskip

Now suppose that $D$ is a \emph{minimal} genus 1 curve. Then by Lemma \ref{lem:elliptic_core_no_h1}, $H^1(D, \mathscr{O}_{D}(p_i)) = 0$ for each $i$, so the map $\delta_i : k(p_i) \to H^1(D, \mathscr{O}_{D}) \cong k$
associated to the short exact sequence
\[
  0 \to \mathscr{O}_{D} \to \mathscr{O}_{D}(p_i) \to k(p_i) \to 0
\]
is surjective. In particular, there is a nonzero constant $c_i$ so that $\delta_i: a \mapsto c_ia$. By Lemma \ref{lem:connecting_hom_is_sum},
the map $k^n \to k^g = k$ in the exact sequence (\ref{eq:top_lifting_condition}) is $(a_1, \ldots, a_n) \mapsto c_1a_1 + \cdots + c_na_n$.

\medskip

Finally, suppose that $D$ has genus 1, but is not minimal. Decompose $D$ into connected subcurves $D_1, \ldots, D_l$ where $D_1$ is the core of $D$ and the remaining $D_j$, $j = 2, \ldots, l$, are the connected components of $\ol{D - D_1}$. Note that for $j \neq 1$, $D_j$ meets $D_1$ at a unique point $q_j$.
By Lemma \ref{lem:connecting_hom_is_sum}, the map $k^n \to k^g = k$ in the exact sequence (\ref{eq:top_lifting_condition}) is of the form $(a_1, \ldots, a_n) \mapsto c_1a_1 + \cdots + c_na_n$.
We just need to determine which $c_i$ are nonzero.

For each $j$, let $P_j$ be the set of points $p_i$ so that $p_i \in D_j$. A global section $\sigma \in \Gamma(D, \mathscr{O}_D(p_1 + \cdots + p_n))$
is equivalent to a tuple of sections $(\sigma_j) \in \prod_{j = 1}^l \Gamma(D_j, \mathscr{O}_{D_j}(P_j))$, agreeing at the points $q_j$.
By the result for minimal genus 1 curves, we can find a section $\sigma_1 \in \Gamma(D_1, \mathscr{O}_{D_1}(P_1))$ with $\sigma_1(p_i) = a_i$ for the $p_i \in D_1$ if and only if
\[
  \sum_{i} c_ia_i = 0
\]
where the sum is quantified over those $i$ so that $p_i \in D_1$. Moreover, for these $i$ we have that $c_i \neq 0$. Taking $c_i = 0$ for those $i$ with $p_i \not\in D_1$, we can rewrite the condition for the existence of $\sigma_1$ as
\[
  \sum_{i = 1}^n c_ia_i = 0.
\]

To complete the proof, we need to show that for any such $\sigma_1$, there is a unique extension of $\sigma_1$ to $D$ satisfying $\sigma(p_i) = a_i$ for
all $i$. To see this, for each $j = 2, \ldots, l$, build a sequence
\[
  0 \to \mathscr{O}_{D_j}(-q_1) \to \mathscr{O}_{D_j}(P_j) \to k(q_j) \oplus \bigoplus_{p \in P_J} k(p) \to 0
\]
then apply the global sections functor.
\end{proof}

\begin{proof}[Proof of Lemma \ref{lem:sectionvaluesinduction}]
Lemma \ref{lem:arbitrary_genus_section_values} is the result when $r = 1$. We proceed by induction on $r$.

Let $r > 1$. Let $T$ be the graph with vertices $E_1, \ldots, E_r$ and an edge between $E_{i_1}$ and $E_{i_2}$ for $i_1 \neq i_2$ whenever
$E_{i_1} \cap E_{i_2}$ is non-empty. Since $E_1$ contains the core of $E$, $T$ must be a tree. Consider $T$ as a rooted tree rooted at $E_1$.

Choose $j$ so that $E_j$ is one of the leaves of $T$. Note that $j \neq 1$ and $p^{(j)}_1, \ldots, p^{(j)}_{n_j}$
all lie in the set $\{ x_1, \ldots, x_m \}$. Without loss of generality, we may assume $p^{(j)}_1 = x_1, \ldots, p^{(j)}_{n_j} = x_{n_j}$.
Let $E_{\neq j} := \bigcup_{i \neq j} E_i$.

Now, the elements $\sigma \in \Gamma(E, \mathcal{L})$ satisfying $\sigma(x_i) = a_i$ for $1 \leq i \leq m$ are in bijection with
pairs $(\sigma_j, \sigma_{\neq j}) \in \Gamma(E_j, \mathcal{L} \otimes \mathscr{O}_{E_j}) \times \Gamma(E_{\neq j}, \mathcal{L} \otimes \mathscr{O}_{E_{\neq j}})$ such that
\begin{enumerate}
  \item $\sigma_{j}(x_i) = a_i$ for $1 \leq i \leq n_j$;
  \item $\sigma_{\neq j}(x_i) = a_i$ for $n_j < i \leq m$;
  \item $\sigma_{j}(q^{(j)}) = \sigma_{\neq j}(q^{(j)})$.
\end{enumerate}

By Lemma \ref{lem:genus_zero_nodal_section_values}, there is a unique section $\sigma_{j}$ satisfying (i). Let $\sigma_{j}$ be this section. Let
$c_1, \ldots, c_{n_j}$ be the constants of the lemma, so that in particular we have
\[
  \sigma_{j}(q^{(j)}) = c_1a_1 + \cdots + c_{n_j}a_{n_j}.
\]
Recall that $c_i \neq 0$ if and only if $x_i$ lies the irreducible component of $E_j$ containing $q^{(j)}$. Since $\mathcal{L}$ has non-negative
degree on each irreducible component of $E$, at least one of the $c_i$s must be nonzero.

We conclude that the problem of finding $\sigma \in \Gamma(E, \mathscr{L})$ satisfying $\sigma(x_i) = a_i$ for $1 \leq i \leq m$ is equivalent
to finding $\sigma_{\neq j} \in \Gamma(E_{\neq j}, \mathscr{L} \otimes \mathscr{O}_{E_{\neq j}})$ satisfying $\sigma_{\neq j}(x_i) = a_i)$ for $n_j < i \leq m$
and $\sigma_{\neq j}(q^{(j)}) = c_1a_1 + \cdots + c_{n_j}a_{n_j}$. By induction, there is a codimension $g$ condition on
\[
  (c_1a_1 + \cdots + c_{n_j}a_{n_j}, a_{n_j + 1}, \ldots, a_m)
\]
under which a solution exists, and when a solution exists there is a 1-dimensional family of solutions. Since pullback along a surjective linear map preserves codimension, this codimension $g$ condition pulls back to a codimension $g$ condition on $(a_1, \ldots, a_m).$

All that remains is the additional claim when $E$ has genus one, so assume that $E$ has genus one. By induction, the codimension one condition on the existence
of $\sigma$ can be written
\begin{align*}
  c_1'(c_1a_1 + \cdots + c_{n_j}a_{n_j}) + c_{n_j + 1}'a_{n_j + 1} + \cdots + c_m'a_m = \\
  c_1'c_1a_1 + \cdots + c_1'c_{n_j}a_{n_j} + c_{n_j + 1}'a_{n_j + 1} + \cdots + c_m'a_m  = 0
\end{align*}
for some constants $c_1', c_{n_j + 1}', \ldots, c_m'$ where $c_1'$ is zero if and only if the path from the core to $q^{(j)}$ passes through
a rational component of $E_1$ or a component of an $E_i$ not containing $q^{(i)}$, and similarly for the other $c_i'$s. Comparing when $c_1'$ and $c_1, \ldots, c_{n_j}$
are zero, we conclude that the coefficients of the $a_i$s are zero when claimed.
\end{proof}

Shuffling our assumptions around slightly, the same methods can be used to give a convenient criterion for acyclicity. In particular, we see
that a steep mesa with genus one support is automatically acyclic.

\begin{lemma} \label{lem:acyclicity_criterion}
Assume we are in the situation of Lemma \ref{lem:sectionvaluesinduction}, except that we do not assume $H^1(E, \mathcal{L}) = 0$.
Then $H^1(E, \mathcal{L}) = 0$ if and only if $H^1(E_1, \mathcal{L}) = 0$.

In particular, if $S$ is the spectrum of an algebraically closed field and $(\pi : C \to S, \ol{\lambda})$ satisfies the conditions of a mesa curve except
possibly acyclicity, then $(\pi : C \to S, \ol{\lambda})$ is a mesa curve if and only if $H^1(E_1, \mathscr{O}_{E_1}(-\ol{\lambda})) = 0$ for
$E_1$ equal to the top of $\ol{\lambda}$.
\end{lemma}

\begin{proof}
The claim is trivial when $r = 1$.

If $r > 1$, choose $E_j$ and $E_{\neq j}$ as in the proof of Lemma \ref{lem:sectionvaluesinduction}. Then
\[
  0 \to \mathcal{L} \to (\mathcal{L} \otimes \mathscr{O}_{E_{\neq j}}) \oplus (\mathcal{L} \otimes \mathscr{O}_{E_j}) \to k(q^{(j)}) \to 0
\]
is exact.

The long exact sequence in cohomology implies
\[
  H^1(E, \mathcal{L}) \to H^1(E_{\neq j}, \mathcal{L} \otimes \mathscr{O}_{E_{\neq j}}) \oplus H^1(E_{j}, \mathcal{L} \otimes \mathscr{O}_{E_{j}}) \to 0
\]
is exact. In particular, if $H^1(E, \mathcal{L}) = 0$, then $H^1(E_{\neq j}, \mathcal{L} \otimes \mathscr{O}_{E_{\neq j}}) = 0$, so
$H^1(E_1, \mathcal{L} \otimes \mathscr{O}_{E_1}) = 0$ by induction.

Conversely, if $H^1(E_1, \mathcal{L} \otimes \mathscr{O}_{E_1}) = 0$, then $H^1(E_{\neq j}, \mathcal{L} \otimes \mathscr{O}_{E_{\neq j}}) = 0$ by induction. Since $\mathcal{L}$ has non-negative degree on all components, it is easy to check that $H^1(E_j, \mathcal{L} \otimes \mathscr{O}_{E_j}) = 0$ as well. The long exact sequence in cohomology now yields
\[
  \Gamma(E_{\neq j}, \mathcal{L} \otimes \mathscr{O}_{E_{\neq j}}) \oplus \Gamma(E_j, \mathcal{L} \otimes \mathscr{O}_{E_j}) \to k(q^{(j)}) \to H^1(E, \mathscr{L}) \to 0.
\]
Applying Lemma \ref{lem:genus_zero_nodal_section_values} to $E_j$ implies that the first map is surjective, and we are done.
\end{proof}

\subsection{Flatness and commutativity with base change}

In this section we show that $\ol{U}$ is flat over $S$ and the formation of $\ol{U}$ commutes with base change in $S$. It is interesting how
closely related the two questions are.

\begin{lemma} \label{lem:truncate_flat_complex}
Let
\[
  C^\bullet : 0 \to C^0 \to \cdots \to C^n \to 0
\]
be a cochain complex of flat $A$-modules so that $H^i(C^\bullet) = 0$ for $i > j$. Then $C^\bullet$ is quasi-isomorphic
to its truncation $\tau_{\leq j}C^\bullet$, and $\tau_{\leq j}C^\bullet$ is a cochain complex of flat $A$-modules.
\end{lemma}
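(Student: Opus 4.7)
The plan is to handle the two claims in sequence. First, recall that the good truncation $\tau_{\leq j}C^\bullet$ is the complex that agrees with $C^\bullet$ in degrees $< j$, has $\ker(d^j)$ in degree $j$, and is zero in higher degrees, with inclusion map $\tau_{\leq j}C^\bullet \hookrightarrow C^\bullet$ that is the identity in degrees $<j$ and the inclusion $\ker(d^j)\hookrightarrow C^j$ in degree $j$. To show this inclusion is a quasi-isomorphism, I would just compute cohomology directly: in degrees $i < j$ the two complexes are identical in the relevant range, in degree $j$ the truncation has cohomology $\ker(d^j)/\mathrm{im}(d^{j-1}) = H^j(C^\bullet)$, and in degrees $i > j$ both complexes have vanishing cohomology (for $\tau_{\leq j}C^\bullet$ this is immediate; for $C^\bullet$ this is the hypothesis). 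This is standard and takes only a sentence or two.

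The real content is the flatness of $\ker(d^j)$. Since $H^i(C^\bullet) = 0$ for $i > j$, the tail
\[
  0 \to \ker(d^j) \to C^j \xrightarrow{d^j} C^{j+1} \xrightarrow{d^{j+1}} \cdots \to C^{n-1} \xrightarrow{d^{n-1}} C^n \to 0
\]
is an exact sequence of $A$-modules. I would break it into the short exact sequences
\[
  0 \to Z^i \to C^i \to Z^{i+1} \to 0, \quad j \leq i \leq n,
\]
where $Z^j = \ker(d^j)$, $Z^i = \mathrm{im}(d^{i-1}) = \ker(d^i)$ for $j < i \leq n$, and $Z^{n+1} = 0$. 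Then I would argue by descending induction on $i$ from $n$ down to $j$ that $Z^i$ is flat: the base case $Z^{n+1} = 0$ is trivial, and for the inductive step I would invoke the standard lemma that if $0 \to A \to B \to C \to 0$ is short exact with $B$ and $C$ flat then $A$ is flat (which follows from the Tor long exact sequence, since $\Tor_1(M,C) = 0$ and $\Tor_i(M,B) = 0$ for $i \geq 1$ force $\Tor_i(M,A) = 0$ for $i \geq 1$). Applying this to $0 \to Z^i \to C^i \to Z^{i+1} \to 0$ using flatness of $C^i$ (by hypothesis) and $Z^{i+1}$ (by induction) gives flatness of $Z^i$; in particular $Z^j = \ker(d^j)$ is flat, so every term of $\tau_{\leq j}C^\bullet$ is flat.

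There isn't really a serious obstacle here; the main things to get right are the indexing of the truncation and the orientation of the descending induction. I would write out the short-exact-sequence decomposition explicitly so that the reader sees that flatness propagates from right to left along the exact tail, arriving at flatness of $\ker(d^j)$ at the last step.
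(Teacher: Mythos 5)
Your proof is correct and complete: the quasi-isomorphism claim is the standard computation for the good truncation, and the flatness of $\ker(d^j)$ follows exactly as you say, by splitting the exact tail into short exact sequences and propagating flatness from right to left via the two-out-of-three property from the $\Tor$ long exact sequence. The paper omits its proof of this lemma entirely, and your argument is the standard one that was evidently intended.
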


\begin{proof}
Omitted.
\end{proof}

\begin{theorem} \label{thm:pushforward_flat_commutes_with_base_change}
Let
\[
  \xymatrix{
    U_T \ar[r]^g \ar[d]_{\pi_T} & U \ar[d]^{\pi} \\
    T \ar[r]^f  & S
  }
\]
be a cartesian diagram of schemes with $\pi$ quasicompact and separated, and let $\mathscr{F}$ be a quasicoherent sheaf on $U$, flat over $S$. Then
\begin{enumerate}
  \item If $R^q\pi_*\mathscr{F} = 0$, for $q > i$, then the natural map $f^*(R^i\pi_*\mathscr{F}) \to R^i\pi_{T, *}(g^*\mathscr{F})$ is an isomorphism.
  \item If $R^q\pi_*\mathscr{F} = 0$ for $q > 0$, then $\pi_*\mathscr{F}$ is flat over $S$.
\end{enumerate}
\end{theorem}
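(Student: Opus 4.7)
The plan is to prove both parts simultaneously using \v{C}ech cohomology and the truncation lemma (Lemma \ref{lem:truncate_flat_complex}). Since both statements are local on $S$, I would reduce to the case $S = \Spec A$ affine, and similarly take $T = \Spec B$ affine for part (i). Using quasicompactness, I would choose a finite affine open cover $\mathfrak{U} = \{U_\alpha\}$ of $U$; since $\pi$ is separated, every finite intersection $U_{\alpha_0 \cdots \alpha_p}$ is affine, so the \v{C}ech complex $C^\bullet = C^\bullet(\mathfrak{U}, \mathscr{F})$ is a bounded complex whose terms are products of $A$-modules $\mathscr{F}(U_{\alpha_0 \cdots \alpha_p})$. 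Each such term is flat over $A$ because $\mathscr{F}$ is $S$-flat and each intersection is affine. The standard comparison for quasi-coherent sheaves on separated schemes identifies $H^q(C^\bullet) = R^q\pi_*\mathscr{F}$.

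By the hypothesis in (i), $H^q(C^\bullet) = 0$ for $q > i$, so Lemma \ref{lem:truncate_flat_complex} provides a quasi-isomorphism $\tau_{\leq i}C^\bullet \to C^\bullet$ where $\tau_{\leq i}C^\bullet$ is still a bounded complex of flat $A$-modules whose top term is $\ker(d^i)$ and whose top cohomology is $R^i\pi_*\mathscr{F}$. The pullback of $\mathfrak{U}$ along $f$ is a finite affine open cover of $U_T$ whose \v{C}ech complex for $g^*\mathscr{F}$ is $C^\bullet \otimes_A B$; hence $R^q\pi_{T,*}(g^*\mathscr{F}) = H^q(C^\bullet \otimes_A B)$. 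The cone of $\tau_{\leq i}C^\bullet \to C^\bullet$ is a bounded acyclic complex of flat $A$-modules, and tensoring such a complex with $B$ preserves acyclicity (split each differential into short exact sequences and use that extensions of flats are flat), so $\tau_{\leq i}C^\bullet \otimes_A B \to C^\bullet \otimes_A B$ remains a quasi-isomorphism.

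For (i), this yields $R^i\pi_{T,*}(g^*\mathscr{F}) = H^i(\tau_{\leq i}C^\bullet \otimes_A B) = \mathrm{coker}(d^{i-1} \otimes_A B)$. Because $\ker(d^i)$ and $C^{i-1}$ are flat, forming cokernels commutes with tensoring by $B$, so this group equals $H^i(\tau_{\leq i}C^\bullet) \otimes_A B = R^i\pi_*\mathscr{F} \otimes_A B$; one checks that the isomorphism so obtained agrees with the natural base change map. For (ii), apply the same discussion with $i = 0$: the truncation collapses to the single flat $A$-module $\tau_{\leq 0}C^\bullet = \ker(d^0) = H^0(C^\bullet) = \Gamma(S, \pi_*\mathscr{F})$, so the quasi-coherent sheaf $\pi_*\mathscr{F}$ corresponds to a flat $A$-module and is therefore flat over $S$.

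The main subtlety is not conceptual but bookkeeping: one must verify that the base change isomorphism constructed from the \v{C}ech perspective coincides with the canonical map $f^*R^i\pi_*\mathscr{F} \to R^i\pi_{T,*}(g^*\mathscr{F})$ arising from the adjunction, and must confirm that preservation of acyclicity under $\otimes_A B$ for bounded flat complexes really follows from splitting into short exact sequences $0 \to Z^p \to C^p \to B^{p+1} \to 0$ and propagating flatness from the top downward. Once these routine verifications are in place, the proof is essentially a direct application of Lemma \ref{lem:truncate_flat_complex}.
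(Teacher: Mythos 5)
Your proposal is correct and follows essentially the same route as the paper: reduce to affine $S$ and $T$, compute with the \v{C}ech complex of a finite affine cover with affine intersections (a bounded complex of flat $A$-modules whose formation commutes with $-\otimes_A B$), apply Lemma \ref{lem:truncate_flat_complex} to truncate at degree $i$, and use right-exactness of $-\otimes_A B$ to identify the top cohomology of the truncated complex after base change, with part (ii) falling out of the $i=0$ case. The cone argument you sketch for preservation of quasi-isomorphisms between bounded flat complexes under $-\otimes_A B$ is the same fact the paper invokes, just phrased slightly differently.
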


\begin{proof}

The assertion is local on $S$ and $T$, so we may assume that $S = \Spec A$, $T = \Spec B$. Then what we have to show is
\begin{enumerate}
  \item If $H^q(U, \mathscr{F}) = 0$ for $q > i$, then $H^i(U, \mathscr{F}) \otimes_A B \to H^i(U_T, \mathscr{F} \otimes_A B)$ is an isomorphism.
  \item If $H^q(U, \mathscr{F}) = 0$ for $q > 0$, then $H^0(U, \mathscr{F})$ is a flat $A$-module. 
\end{enumerate}

Since $\pi$ is quasicompact and separated, we can find a cover $\mathfrak{U} = \{ U_1, \ldots, U_n \}$ of $U$ by open affines with affine intersections. Then $H^i(U, \mathscr{F})$ is computed by the \Cech complex
\[
  \check{C}^p(\mathscr{F}, \mathfrak{U}) = \bigoplus_{i_1 < i_2 < \cdots < i_p} \Gamma(U_{i_1} \cap \cdots \cap U_{i_p}, \mathscr{F})
\]
in the sense that $H^i(\check{C}^\bullet(\mathfrak{U}, \mathscr{F})) \cong H^i(U, \mathscr{F})$. We note that since $\mathscr{F}$ is $S$-flat, $\check{C}(\mathfrak{U}, \mathscr{F})$ is a complex of flat $A$-modules.

Since $f$ is affine, the pullback cover $\mathfrak{U} \times_S T = \{ U_1 \times_S T, \ldots, U_n \times_S T \}$ of $U_T$ is again an open affine cover with affine intersections, so $H^i(U_T, \mathscr{F} \otimes_A B)$ is computed
by the complex $\check{C}(\mathfrak{U} \times_S T, \mathscr{F} \otimes_A B) \cong \check{C}(\mathfrak{U}, \mathscr{F}) \otimes_A B$.

Fix an integer $i \geq 0$, and assume $H^q(U, \mathscr{F}) = 0$ for $q > i$. By Lemma \ref{lem:truncate_flat_complex}, there is a subcomplex $C^\bullet$ of $\check{C}(\mathfrak{U}, \mathscr{F})$ with amplitude in $[0, i]$ so that the inclusion
is a quasi-isomorphism and each $C^p$ is a flat $A$-module, $p = 0, \ldots, i$. Since $C^\bullet$ and $\check{C}(\mathfrak{U}, \mathscr{F})$ are quasi-isomorphic complexes of flat $A$-modules, $C^\bullet \otimes_A B$ is quasi-isomorphic
to $\check{C}(\mathfrak{U}, \mathscr{F}) \otimes_A B \cong \check{C}(\mathfrak{U} \times_S T, \mathscr{F} \otimes_A B)$. Since $- \otimes_A B$ is right exact,
\begin{align*}
   H^i(U, \mathscr{F}) \otimes B &= H^i(C^\bullet) \otimes_A B \\
     &\cong H^i(C^\bullet \otimes_A B) \\
     &\cong H^i(\check{C}(\mathfrak{U} \times_S T, \mathscr{F} \otimes_A B)) \\
     &\cong H^i(U_T, \mathscr{F} \otimes_A B),
\end{align*}
which establishes (i).

For (ii), we note that if $i = 0$, $H^0(U, \mathscr{F}) = H^0(C^\bullet) = C^0$ is $A$-flat.
\end{proof}

\begin{lemma} \label{lem:2g_minus_2_plus_s}
Suppose $C$ is a smooth curve of genus $g$. Let $p_1, \ldots, p_s$ be distinct closed points of $C$, and let $\mathscr{L}$ be a locally free sheaf of rank one on $C$ with $\mathrm{deg}(\mathscr{L}) > 2g - 2 + s$.
Then the evaluation map $\Gamma(C, \mathscr{L}) \to \bigoplus_{i = 1}^s k(p_s)$ is surjective.
\end{lemma}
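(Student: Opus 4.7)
The plan is to use the standard twist-by-the-vanishing-ideal short exact sequence and bound $H^1$ via Serre duality. Writing $D = p_1 + \cdots + p_s$, I would start from the exact sequence
\[
0 \to \mathscr{L}(-D) \to \mathscr{L} \to \bigoplus_{i=1}^s k(p_i) \to 0,
\]
obtained by tensoring $\mathscr{L}$ with $0 \to \mathscr{O}_C(-D) \to \mathscr{O}_C \to \mathscr{O}_D \to 0$ and using that $\mathscr{O}_D \cong \bigoplus_i k(p_i)$ since the $p_i$ are distinct reduced points. Taking the long exact sequence in cohomology, the evaluation map $\Gamma(C, \mathscr{L}) \to \bigoplus_i k(p_i)$ is surjective as soon as $H^1(C, \mathscr{L}(-D)) = 0$.

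To establish this vanishing, I would invoke Serre duality on the smooth curve $C$:
\[
H^1(C, \mathscr{L}(-D)) \cong H^0(C, \omega_C \otimes \mathscr{L}^{-1}(D))^\vee.
\]
The degree of the sheaf on the right is
\[
\deg \omega_C - \deg \mathscr{L} + s = (2g - 2) - \deg \mathscr{L} + s,
\]
which is strictly negative by the hypothesis $\deg \mathscr{L} > 2g - 2 + s$. Since a line bundle of negative degree on a smooth (hence irreducible) projective curve has no nonzero global sections, $H^0(C, \omega_C \otimes \mathscr{L}^{-1}(D)) = 0$, and therefore $H^1(C, \mathscr{L}(-D)) = 0$.

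There is no real obstacle here; the entire argument is a routine application of Serre duality plus the negative-degree vanishing on a smooth curve. The only thing worth being careful about is verifying that the cokernel sheaf really is $\bigoplus_i k(p_i)$ (which uses that the $p_i$ are distinct so that the skyscrapers at different points do not interact), and that Serre duality applies, which requires $C$ smooth and projective as assumed.
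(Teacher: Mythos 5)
Your proof is correct and follows the same route as the paper: the paper's proof uses exactly the short exact sequence $0 \to \mathscr{L}(-p_1 - \cdots - p_s) \to \mathscr{L} \to \bigoplus_i k(p_i) \to 0$ and the vanishing of $H^1(C, \mathscr{L}(-p_1-\cdots-p_s))$, which it leaves unjustified but which follows from the Serre duality argument you spell out. You have simply filled in the degree computation the paper omits.
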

\begin{proof}
We have an exact sequence
\[
  0 \to \mathscr{L}(-p_1 - \cdots - p_s) \to \mathscr{L} \to \bigoplus_{i = 1}^s k(p_i) \to 0.
\]
Taking the associated long exact sequence and noting $H^1(C, \mathscr{L}(-p_1 - \cdots - p_s)) = 0$ gives the result.
\end{proof}

\begin{proposition} \label{prop:no_coh_minus_lambda_bar}
Assume that we are in the standard situation. For $q \geq 1$,
\[
  H^q(U, \mathscr{O}_U(-\ol{\lambda})) = 0.
\]
\end{proposition}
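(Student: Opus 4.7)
The plan is to express $H^q(U, \mathscr{O}_U(-\ol{\lambda}))$ as a filtered colimit of cohomology groups on the proper scheme $C$, then reduce the problem to a fiberwise vanishing statement that uses the acyclicity of $\ol{\lambda}$ from the definition of a mesa curve. Set $D = \sigma_1 + \cdots + \sigma_h$, so that $U = C - \mathrm{Supp}(D)$. Since the open immersion $j : U \hookrightarrow C$ is the complement of a relative Cartier divisor, it is affine, so $R^p j_* = 0$ for $p \geq 1$ and $j_* \mathscr{O}_U(-\ol{\lambda}) = \colim_n \mathscr{O}_C(-\ol{\lambda} + nD)$. Combined with the fact that cohomology on the Noetherian scheme $C$ commutes with filtered colimits, this gives
\[
  H^q(U, \mathscr{O}_U(-\ol{\lambda})) = \colim_n H^q(C, \mathscr{O}_C(-\ol{\lambda} + nD)).
\]

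For $q \geq 2$ the fibers of $\pi$ have dimension $1$, so $H^q(C_{\ol{s}}, -)$ vanishes on coherent sheaves; cohomology and base change for the proper map $\pi$ yields $R^q\pi_* = 0$, and since $S$ is affine $H^q(C, -) = 0$. For $q = 1$, the vanishing of $R^2\pi_*$ lets one invoke cohomology and base change to identify $R^1\pi_*\mathscr{O}_C(-\ol{\lambda} + nD) \otimes k(\ol{s}) \cong H^1(C_{\ol{s}}, \mathscr{O}_{C_{\ol{s}}}(-\ol{\lambda} + nD))$. Once the right-hand side is shown to vanish for each $\ol{s}$ and all sufficiently large $n$, Nakayama's lemma forces $(R^1\pi_*\mathscr{O}_C(-\ol{\lambda} + nD))_{\ol{s}} = 0$ for such $n$; since stalks commute with filtered colimits, $\colim_n R^1\pi_*\mathscr{O}_C(-\ol{\lambda} + nD) = 0$ and therefore $\colim_n H^1(C, \mathscr{O}_C(-\ol{\lambda} + nD)) = 0$.

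The heart of the argument is the fiberwise vanishing $H^1(C_{\ol{s}}, \mathscr{O}_{C_{\ol{s}}}(-\ol{\lambda} + nD)) = 0$ for $n$ sufficiently large. Decompose $C_{\ol{s}} = E' \cup Z$, where $E' = E_{\ol{s}}$ and $Z = \overline{C_{\ol{s}} - E'}$, meeting at finitely many nodes $q_1, \ldots, q_m$. Tensoring the normalization exact sequence
\[
  0 \to \mathscr{O}_{C_{\ol{s}}} \to \mathscr{O}_{E'} \oplus \mathscr{O}_Z \to \bigoplus_{i=1}^m k(q_i) \to 0
\]
with the line bundle $\mathscr{O}_{C_{\ol{s}}}(-\ol{\lambda} + nD)$ and passing to the long exact sequence reduces the vanishing to three ingredients: the acyclicity hypothesis (Definition \ref{def:mesa_curve}) gives $H^1(E', \mathscr{O}_{E'}(-\ol{\lambda})) = 0$, which is preserved upon adding the effective divisor $nD|_{E'}$; each component of $Z$ meets at least one $\sigma_i$ by the standard situation, so $\mathscr{O}_Z(-\ol{\lambda}|_Z + nD|_Z)$ becomes arbitrarily ample on every component as $n$ grows, giving $H^1(Z, -) = 0$ via Lemma \ref{lem:2g_minus_2_plus_s} applied to the normalization of $Z$; and the same lemma provides surjectivity of the evaluation map $H^0(Z, -) \to \bigoplus_i k(q_i)$ for $n$ large. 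The main obstacle is precisely this fiberwise assembly: the acyclicity hypothesis is essential for controlling the components of $E'$ on which $-\ol{\lambda}$ forces nonpositive degree and on which $D$ contributes nothing, and it must be combined with Serre vanishing on $Z$ through the normalization sequence to propagate the vanishing to all of $C_{\ol{s}}$.
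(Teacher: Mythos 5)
Your proof is correct and follows essentially the same route as the paper's: reduce to $\colim_n H^q(C, \mathscr{O}_C(-\ol{\lambda}+nD))$, use cohomology and base change to pass to geometric fibers, and kill $H^1$ on each fiber via the partial normalization sequence together with acyclicity on $E_{\ol{s}}$ and Lemma \ref{lem:2g_minus_2_plus_s} on the components meeting the sections. The only (harmless) deviations are that you get $R^q\pi_* = 0$ for $q \ge 2$ directly from the fiber-dimension bound rather than by the paper's descending induction from degree $\dim S + 2$ (which is where the paper uses finite-dimensionality of $S$), and you sidestep the need for an $n$ uniform over all fibers by passing to stalks of the colimit sheaf, whereas the paper instead bounds the number of nodes to choose $n$ uniformly.
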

\begin{proof}
Let $\Sigma$ be the sum of the divisors of the sections $\sigma_i$.
Note that
\[
  \mathscr{O}_U(-\ol{\lambda}) \cong \colim_{n \in \mathbb{N}} \mathscr{O}_C(-\ol{\lambda} + n\Sigma).
\]
Affine locally this is the isomorphism
$\Gamma(C, \mathscr{O}_C)[f^{-1}] \cong \colim_{n \in \mathbb{N}} f^{-n}\Gamma(C, \mathscr{O}_C)$
where $f$ is a local equation for $\Sigma$.

Since cohomology commutes with colimits, it is enough to show that $H^q(C, \mathscr{O}_C(-\ol{\lambda} + n\Sigma)) = 0$ for all sufficiently large $n$.
We have assumed that $S$ is finite dimensional; suppose it has dimension $d$. Then $C$ is $d + 1$ dimensional, so $H^{d+2}(C, \mathscr{O}_C(-\ol{\lambda} + n\Sigma)) = 0$.
We now work by descending induction on $q$.

Suppose $H^{q + 1}(C, \mathscr{O}_C(-\ol{\lambda} + n\Sigma)) = 0$ for all sufficiently large $n$. Then by Theorem \ref{thm:pushforward_flat_commutes_with_base_change}, cohomology commutes with base change in degree $q$
for $\mathscr{O}_C(-\ol{\lambda} + n\Sigma)$. Since $\pi$ is proper and $\mathscr{O}_C(-\ol{\lambda} + n\Sigma)$ is coherent, $H^q(C, \mathscr{O}_C(-\ol{\lambda} + n\Sigma))$ is finitely generated. Let $\ol{t}$ be any geometric point of $S$,
and let $C_{\ol{t}}, E_{\ol{t}}$ be the pullbacks of $C$ and $E$ to $\ol{t}$ respectively. By Nakayama's lemma, it is enough to show that
\[
  H^q(C_{\ol{t}}, \mathscr{O}_{C_{\ol{t}}}(-\ol{\lambda} + n\Sigma)) = 0.
\]
for all $n \geq N$, where $N$ is independent of $\ol{t}$.

If $q \geq 2$, then $H^q(C_{\ol{t}}, \mathscr{O}_{C_{\ol{t}}}(-\ol{\lambda} + n\Sigma))$ vanishes since $C_{\ol{t}}$ has dimension 1.

If $q = 1$, let $Z_1, \ldots, Z_r$ be the components of $C_{\ol{t}}$ not contained in $E_{\ol{t}}$, and let $p_1, \ldots, p_s$ be the intersection points of the $Z_i$s with each other and with $E_{\ol{t}}$. Consider the short exact sequence
\[
  0 \to \mathscr{O}_{C_{\ol{t}}} \to \mathscr{O}_{E_{\ol{t}}} \oplus \bigoplus_{i = 1}^r \mathscr{O}_{Z_i} \to \bigoplus_{i = 1}^s k(p_i) \to 0.
\]
Twisting by $-\ol{\lambda} + n\Sigma$ and then taking the associated long exact sequence gives
\[
  H^0(E_{\ol{t}}, \mathscr{O}_{E_{\ol{t}}}(-\ol{\lambda})) \oplus \bigoplus_{i = 1}^r H^0(Z_i, \mathscr{O}_{Z_i}(-\ol{\lambda} + n\Sigma|_{Z_i})) \to k^s \to H^1(C_{\ol{t}}, \mathscr{O}_{C_{\ol{t}}}(-\ol{\lambda} + n\Sigma)).
\]
By Lemma \ref{lem:etale_local_reductions}(iv), $\mathscr{O}_{Z_i}(-\ol{\lambda} + n\Sigma|_{Z_i})$ has degree at least $n$. Applying Lemma \ref{lem:2g_minus_2_plus_s} to each $Z_i$, we can make the first map surjective by choosing $n$ greater than $2g - 2 + s$.
Looking a little further to the right in the same long exact sequence we now have
\[
  0 \to H^1(C_{\ol{t}}, \mathscr{O}_{C_{\ol{t}}}(-\ol{\lambda} + n\Sigma)) \to H^1(E_{\ol{t}}, \mathscr{O}_{E_{\ol{t}}}(-\ol{\lambda})) \oplus \bigoplus_{i = 1}^m H^1(Z_i, \mathscr{O}_{Z_i}(-\ol{\lambda} + n\Sigma|_{Z_i}))
\]
Choosing $n$ larger than $2g - 2 + s$, we can also make $H^1(Z_i, \mathscr{O}_{Z_i}(- \ol{\lambda} + n\Sigma|_{Z_i}))$ vanish.
By acyclicity, $H^1(E_{\ol{t}}, \mathscr{O}_{E_{\ol{t}}}(-\ol{\lambda}))$ is also zero, so $H^1(C_{\ol{t}}, \mathscr{O}_{C_{\ol{t}}}(-\ol{\lambda} + n\Sigma)) = 0$. The number of nodes in a fiber is bounded above, so we may choose a lower bound on $n$ uniformly for all $\ol{t}$, and we are done.
\end{proof}

\begin{proposition} \label{prop:bbar_sequence_is_nice}
Assume we are in the standard situation. Then
\begin{enumerate}
  \item the $\ol{B}$-sequence
  \[
   0 \to \Gamma(S, \mathscr{O}_S(-\rho)) \to B(U) \to \ol{B}(U) \to 0
  \]
  is short exact.
  \item The terms of the $\ol{B}$-sequence are each flat over $S$.
  \item If $f : T = \Spec R \to S$ is a morphism of affine Noetherian, finite-dimensional schemes, we may pull back the data of the standard
    situation to obtain $\pi_T: C_T \to T$, $\ol{\lambda}_T$, $\rho_T$, sections $\sigma_{T,1}, \ldots, \sigma_{T, h}$, and their complement $U_T$. These data
   are still in the standard situation, so we may form $\ol{B}(U_T)$.  Then there is a natural isomorphism of short exact sequences of $R$-modules:
  \begin{equation} \label{eq:bbar_sequence_pullback}
    \xymatrix{
      0 \ar[r] & \Gamma(S, \mathscr{O}_S(-\rho)) \otimes_A R \ar[r] \ar[d] & B(U) \otimes_A R \ar[r] \ar[d] & \ol{B}(U) \otimes_A R \ar[r] \ar[d] & 0 \\
      0 \ar[r] & \Gamma(T, \mathscr{O}_T(-\rho_T)) \ar[r] & B(U_T) \ar[r] & \ol{B}(U_T) \ar[r] & 0.
    }
  \end{equation}
\end{enumerate}
\end{proposition}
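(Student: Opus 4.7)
The plan is to bootstrap from the cohomology vanishing in Proposition~\ref{prop:no_coh_minus_lambda_bar} via Theorem~\ref{thm:pushforward_flat_commutes_with_base_change}, and then to reduce exactness and flatness of the quotient to a fiberwise injectivity statement. Since $U$ is open in the projective $S$-scheme $C$, the map $\pi|_U: U \to S$ is quasicompact and separated. Combined with the vanishing $H^q(U, \mathscr{O}_U(-\ol{\lambda})) = 0$ for $q \geq 1$, Theorem~\ref{thm:pushforward_flat_commutes_with_base_change} gives that $\Gamma(U, \mathscr{O}_U(-\ol{\lambda}))$ is $A$-flat and that $\Gamma(U, \mathscr{O}_U(-\ol{\lambda})) \otimes_A R \cong \Gamma(U_T, \mathscr{O}_{U_T}(-\ol{\lambda}_T))$ naturally. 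Since $\Gamma(S, \mathscr{O}_S) = A$ is trivially flat and base-change-stable, so too is $B(U)$. The module $\Gamma(S, \mathscr{O}_S(-\rho))$ is global sections of a line bundle on the affine scheme $S$, hence a rank-one projective $A$-module, so also flat and base-change-stable.

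Next, I would check that the first map of the $\ol{B}$-sequence is fiberwise injective. Using the base-change isomorphisms just obtained, for each geometric point $\ol{s}$ of $S$ the fiberwise map takes the form
\[
  k(\ol{s}) \longrightarrow k(\ol{s}) \oplus \Gamma(U_{\ol{s}}, \mathscr{O}_{U_{\ol{s}}}(-\ol{\lambda}_{\ol{s}})), \quad g \longmapsto ((\rho - \ol{\lambda})(g), -\rho(g)).
\]
If $\rho_{\ol{s}} = 0$ in $\ol{M}_{S, \ol{s}}$ (equivalently, $E_{\ol{s}} = \emptyset$), then $\rho$ lifts to a unit in $\mathscr{O}_{S, \ol{s}}$ and the second coordinate is multiplication by a unit, hence injective. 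If $\rho_{\ol{s}} \neq 0$, the second coordinate vanishes, but on the top $F_{\ol{s}} \subseteq E_{\ol{s}} \subseteq U_{\ol{s}}$ of the mesa the piecewise linear function $\rho - \ol{\lambda}$ is identically zero by definition of $F$, so $\rho - \ol{\lambda}$ lifts to a unit of $\mathscr{O}_C$ along $F_{\ol{s}}$; consequently multiplication by $(\rho - \ol{\lambda})$ is an isomorphism of line bundles on $F_{\ol{s}}$ and carries any nonzero $g$ to a nowhere-vanishing section on $F_{\ol{s}}$, in particular to a nonzero element of $\Gamma(U_{\ol{s}}, \mathscr{O}_{U_{\ol{s}}}(-\ol{\lambda}_{\ol{s}}))$.

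Finally, I would deduce exactness, flatness, and base change simultaneously. After Zariski-localizing so that $\Gamma(S, \mathscr{O}_S(-\rho)) \cong A$, the first map becomes multiplication by $q := \phi(1) \in B(U)$; fiberwise injectivity together with flatness of $B(U)$ shows $\mathrm{Ann}_A(q) = 0$ (if $aq = 0$ with $a \notin \mathfrak{p}$, then $q$ dies in $B(U) \otimes k(\ol{s})$ for any $\ol{s}$ over $\mathfrak{p}$, a contradiction), proving (i). The same fiberwise injectivity applied with $A/\mathfrak{p}$ in place of $k(\ol{s})$, combined with the $\mathrm{Tor}$ long exact sequence for $0 \to \Gamma(S, \mathscr{O}_S(-\rho)) \to B(U) \to \ol{B}(U) \to 0$ and flatness of $B(U)$, forces $\mathrm{Tor}^A_1(\ol{B}(U), A/\mathfrak{p}) = 0$ for every prime $\mathfrak{p}$, so by the local flatness criterion $\ol{B}(U)$ is flat, giving (ii). With all three terms flat, tensoring the sequence with $R$ preserves exactness, and naturality of the base-change isomorphisms on the two leftmost terms induces by the five lemma the natural isomorphism $\ol{B}(U) \otimes_A R \cong \ol{B}(U_T)$ and the isomorphism of short exact sequences required for (iii). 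The main obstacle is the fiberwise analysis of the two regimes $\rho_{\ol{s}} = 0$ and $\rho_{\ol{s}} \neq 0$, where the log-geometric content—specifically the defining property of the top $F$ of a mesa—is what makes the map injective on fibers.
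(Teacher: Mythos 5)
Your reduction of the first two terms to Proposition \ref{prop:no_coh_minus_lambda_bar} and Theorem \ref{thm:pushforward_flat_commutes_with_base_change} matches the paper, and your fiberwise analysis of the map $g \mapsto ((\rho - \ol{\lambda})(g), -\rho(g))$ --- splitting into the regimes $\rho_{\ol{s}} = 0$ and $\rho_{\ol{s}} \neq 0$ and using that $\rho - \ol{\lambda}$ acts by a unit along the top $F$ --- isolates exactly the right geometric content. But the step where you promote fiberwise injectivity to injectivity over $A$ has a genuine hole. Your parenthetical argument shows only that $\mathrm{Ann}_A(q) \subseteq \mathfrak{p}$ for every prime $\mathfrak{p}$, i.e.\ $\mathrm{Ann}_A(q) \subseteq \mathrm{nil}(A)$: if $a$ is a nonzero \emph{nilpotent} annihilating $q$, there is no prime with $a \notin \mathfrak{p}$ and your contradiction never gets off the ground. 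This is not a corner case here --- the entire point of the construction is to behave well over non-reduced (e.g.\ Artinian) bases, as the tacnode example over $k[t]/(t^2)$ makes clear. The flatness of $B(U)$, which you invoke but never actually use in that parenthetical, is what closes the gap: by the equational criterion for flatness, $aq = 0$ forces $q \in \mathrm{Ann}_A(a)\cdot B(U)$, and choosing any prime $\mathfrak{p} \supseteq \mathrm{Ann}_A(a)$ (possible whenever $a \neq 0$) gives $q \otimes k(\mathfrak{p}) = 0$, contradicting fiberwise injectivity. With that repair, your subsequent deductions go through: $\Tor_1^A(\ol{B}(U), A/\mathfrak{p}) = 0$ for all primes (using torsion-freeness of $B(U) \otimes A/\mathfrak{p}$ over the domain $A/\mathfrak{p}$) plus d\'evissage over the Noetherian ring $A$ gives flatness, and the five lemma gives (iii).

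For comparison, the paper avoids the fiberwise argument entirely: after \'etale localization it chooses a section $\sigma : S \to C$ through the locus where $\ol{\lambda} = \rho$ and exhibits $(f,c) \mapsto \sigma^*(f)$ as an explicit \emph{splitting} of $\Gamma(S, \mathscr{O}_S(-\rho)) \to B(U)$, which gives injectivity over an arbitrary (in particular non-reduced) base with no flatness input; it then proves (iii) first via the universal property of the cokernel and deduces (ii) by applying (iii) with $R = A/I$. The splitting is the cleaner route precisely because it sidesteps the nilpotent issue; your route is salvageable but needs the equational criterion made explicit.
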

\begin{proof}
Let us begin with (i). We have to show that the map $\Gamma(S, \mathscr{O}_S(-\rho)) \to B(U)$ is injective; the rest of the sequence
is exact by definition. Recall that $B(U) = \Gamma(U, \mathscr{O}_C(-\ol{\lambda})) \oplus \Gamma(S, \mathscr{O}_S)$ and the first map
$\Gamma(S, \mathscr{O}_S(-\rho)) \to B(U)$ is given by $d \mapsto ( (\rho - \ol{\lambda})(d), -\rho(d) )$.

We always have (iii) for flat maps $T \to S$, so we may work \'etale locally on $S$. Therefore we may assume that there is a section
$\sigma : S \to C$ of $\pi$ so that $\sigma$ goes through the locus in $C$ where $\ol{\lambda} = \rho$. (For each geometric point $\ol{s}$ of $S$,
choose a neighborhood with a uniform set of charts, then, localizing if necessary, choose a section through some point
of the core of $E$ in the fiber over $\ol{s}$. This section will have the required property.)

The map $\sigma$ is in fact a map of log schemes, since the log structure of $C$ along the image of $\sigma$ is that pulled back from
$S$. Now, consider the composite
\[
  \pi^{-1}\mathscr{O}_S(-\rho) \to \mathscr{O}_C(-\rho) \overset{\rho - \ol{\lambda}}{\to} \mathscr{O}_C(-\ol{\lambda}).
\]
Taking global sections on $U$ and considering the pullback maps induced by $\sigma$ gives us a commutative square
\[
  \xymatrix{
   \Gamma(U, \pi^{-1}\mathscr{O}_S(-\rho)) \ar[r] \ar[d]_{\sigma^*} & \Gamma(U, \mathscr{O}_C(-\ol{\lambda})) \ar[d]^{\sigma^*} \\
   \Gamma(S, \mathscr{O}_S(-\rho)) \ar@{=}[r]  & \Gamma(S, \mathscr{O}_S(-\rho)).
   }
\]
where the left vertical arrow is the canonical isomorphism, and the lower horizontal arrow is the identity, since it is induced by the zero element
of $\ol{M}_S$. The composite map from the lower left to upper left to upper right corner is the map
from $\Gamma(S, \mathscr{O}_S(-\rho))$ to the first coordinate of $B(U)$. Therefore $(f,c) \mapsto \sigma^*(f)$ is a splitting of the
$\ol{B}$-sequence, which implies (i).

Now consider (iii). We have natural isomorphisms $\Gamma(S, \mathscr{O}_S(-\rho)) \otimes_A R \to \Gamma(S, \mathscr{O}_T(-\rho_T))$
and $\Gamma(S, \mathscr{O}_S) \otimes_A R \to \Gamma(S, \mathscr{O}_S)$. By Proposition \ref{prop:no_coh_minus_lambda_bar} and
Theorem \ref{thm:pushforward_flat_commutes_with_base_change}, the base change map
$\Gamma(U, \mathscr{O}_C(-\ol{\lambda})) \otimes_A R \to \Gamma(U_T, \mathscr{O}_{C_T}(-\ol{\lambda}_T))$ is also an isomorphism.
This gives us a solid diagram
\[
  \xymatrix{
   0 \ar@{-->}[r] & \Gamma(S, \mathscr{O}_S(-\rho)) \otimes_A R \ar[r] \ar[d] & B(U) \otimes_A R \ar[r] \ar[d] & \ol{B}(U) \otimes_A R \ar[r] \ar@{-->}[d] & 0 \\
   0 \ar[r] & \Gamma(T, \mathscr{O}_T(-\rho_T)) \ar[r] & B(U_T) \ar[r] & \ol{B}(U_T) \ar[r] & 0
  }
\]
in which the solid vertical arrows are isomorphisms and each row is exact. The solid square commutes since the maps between sheaves associated to sections of $\ol{M}_S$
commute with pullback. The universal property of the cokernel gives us a unique isomorphism $\ol{B}(U) \otimes_A R \to \ol{B}(U_T)$ so that the diagram remains commutative. Since all vertical
arrows are isomorphisms, we may fill in a zero in the upper left. This proves (iii).

This leaves (ii). It is clear that $\Gamma(S, \mathscr{O}_S(-\rho))$ and $\Gamma(S, \mathscr{O}_S)$ are each flat over $A$.
By Proposition \ref{prop:no_coh_minus_lambda_bar} and Theorem \ref{thm:pushforward_flat_commutes_with_base_change},
$\Gamma(U, \mathscr{O}_C(-\ol{\lambda}))$ is also $A$-flat, so the $\ol{B}$-sequence is a flat resolution of $\ol{B}(U)$.
Now, the possible rings $R$ in (iii) include the rings $A/I$ for all ideals $I$ of $A$, so the top row of diagram (\ref{eq:bbar_sequence_pullback})
shows $\Tor^A_1(A/I, \ol{B}(U)) = 0$ for all $I$. It follows $\ol{B}(U)$ is flat over $A$, which concludes the proof.
\end{proof}

\begin{corollary}
In the standard situation, $\ol{\pi} : \ol{U} \to S$ is flat.
\end{corollary}

\begin{corollary} \label{cor:U_bar_commutes_base_change}
Assume we are in the standard situation and let $f : T = \Spec R \to S$ be a morphism of affine, Noetherian, finite-dimensional schemes. Pull
back the
data of the standard situation along $f$ to obtain $\pi_T : C_T \to T$, $U_T$, $E_T$, $\rho_T$, and $\sigma_{T,1}, \ldots, \sigma_{T, h}$ over
$T$. These data are again in the standard situation, so we may apply Definition \ref{def:contraction} to obtain a triangle
\[
  \xymatrix{
    U_T \ar[r] \ar[rd] & \ol{U_T} \ar[d] \\
    & T.
  }
\]

Then there is a uniquely specified isomorphism $\phi_{S,T} : \ol{U_T} \to \ol{U} \times_S T$ so that
\[
  \xymatrix{
    U_T \ar[rdd] \ar[rr] \ar[rd]& & \ol{U} \times_S T  \ar[ldd] \\
     & \ol{U_T} \ar[d] \ar^{\phi_{S,T}}[ru]\\
    & T
  }
\]
commutes. Moreover, these isomorphisms satisfy the cocycle condition in the sense that if $g : W \to T$ is a second
morphism of affine, Noetherian, finite-dimensional schemes,
\[
  \phi_{S,W} = \phi_{T,W} \circ \phi_{S,T}|_{W}.
\]
\end{corollary}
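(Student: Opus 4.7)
The plan is to obtain the corollary essentially by applying $\Spec$ to Proposition \ref{prop:bbar_sequence_is_nice}(iii). More precisely, part (iii) of that proposition produces a natural isomorphism of $R$-algebras $\ol{B}(U) \otimes_A R \to \ol{B}(U_T)$ fitting into the commutative diagram (\ref{eq:bbar_sequence_pullback}). Taking $\Spec$, this yields an isomorphism of affine $T$-schemes
\[
  \phi_{S,T} : \ol{U_T} = \Spec \ol{B}(U_T) \longrightarrow \Spec(\ol{B}(U) \otimes_A R) = \ol{U} \times_S T.
\]
So the first step is simply to record this map and verify it is an isomorphism of schemes over $T$, which is immediate from the fact that the algebra isomorphism of Proposition \ref{prop:bbar_sequence_is_nice}(iii) is an isomorphism of $R$-algebras.

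Second, I would verify that the resulting triangle commutes, i.e.\ that $\phi_{S,T} \circ \tau_T = (\tau \times \id_T)$ as maps $U_T \to \ol{U} \times_S T$. This reduces to the commutativity of the square
\[
  \xymatrix{
    \ol{B}(U) \otimes_A R \ar[r]^{\phi_{S,T}^\sharp} \ar[d] & \ol{B}(U_T) \ar[d] \\
    \Gamma(U, \mathscr{O}_C) \otimes_A R \ar[r] & \Gamma(U_T, \mathscr{O}_{C_T})
  }
\]
of ring maps, where the vertical maps are the algebra maps defining $\tau$ and $\tau_T$. Each of the two constituent summands $\Gamma(U, \mathscr{O}_C(-\ol{\lambda}))$ and $\Gamma(S, \mathscr{O}_S)$ of $B(U)$ maps naturally to $\Gamma(U, \mathscr{O}_C)$ (via $\ol{\lambda}$ and $\pi^\sharp$ respectively), and both constructions commute with pullback along $f$ by functoriality of the associated line bundle construction and of pushforward. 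So the square commutes, and descending to the quotient $\ol{B}(U)$ gives the desired compatibility with $\tau$. Compatibility with the map to $T$ is automatic since $\phi_{S,T}$ is a map of $T$-schemes.

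Third, uniqueness of $\phi_{S,T}$ follows because the map $U_T \to \ol{U_T}$ has schematically dense image (indeed, $U_T \to \ol{U_T}$ is an isomorphism over $U_T - E_T$ once Proposition \ref{prop:iso_outside_E} is in hand, but more elementarily $\phi_{S,T}$ is already pinned down as the unique map induced by the universal property of the cokernel in Proposition \ref{prop:bbar_sequence_is_nice}(iii)).

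Finally, for the cocycle condition $\phi_{S,W} = \phi_{T,W} \circ \phi_{S,T}|_W$, I would unwind both sides into ring maps and appeal again to Proposition \ref{prop:bbar_sequence_is_nice}(iii): applied with base change $S \to T \to W$, the two composites of natural isomorphisms $\ol{B}(U) \otimes_A C \to \ol{B}(U_T) \otimes_R C \to \ol{B}(U_W)$ and $\ol{B}(U) \otimes_A C \to \ol{B}(U_W)$ both agree, because each is induced by the same universal property from the map of exact sequences. No serious obstacle is anticipated; the content has already been carried by Proposition \ref{prop:bbar_sequence_is_nice}, and this corollary is its geometric repackaging.
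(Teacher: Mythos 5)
Your proposal is correct and follows essentially the same route as the paper: the isomorphism is obtained by taking $\Spec$ of the map $\ol{B}(U) \otimes_A R \to \ol{B}(U_T)$ from Proposition \ref{prop:bbar_sequence_is_nice}(iii), compatibility with $\tau$ is a diagram chase on the defining ring maps, and the cocycle condition is inherited from the naturality of the cohomology-and-base-change isomorphism. The only quibble is notational: in the last paragraph you reuse the letter $C$ for the coordinate ring of $W$, which collides with the curve $C$; otherwise the argument matches the paper's.
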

\begin{proof}
We get the isomorphism $\phi_{S,T}$ by taking Spec of the map $\ol{B}(U) \otimes_A R \to \ol{B}(U_T)$ from
Proposition \ref{prop:bbar_sequence_is_nice}(iii). The commutativity of $\phi_{S,T}$ with the map from $U$ follows from a straightforward
diagram chase. The commutativity of $\phi_{S,T}$ with the maps to $T$ holds since $\ol{B}(U) \otimes_A R \to \ol{B}(U_T)$ is a map
of $R$-modules. The cocycle condition holds since it holds of the cohomology--and--base change map.
\end{proof}

\subsection{Well-definedness of the contracted curve}

We will assume throughout the remainder of the section that we are in the standard situation.

Our first task will be to see that $\tau : U \to \ol{U}$ restricts to an isomorphism in the complement of $E$. This will ensure that the pushout defining $\ol{C}$ exists and that $\tau : C - E \to \ol{C} - \tau(E)$ is an isomorphism.

\begin{lemma} \label{lem:expression_for_I_tau_E}
Let $I_{\tau(E)} \subseteq \ol{B}(U)$ be the ideal of the scheme theoretic image of $E$ under $\tau$.
Let $\phi : B(U) \to \ol{B}(U)$ be the quotient map.
Then
\[
  \phi^{-1}(I_{\tau(E)}) = \Big\{ (f, c) \, \, \Big| \, \, f \in \Gamma(U, \mathscr{O}_U(-\ol{\lambda})) \text{ and }c \in \rho(\Gamma(S, \mathscr{O}_S(-\rho))) \Big\}
\]
and
\[
  I_{\tau(E)} = \Big\{ \phi(f, 0) \, \, \Big| \, \, f \in \Gamma(U, \mathscr{O}_U(-\ol{\lambda})) \Big\}.
\]
\end{lemma}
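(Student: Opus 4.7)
The plan is to reduce $\phi^{-1}(I_{\tau(E)})$ to a direct sum and then identify the second summand. First, $I_{\tau(E)}$ is by definition the kernel of the restriction map $\ol{B}(U) \to \Gamma(E, \mathscr{O}_E)$. Composing with $\phi$, the map $B(U) \to \Gamma(E, \mathscr{O}_E)$ sends $(f, c) \mapsto \ol{\lambda}(f)|_E + \pi^\sharp(c)|_E$, and since $\ol{\lambda}(f)$ lies in the image of $\ol{\lambda}: \mathscr{O}_C(-\ol{\lambda}) \to \mathscr{O}_C$, i.e.\ in the ideal sheaf $I_E$, the first term vanishes. Thus
\[
\phi^{-1}(I_{\tau(E)}) = \Gamma(U, \mathscr{O}_U(-\ol{\lambda})) \oplus K, \quad K := \ker\bigl(\Gamma(S,\mathscr{O}_S)\to\Gamma(E,\mathscr{O}_E)\bigr),
\]
and the content of the first formula reduces to the identification $K = \rho(\Gamma(S,\mathscr{O}_S(-\rho)))$.

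The inclusion $\rho(\Gamma(S,\mathscr{O}_S(-\rho))) \subseteq K$ is formal. Because $\ol{\lambda} \leq \pi^*\rho$ in $\ol{M}_C$ (by definition of a mesa, $\ol{\lambda}$ equals $\rho - \lambda_E$ on $E$ and $0$ elsewhere), $\pi^*\rho - \ol{\lambda}$ is a section of $\ol{M}_C$, and the multiplication-by-$\pi^*\rho$ map factors as $\pi^*\mathscr{O}_S(-\rho) \xrightarrow{\pi^*\rho - \ol{\lambda}} \mathscr{O}_C(-\ol{\lambda}) \xrightarrow{\ol{\lambda}} \mathscr{O}_C$. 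Applied to $\pi^*g$ for $g \in \Gamma(S, \mathscr{O}_S(-\rho))$, this realizes $\pi^\sharp(\rho(g))$ as an element of the image of $\ol{\lambda}$, hence as a section of $I_E$, which vanishes on $E$.

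The reverse inclusion is the main obstacle, and I plan to handle it by an \'etale-local section argument mirroring the proof of Proposition \ref{prop:bbar_sequence_is_nice}(i). After passing to an \'etale neighborhood of a geometric point of $S$, I choose a section $\sigma: S \to C$ through a smooth point of the top $F$ of the mesa; such a $\sigma$ lies in $U$ (since $U \supseteq E \supseteq F$), and at its image the log structure of $C$ is pulled back from $S$ with $\ol{\lambda}$ specializing to $\pi^*\rho$. In a standard local chart, $I_E$ is then generated by the image of $\rho$, so $\sigma^* I_E = \mathrm{im}(\rho) \subseteq \mathscr{O}_S$ as ideal sheaves. Given $c \in K$, we have $\pi^\sharp(c) \in \Gamma(U, I_E)$; applying $\sigma^*$ and using $\sigma^* \circ \pi^\sharp = \mathrm{id}$ yields $c \in \Gamma(S, \sigma^* I_E) = \rho(\Gamma(S, \mathscr{O}_S(-\rho)))$, the last equality holding because $S$ is affine. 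Since membership in an ideal descends along faithfully flat covers, this gives the inclusion over $S$ itself.

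The second formula then follows by a short manipulation. For $(f, \rho(g)) \in \phi^{-1}(I_{\tau(E)})$, the element $((\rho - \ol{\lambda})(g), -\rho(g))$ lies in the kernel of $\phi$ by construction, giving $\phi(0, \rho(g)) = \phi((\rho - \ol{\lambda})(g), 0)$, whence $\phi(f, \rho(g)) = \phi(f + (\rho - \ol{\lambda})(g), 0)$. Since $(\rho - \ol{\lambda})(g) \in \Gamma(U, \mathscr{O}_U(-\ol{\lambda}))$, the representative $f + (\rho - \ol{\lambda})(g)$ still lies in $\Gamma(U, \mathscr{O}_U(-\ol{\lambda}))$, establishing the second formula.
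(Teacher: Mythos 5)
Your proof is correct and follows essentially the same route as the paper's: reduce to the condition on $c$ (since $\ol{\lambda}(f)$ always vanishes on $E$), identify that condition with membership in $\rho(\Gamma(S,\mathscr{O}_S(-\rho)))$ via an \'etale-local section through the top of the mesa, and obtain the second formula by absorbing $c$ into the first coordinate using the kernel of $\phi$. The only differences are cosmetic: the paper treats the degenerate case $E=\emptyset$ separately (where your section-through-$F$ argument does not literally apply, though the claim is then trivial), and it phrases the reverse inclusion via explicit local lifts $\pi^*c|_{U_i}=\ol{\lambda}(d_i)$ rather than inverse-image ideal sheaves.
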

\begin{proof}
Let $\phi(f,c)$ be an arbitrary element of $\ol{B}(U)$. By definition of $\tau$, the image of $\phi(f,c)$ in $\Gamma(U, \mathscr{O}_U)$ is $\ol{\lambda}(f) + \pi^*c$.
By definition of $E$, $\ol{\lambda}(f)$ vanishes on $E$, so $\ol{\lambda}(f) + \pi^*c$ vanishes on $E$ if and only if $\pi^*c$ does. We now want to show that $\pi^*c$ vanishes on $E$ if and only if $c \in \rho(\Gamma(S, \mathscr{O}_S(-\rho)))$.

We may check this \'etale locally on $S$, so we may assume that there is a section $\sigma : S \to C$ going through the locus where $\ol{\lambda} = \rho$. Note $\sigma^*(\ol{\lambda}) = \rho$. Now, if $\pi^*c$ reduces to $0$ in $\mathscr{O}_E$, there is an open cover
$\{ U_i \}_{i \in I}$ of $U$ and elements $d_i \in \Gamma(U_i, \mathscr{O}_C(-\ol{\lambda}))$ so that $\pi^*c|_{U_i} = \ol{\lambda}(d_i)$. Pulling back along $\sigma$, we obtain an open cover $\{ V_i \}_{i \in I} = \{ \sigma^{-1}(U_i) \}_{i \in I}$ of $S$ and elements
$\sigma^*d_i \in \Gamma(V_i, \mathscr{O}_S(-\rho))$ so that 
\[
  c|_{V_i}  = \sigma^*(\pi^*c|_{U_i}) = \rho(\sigma^*d_i).
\]
That is, $c$ is locally in the image of $\mathscr{O}_S(-\rho) \overset{\rho}{\to} \mathscr{O}_S$. Since $S$ is affine, $c \in \rho(\Gamma(S, \mathscr{O}_S(-\rho)))$. Conversely, if $c \in \rho(\Gamma(S, \mathscr{O}_S(-\rho)))$, then since $\rho = \ol{\lambda} + (\rho - \ol{\lambda})$,
\[
  \pi^*c \in \ol{\lambda}\left( \, (\rho - \ol{\lambda})\left(\Gamma(U, \mathscr{O}_C(-\rho))\right) \, \right) \subseteq \ol{\lambda}(\Gamma(U, \mathscr{O}_C(-\ol{\lambda}))),
\]
so $\pi^*c$ vanishes on $E$. Therefore
\[
  \phi^{-1}(I_{\tau(E)}) = \Big\{ (f, c) \, \, \Big| \, \, f \in \Gamma(U, \mathscr{O}_U(-\ol{\lambda})) \text{ and }c \in \rho(\Gamma(S, \mathscr{O}_S(-\rho))) \Big\}
\]

Now, for any $(f,c) \in \phi^{-1}(I_{\tau(E)})$, we may find an element $d \in \Gamma(S, \mathscr{O}_S(-\rho))$ so that $c = -\rho(d)$. Using the $\ol{B}$-sequence, we may rewrite $\phi(f,c)$
as $\phi(f - (\rho - \ol{\lambda})(d), 0)$. This shows
\[
  I_{\tau(E)} = \Big\{ \phi(f, 0) \, \, \Big| \, \, f \in \Gamma(U, \mathscr{O}_U(-\ol{\lambda})) \Big\}.
\]
\end{proof}

\begin{proposition} \label{prop:iso_outside_E}
The map $\tau: U \to \ol{U}$ restricts to an isomorphism $U - E \to \ol{U} - \tau(E)$. In particular, the pushout defining $\ol{C}$ exists and the map $\tau : C - E \to \ol{C} - \tau(E)$ is also an isomorphism.
\end{proposition}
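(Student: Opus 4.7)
The plan is to reduce the isomorphism claim to a computation on matching distinguished affine opens. By Lemma~\ref{lem:expression_for_I_tau_E}, the ideal $I_{\tau(E)} \subseteq \ol{B}(U)$ is generated by the elements $\phi(f, 0)$ with $f$ ranging over $\Gamma(U, \mathscr{O}_C(-\ol{\lambda}))$, so $\ol{U} - \tau(E)$ is covered by the principal opens $D(\phi(f, 0))$. On the $U$ side, $U$ is affine (being the complement of a relatively ample divisor in the projective scheme $C$), and the ideal sheaf of $E$ in $U$ is the image of $\ol{\lambda}: \mathscr{O}_C(-\ol{\lambda}) \to \mathscr{O}_C$, so $U - E$ is covered by the principal opens $D(\ol{\lambda}(f))$ for the same $f$'s. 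Since $\tau^\sharp(\phi(f, 0)) = \ol{\lambda}(f)$, we have $\tau^{-1}(D(\phi(f, 0))) = D(\ol{\lambda}(f))$, and it suffices to show that for each $f$ the induced ring map
\[
   \tau_f^\sharp \colon \ol{B}(U)[\phi(f, 0)^{-1}] \longrightarrow \Gamma(U, \mathscr{O}_C)[\ol{\lambda}(f)^{-1}]
\]
is an isomorphism.

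Write $\alpha := \ol{\lambda}(f)$. Surjectivity is immediate: for $r \in \Gamma(U, \mathscr{O}_C)$, the product $rf$ lies in $\Gamma(U, \mathscr{O}_C(-\ol{\lambda}))$ and satisfies $\tau^\sharp(\phi(rf, 0)) = \ol{\lambda}(rf) = r\alpha$, so $r/\alpha^n = \phi(rf, 0)/\phi(f, 0)^{n+1}$. For injectivity I would exploit the twisted multiplication on $B(U)$. Setting $b = \phi(g, c)$, a direct computation from the formula $(g, c)\cdot(f, 0) = (\ol{\lambda}(gf) + cf, 0)$ and the identity $\ol{\lambda}(gf) = \ol{\lambda}(g) f$ gives
\[
   (g, c) \cdot (f, 0) = \bigl(\ol{\lambda}(g) f + c f, \, 0\bigr) = \bigl(\tau^\sharp(b) \cdot f, \, 0\bigr),
\]
and iterating yields the key identity
\[
   b \cdot \phi(f, 0)^l \;=\; \phi\bigl( \tau^\sharp(b) \cdot \alpha^{l-1} \cdot f, \, 0 \bigr), \qquad l \geq 1.
\]
Thus if $\tau_f^\sharp(b/\phi(f, 0)^m) = 0$, then $\tau^\sharp(b) \alpha^k = 0$ in $\Gamma(U, \mathscr{O}_C)$ for some $k$, so $b \cdot \phi(f, 0)^{k+1} = 0$ in $\ol{B}(U)$, giving the desired vanishing in the localization.

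The main obstacle is this injectivity step: the second summand $\Gamma(S, \mathscr{O}_S)$ of $B(U)$ does not localize cleanly on its own, and it is only the identity displayed above---which absorbs the constant part $c$ into the action of $\tau^\sharp(b)$---that forces each $b \cdot \phi(f, 0)^l$ into the first summand, where the relation $\tau^\sharp(b) \alpha^k = 0$ can do its work. Once the isomorphism $\tau|_{U - E}: U - E \to \ol{U} - \tau(E)$ is in place, it is in particular an open immersion, so the diagram defining $\ol{C}$ in Definition~\ref{def:contraction} is a pushout of open immersions and thus exists as an algebraic space; inspecting the gluing then shows that the induced map $C - E \to \ol{C} - \tau(E)$ is likewise an isomorphism.
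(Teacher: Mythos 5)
Your ring-theoretic computation is essentially the paper's own: the identity $(g,c)\cdot(f,0) = \big((\ol{\lambda}(g)+c)f,\,0\big)$ is exactly the mechanism used there for both injectivity and surjectivity of the localized map, and passing from $B(U)$ to $\ol{B}(U)$ costs nothing because $\Gamma(S,\mathscr{O}_S(-\rho))$ is killed upon inverting $(f,0)$. However, there is a genuine gap at the foundation of your reduction: $U$ is \emph{not} affine. The divisor $\Sigma = \sum_i \sigma_i$ meets only the components of $C - E$ in each fiber and is disjoint from $E$ (it must be, since $E \subseteq U = C - \Sigma$); hence it has degree zero on every component of $E$, is not relatively ample, and $U$ contains the proper one-dimensional subscheme $E$ in every fiber where $E$ is nonempty. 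Consequently you cannot take for granted that $D(\ol{\lambda}(f))$ is affine, and without that, an isomorphism on global regular functions does not upgrade to an isomorphism of schemes $D(\ol{\lambda}(f)) \to D(\phi(f,0))$: the target is a distinguished affine open of $\Spec \ol{B}(U)$, but the source is a priori just an open subscheme of $U$ with the right ring of global sections.

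This missing step is where the paper spends the final third of its proof. \'Etale locally on $S$ it chooses auxiliary sections $\Sigma'$ through the smooth locus of the fibers of $E$, shows $\mathscr{O}_C(n(\Sigma+\Sigma'))$ is relatively very ample for $n \gg 0$ (by the same cohomological vanishing as in Proposition \ref{prop:no_coh_minus_lambda_bar}), deduces that $U' = C - (\Sigma \cup \Sigma')$ is affine, and then identifies $D(\ol{\lambda}(f)) = U' - V(\ol{\lambda}(f)|_{U'})$ as a distinguished open of an affine scheme, hence affine. A second, smaller omission: for the $D(\phi(f,0))$ to cover $\ol{U} - \tau(E)$ you need the set-theoretic image $\tau(E)$ to coincide with the scheme-theoretic image cut out by $I_{\tau(E)}$; the paper gets this from properness of $E \to \ol{U}$ (which also shows $\tau(E)$ is closed, so that $\ol{U}-\tau(E)$ is open). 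With the affineness argument supplied, the rest of your proposal goes through.
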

\begin{proof}
Since $\pi : E \to S$ is proper and $\ol{U} \to S$ is affine (hence separated), $\tau : E \to \ol{U}$ is proper. In particular, $\tau(E)$ is closed, so $\tau(E)$ coincides topologically with the scheme theoretic image of $E$.

As above, let $I_{\tau(E)} \subseteq \ol{B}(U)$ be the ideal of the scheme theoretic image of $E$ under $\tau$, and let $\phi : B(U) \to \ol{B}(U)$ be the quotient map.
Let $\phi(f,0) \in I_{\tau(E)}$. Since $\phi(f, 0)$ maps to $\ol{\lambda}(f)$ in $\Gamma(U, \mathscr{O}_C)$, we have a cartesian square
\begin{equation} \label{eq:tau_E_complement_affine_chart}
  \xymatrix{
    D(\ol{\lambda}(f)) \ar[d] \ar[r] & D( \phi(f,0) ) \ar[d] \\
    U \ar[r]^\tau & \ol{U}.
  }
\end{equation}
As $f$ varies over the elements of $\Gamma(U, \mathscr{O}_C(-\ol{\lambda}))$, the open subsets $D( \phi(f, 0) )$ vary over an affine open cover of $\ol{U} - \tau(E)$, and the $D(\ol{\lambda}(f))$ vary over an open cover of $U - E$.
Therefore it is enough to show that the top arrow is an isomorphism for each element $\phi(f, 0)$. Note that $D(\ol{\lambda}(f))$ is an open subset of $U$, which is not affine, so we do not yet know that $D(\ol{\lambda}(f))$ is affine.

We start by showing that the top row of (\ref{eq:tau_E_complement_affine_chart}) induces an isomorphism on global regular functions.
Note that $\Gamma(S, \mathscr{O}_S(-\rho)) \otimes_{B(U)} B(U)[(f,0)^{-1}] = 0$, as $(f,0)$ acts as 0 on $\Gamma(S, \mathscr{O}_S(-\rho))$. Then, tensoring the $\ol{B}$-sequence
\[
  0 \to \Gamma(S, \mathscr{O}_S(-\rho)) \to B(U) \overset{\phi}{\to} \ol{B}(U) \to 0
\]
with the flat $B(U)$-module $B(U)[(f,0)^{-1}]$, we get
\[
  0 \to B(U)[(f,0)^{-1}] \overset{\sim}{\to} \mathscr{O}_{\ol{U}}(D(\phi(f,0))) \to 0.
\]

Now consider the map
\[
  B(U)[(f,0)^{-1}] \to \Gamma(U, \mathscr{O}_C)[\ol{\lambda}(f)^{-1}] \cong \Gamma(D(\ol{\lambda}(f)), \mathscr{O}_C)
\]
induced by tensoring $B(U) \to \Gamma(U, \mathscr{O}_C)$ with $B(U)[(f,0)^{-1}]$.
We'd like to show that this map is an isomorphism. For injectivity, suppose $(g,c) \cdot (f,0)^{-n}$ maps to 0. Then $\ol{\lambda}(f)^k (\ol{\lambda}(g) + c) = 0$ for some $k$.
Now, multiplying by a unit,
\begin{align*}
  (g,c) \cdot (f,0)^{-n} \cdot (f,0)^{n + k + 1} &= (g, c) \cdot ( (\ol{\lambda}(f))^k f, 0) \\
    &= \Big((\ol{\lambda}(g) + c) \cdot \ol{\lambda}(f)^k f, \, 0\Big) \\
    &= 0
\end{align*}
so $(g,c) \cdot (f,0)^{-n}$ must have been zero to begin with in $B(U)[(f,0)^{-1}]$.

For surjectivity, suppose $g \cdot (\ol{\lambda}(f))^{-n} \in \Gamma(U, \mathscr{O}_C)[\ol{\lambda}(f)^{-1}]$. This has preimage $(f \cdot g, 0) \cdot (f, 0)^{-(n + 1)}$ since
\begin{align*}
  (f \cdot g, 0) \cdot (f, 0)^{-(n + 1)} &\mapsto \ol{\lambda}(f \cdot g) \ol{\lambda}(f)^{-(n + 1)} \\
   &= g \cdot \ol{\lambda}(f)^{-n}.
\end{align*}

The map on global regular functions induced by $\tau$ is the composite isomorphism from $\mathscr{O}_{\ol{U}}(D(\phi(f,0)))$ to $\mathscr{O}_C(D(\ol{\lambda}(f)))$. The same argument shows that if $T \to S$ is an \'etale cover,
\[
  D(\ol{\lambda}(f)) \times_S T \cong D(\ol{\lambda}(f|_{C \times_S T})) \longrightarrow D(\phi(f|_{C \times_S T}, 0)) \cong D(\phi(f, 0)) \times_S T
\]
also induces an isomorphism on global regular functions.

To complete the proof, we check that $D(\ol{\lambda}(f))$ is affine \'etale locally on $S$. Let $\Sigma$ be the sum of the divisors of the sections $\sigma_i$. Replacing $S$ by an \'etale cover if necessary, we can find a finite union of sections
$\Sigma'$, passing through the smooth locus of each component of each fiber of $E$ and disjoint from each other and the $\sigma_i$s. Consider the sheaves $\mathscr{O}_C(n(\Sigma + \Sigma'))$. Arguing as in Proposition \ref{prop:no_coh_minus_lambda_bar}, we see that for a sufficiently large
$n$, $H^1(C, \mathscr{O}_C(n(\Sigma + \Sigma'))) = 0$, since this holds on fibers. Then we can check that $\mathscr{O}_C(n(\Sigma + \Sigma'))$ is relatively very ample on fibers; this is clear for sufficiently large $n$. Consider the associated
embedding of $C$ into projective space over $S$; the locus $U' = C - (\Sigma \cup \Sigma')$ is a closed subset of the complement of some hyperplane, so affine. Then $D(\ol{\lambda}(f)) = U - V(\ol{\lambda}(f)) = U' - V(\ol{\lambda}(f)|_{U'})$ is a
distinguished open subset of $U'$, so it is affine too.
\end{proof}

\begin{corollary}
The map $\tau : C \to \ol{C}$ is surjective.
\end{corollary}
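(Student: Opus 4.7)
The plan is to read surjectivity off directly from the construction of $\ol{C}$ as the pushout
\[
  \xymatrix{
    U - E \ar[r] \ar[d] & C - E \ar[d]^\tau \\
    \ol{U} \ar[r] & \ol{C}
  }
\]
given in Definition \ref{def:contraction}, together with the isomorphism established in Proposition \ref{prop:iso_outside_E}. As an algebraic space, $\ol{C}$ is covered by the images of the two canonical maps from $\ol{U}$ and from $C - E$, so it suffices to show that every point of $\ol{U}$ and every point of $C-E$ lies in the image of $\tau : C \to \ol{C}$.

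First, points of $C - E$ are trivially in the image of $\tau$: the composite $C - E \hookrightarrow C \to \ol{C}$ is, by construction of the pushout, the map we care about, so everything in $C - E$ is hit. For a point $x \in \ol{U}$, I split into cases according to whether $x \in \tau(E)$. If $x \in \tau(E)$, then $x$ is in the image of $E \subseteq U \subseteq C$ under $\tau$ by definition. If $x \in \ol{U} - \tau(E)$, then Proposition \ref{prop:iso_outside_E} says that $\tau$ restricts to an isomorphism $U - E \to \ol{U} - \tau(E)$, so there is a (unique) point of $U - E \subseteq C$ mapping to $x$.

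Combining the two cases, every point of $\ol{U}$ lies in $\tau(C)$, and every point of $C-E$ lies in $\tau(C)$. Since the images of $\ol{U}$ and $C - E$ jointly cover $\ol{C}$ by construction of the pushout, we conclude that $\tau : C \to \ol{C}$ is surjective. There is no substantive obstacle here; all of the work has already been done in Proposition \ref{prop:iso_outside_E} and in setting up the pushout, and this corollary is essentially a bookkeeping consequence.
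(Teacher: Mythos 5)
Your proof is correct and follows essentially the same route as the paper: decompose $\ol{C}$ via the pushout into $\ol{U}$ and $C-E$, use Proposition \ref{prop:iso_outside_E} to hit $\ol{U} - \tau(E)$, and observe that $E$ maps onto $\tau(E)$. The paper's version is just a terser statement of the same bookkeeping.
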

\begin{proof}
We have $\tau$ sends $U - E$ onto $U - \tau(E)$. We noted in the course of the previous proof that the image of $E$ is $\tau(E)$. Gluing with the rest of $C$, $\tau$ is surjective.
\end{proof}

Next, we must verify is that $\ol{C}$ does not depend on our choice of sections $\sigma_i$.

\begin{proposition} \label{prop:section_invariant}
Suppose we are in the standard situation and $\sigma'_1, \ldots, \sigma'_k$ are a second choice of sections. There is a uniquely
specified isomorphism $\phi_{\sigma_\bullet, \sigma_\bullet'} : \ol{(C, \sigma_\bullet)} \to \ol{(C, \sigma_\bullet')}$ so that
\[
  \xymatrix{
    C_T \ar[rdd] \ar[rr] \ar[rd]& & \ol{(C, \sigma_\bullet')}  \ar[ldd] \\
     & \ol{(C, \sigma_\bullet)} \ar[d] \ar^{\phi_{\sigma_\bullet, \sigma_\bullet'}}[ru]\\
    & T
  }
\]
commutes. Moreover, if $\sigma''_1, \ldots, \sigma''_l$ is a third choice of sections, the cocycle condition
\[
  \phi_{\sigma_\bullet, \sigma_\bullet''} = \phi_{\sigma_\bullet'', \sigma_\bullet'} \circ \phi_{\sigma_\bullet, \sigma_\bullet'}
\]
holds.
\end{proposition}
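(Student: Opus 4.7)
The plan is to establish uniqueness first, then build $\phi_{\sigma_\bullet, \sigma_\bullet'}$ by passing through a common refinement of the two families of sections, and finally deduce the cocycle condition from uniqueness.

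For uniqueness, I note that by Proposition \ref{prop:iso_outside_E}, in each contracted curve the map $\tau$ restricts to an isomorphism on the open $C - E$, and $\tau$ is surjective, so the image of $C - E$ is a schematically dense open in $\ol{(C,\sigma_\bullet)}$. Any $S$-morphism $\ol{(C,\sigma_\bullet)} \to \ol{(C,\sigma_\bullet')}$ that commutes with $\tau$ is determined on this dense open, and hence uniquely determined. This forces uniqueness of any candidate $\phi_{\sigma_\bullet, \sigma_\bullet'}$ and, by the same argument, gives the cocycle condition automatically: for any three choices of sections, both sides of the asserted identity are $S$-isomorphisms fitting into the defining triangle, so they must agree.

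For existence, observe that the concatenation $\sigma_\bullet \cup \sigma_\bullet'$ is again a family of sections satisfying the standard situation, since every component met by a section in $\sigma_\bullet$ is still met in the larger family. Let $U$, $U'$, and $U \cap U'$ denote the respective complements, so that $U \cap U'$ is a neighborhood of $E$ contained in both $U$ and $U'$. By the forthcoming Lemma \ref{lem:any_subset_of_u_works}, restriction induces canonical isomorphisms $\ol{B}(U) \to \ol{B}(U \cap U')$ and $\ol{B}(U') \to \ol{B}(U \cap U')$, compatible with the maps to $\Gamma(U \cap U', \mathscr{O}_C)$. Taking $\Spec$ and then applying the pushout description of Definition \ref{def:contraction}, these identifications assemble into canonical $S$-isomorphisms $\ol{(C,\sigma_\bullet \cup \sigma_\bullet')} \to \ol{(C,\sigma_\bullet)}$ and $\ol{(C,\sigma_\bullet \cup \sigma_\bullet')} \to \ol{(C,\sigma_\bullet')}$ over $S$, each compatible with $\tau$. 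Composing one with the inverse of the other produces the required $\phi_{\sigma_\bullet, \sigma_\bullet'}$.

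The main obstacle is the invocation of Lemma \ref{lem:any_subset_of_u_works}, which encapsulates the fact that the construction of $\ol{B}$ depends only on a sufficiently local neighborhood of $E$: shrinking $U$ toward $E$ by deleting additional sections removes only sections of $\mathscr{O}_C(-\ol{\lambda})$ that live in $C - E$, where $\ol{\lambda}$ vanishes and the relevant data is accounted for by the $\Gamma(S, \mathscr{O}_S)$-summand and killed in the quotient $\ol{B}$. Granted that lemma, the rest of the proof is a formal gluing argument combined with the uniqueness principle established above.
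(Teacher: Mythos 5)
Your overall strategy --- pass to the common refinement $U \cap U'$ (the complement of the concatenated family of sections) and invoke Lemma \ref{lem:any_subset_of_u_works} --- is exactly the paper's: the paper applies that lemma with $V = U \cap U'$ to get $\phi_{\sigma_\bullet,\sigma_\bullet'}$, and with $V = U \cap U' \cap U''$ to get the cocycle condition. However, one step as you state it is false. You claim that ``restriction induces canonical isomorphisms $\ol{B}(U) \to \ol{B}(U \cap U')$.'' It does not: shrinking the open set \emph{enlarges} the ring, since $\Gamma(U, \mathscr{O}_C(-\ol{\lambda})) \to \Gamma(U \cap U', \mathscr{O}_C(-\ol{\lambda}))$ is injective but not surjective (sections over the smaller open are allowed extra poles along the deleted sections $\sigma'_\bullet$, which lie in $C - E$), so $\ol{B}(U) \subsetneq \ol{B}(U\cap U')$ in general. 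Your closing gloss on the lemma has the same problem: passing to a smaller $V$ does not ``remove'' sections that get ``killed in the quotient''; it adds them, and they survive in $\ol{B}(V)$ as honest extra regular functions on the smaller affine $\ol{V}$. What Lemma \ref{lem:any_subset_of_u_works} actually provides is not a ring isomorphism but the statement that the square with vertices $V, U, \ol{V}, \ol{U}$ is cocartesian, hence that the two pushouts along $C - E$ defining $\ol{C}$ agree. You do gesture at ``the pushout description of Definition \ref{def:contraction},'' and once the argument is routed entirely through the equality of colimits (as in the paper) rather than through a purported identification of the $\ol{B}$'s inside $\Gamma(U \cap U', \mathscr{O}_C)$, the existence part is correct.

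On uniqueness and the cocycle condition: your argument via schematic density of $\tau(C - E)$ needs both that $\tau$ is schematically dominant and that the target is separated. Both are true, but they are proved only in the final section of the paper (the schematic dominance of $\tau$ is in fact the most involved proof there, requiring completions and Artinian filtrations over a non-reduced base), so as written your uniqueness step is not self-contained at this point in the development. The paper sidesteps this entirely: uniqueness and the cocycle condition come for free from the universal property of the colimit, since $\ol{(C,\sigma_\bullet)}$, $\ol{(C,\sigma'_\bullet)}$, and $\ol{(C,\sigma''_\bullet)}$ are all presented as the \emph{same} colimit via the lemma. I would recommend replacing your density argument with that universal-property argument.
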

\begin{proof}
Let $U'$ be the complement of the sections $\sigma_1', \ldots, \sigma_k'$. Recall that for any open subset $V$ of $C$ containing $E$ we have $V \to \ol{V} = \Spec \ol{B}(V)$, as in the discussion preceding Definition \ref{def:contraction}.

Our technique will be to show that
\begin{equation} \label{eq:uprime_u_cartesian_square}
   \xymatrix{
       U' \cap U \ar[d] \ar[r] & U \ar[d] \\
      \ol{U' \cap U} \ar[r] & \ol{U}
   },
\end{equation}
is cocartesian. Then the tall commutative square in
\[
  \xymatrix{
    (U' \cap U) - E \ar[r] \ar[d] & U - E \ar[d] \\
    U' \cap U \ar[d] \ar[r] & U \ar[d] \\
      \ol{U' \cap U} \ar[r] & \ol{U}
  }
\]
is cocartesian, implying that both the right square and wide square in
\[
  \xymatrix{
    (U' \cap U) - E \ar[r] \ar[d] & U - E \ar[r] \ar[d] & C - E \ar[d] \\
    \ol{U' \cap U} \ar[r] & \ol{U} \ar[r] & \ol{C}
  }
\]
are cocartesian. Symmetrically, both the right square and wide square of
\[
  \xymatrix{
    (U' \cap U) - E \ar[r] \ar[d] & U' - E \ar[r] \ar[d] & C - E \ar[d] \\
    \ol{U' \cap U} \ar[r] & \ol{U'} \ar[r] & \ol{C}
  }
\]
are cocartesian. Then $(C, \sigma_\bullet)$ and $(C, \sigma_\bullet')$ share the universal property of the pushout for the wide square, yielding the unique isomorphism $\phi_{\sigma_\bullet, \sigma_\bullet'}$. A similar argument for a triple
intersection gives us the cocycle condition.

Now, in order to show that the square (\ref{eq:uprime_u_cartesian_square}) is cocartesian, let $Z$ be the complement of $U' \cap U$ in $U$. Since $Z$ is a closed subset of $U - E$ and $U - E \to \ol{U} - \tau(E)$ is an isomorphism,
$\tau(Z)$ is a closed subset of $\ol{U}$ not meeting $\tau(E)$. Giving $\tau(Z)$ the reduced scheme structure, $\tau(Z)$ has an ideal $I_{\tau(Z)}$ in $\ol{B}(U)$.

Recall that $\phi : B(U) \to \ol{B}(U)$ is the quotient map. We claim that Zariski locally on $S$ there is an element of $\phi^{-1}(I_{\tau(Z)})$ of the form $(f, c)$ where $c$ is a unit. If not, then we may choose a point $s$ of $S$ where
$(f, c) \equiv (f, 0) \mod k(s)$ for all $(f,c) \in \phi^{-1}(I_{\tau(Z)})$. This implies that $\tau(E)$ has non-trivial intersection with $\tau(Z)$ in the fiber over $s$, a contradiction.

Since the square (\ref{eq:uprime_u_cartesian_square}) is a diagram of $S$-schemes, we may check that it is a pushout square Zariski locally on $S$. Passing to an open cover of $S$ if necessary, we may assume that there is an element $(f,c) \in \phi^{-1}(I_{\tau(Z)})$ with $c$ a unit. Let $V$ be the complement of the vanishing of $\ol{\lambda}(f) + c$ in $U$. Consider the diagram
\[
  \xymatrix{
       V \ar[r] \ar[d] & U' \cap U \ar[d] \ar[r] & U \ar[d] \\
       \ol{V} \ar[r] & \ol{U' \cap U} \ar[r] & \ol{U}
   }
\]
A standard diagram chase shows that if the left square and wide square are cocartesian, then the right square is cocartesian. Note that $U' \cap U$ with sections $\{ \sigma_i \} \cup \{ \sigma_j' \}$ is also in the standard situation.  The argument that shows that the left square is cocartesian is identical to the argument that the wide square is cocartesian,
so we just give the argument for the latter.

Now,
\[
  \xymatrix{
     V \ar[r] \ar[d] & U \ar[d] \\
     \ol{U} - V(\phi(f,c)) \ar[r] & \ol{U}
  }
\]
is trivially cocartesian, since $\tau : U - E \to \ol{U} - \tau(E)$ is an isomorphism and the vanishing locus of $\phi(f,c)$ does not meet $\tau(E)$. It suffices, therefore, to show that there is an isomorphism $\ol{V} \to \ol{U} - V(\phi(f,c))$ commuting with the maps from $V$.
One checks via a direct computation that the map $\ol{B}(U)[\phi(f,c)^{-1}] \to \ol{B}(V)$ induced by
\[
  (g,d) \cdot (f,c)^{-n} \mapsto (g,d) \cdot \left( \frac{-f}{c(\ol{\lambda}(f) + c)}, c^{-1}\right)^n
\]
gives the required isomorphism.
\end{proof}

\subsection{Regular functions near the singular point} \label{ssec:reg_functions}

Our goal in this section will be to describe the regular functions in a neighborhood of $\tau(E)$ --- that is, the ring $\ol{B}(U)$ --- when the base is
the spectrum of an algebraically closed field. This provides us with a description of the singularities that can arise in $\ol{C}$, and we give several examples to illustrate some of the possibilities. In particular, we will see that $\tau$ contracts $E$ to a singularity with the same genus, as required for $\tau$ to be a contraction. We will also see that $\ol{\pi} : \ol{C} \to S$ has reduced geometric fibers, as required for $\ol{\pi}$ to be a family of curves. 

We start with a description of $\Gamma(U, \mathscr{O}_C(-\ol{\lambda}))$.

\begin{proposition} \label{prop:sections_of_oc_minus_lambda_bar_on_fiber}
Suppose $S = \Spec k$ is a geometric point and $E$ is nonempty with arithmetic genus $g$.
Let $Z$ be the union of the irreducible components of $U$ not contained in $E$, and let $p_1, \ldots, p_m$ be the points in which $E$ meets $Z$.
Finally, let $V$ be the $k$-vector space of tuples $\{ (a_1, \ldots, a_m) \} \subseteq k^m$ so that there exists $\sigma \in \Gamma(E, \mathscr{O}_E(-\ol{\lambda}))$
 with $\sigma(p_i) = a_i$ for all $i = 1, \ldots, m$.

Then there is an exact sequence
\[
  0 \to \Gamma(S, \mathscr{O}_S(-\rho)) \overset{\rho - \ol{\lambda}}{\longrightarrow} \Gamma(U, \mathscr{O}_C(-\ol{\lambda})) \to \Gamma(Z, \mathscr{O}_Z(-\ol{\lambda})) \to k^m/V \to 0,
\]
where the second map is induced by restriction from $U$ to $Z$, and the third map is induced by evaluation at the points $p_i$.
Moreover, the first map admits a splitting, so that
\[
  \Gamma(U, \mathscr{O}_C(-\ol{\lambda})) \cong \Gamma(S, \mathscr{O}_S(-\rho)) \oplus \bigg\{ f \in \Gamma(Z, \mathscr{O}_Z(-\ol{\lambda})) \, \, \bigg| \, \, [f(p_i)]_{i = 1}^n \in V \bigg\}.
\]
\end{proposition}

\begin{proof}
Let $F$ be the top of the mesa $\ol{\lambda}$. Note that $\rho = \ol{\lambda}$ on the smooth locus of $F$ and $\rho > \ol{\lambda}(v)$ for the components of $\ol{C - F}$.
It follows that the map $\pi^{-1}\mathscr{O}_S(-\rho) \to \mathscr{O}_U(-\rho) \overset{\rho - \ol{\lambda}}{\longrightarrow} \mathscr{O}_U(-\ol{\lambda})$ is induced by multiplication by a unit on the smooth locus of $F$ and multiplication
by zero elsewhere.

Now, we have a short exact sequence
\[
  0 \to \mathscr{O}_U(-\ol{\lambda}) \to \mathscr{O}_E(-\ol{\lambda}) \oplus \mathscr{O}_Z(-\ol{\lambda}) \to \bigoplus_{i = 1}^m k(p_i) \to 0.
\]
This gives us an exact sequence
\[
  0 \to \Gamma(U, \mathscr{O}_U(-\ol{\lambda})) \to \Gamma(E, \mathscr{O}_E(-\ol{\lambda})) \oplus \Gamma(Z, \mathscr{O}_Z(-\ol{\lambda})) \to k^m.
\]
That is, a section of $\mathscr{O}_U(-\ol{\lambda})$ is the same as a pair of sections of $\mathscr{O}_E(-\ol{\lambda})$ and $\mathscr{O}_Z(-\ol{\lambda})$ that agree at the points $p_i$.

Let $f_Z \in \Gamma(Z, \mathscr{O}_Z(-\ol{\lambda}))$, and consider the problem of extending $f_Z$ to $\Gamma(U, \mathscr{O}_C(-\ol{\lambda}))$.
This is equivalent to finding a section $f_E \in \Gamma(E, \mathscr{O}_E(-\ol{\lambda}))$ so that $f_E(p_i) = f_Z(p_i)$ for all $i$.
By definition, such an $f_E$ exists if and only if $[ f_Z(p_i) ]_{i = 1}^m$ lies in $V$.
By Proposition \ref{prop:generalcurvesectionvalues}, for each $f_Z$ satisfying this condition, there is precisely a 1-dimensional family of such $f_E$s. In fact, since the image $I$ of
$k \cong \Gamma(S, \mathscr{O}_S(-\rho)) \overset{\rho - \ol{\lambda}}{\longrightarrow} \mathscr{O}_E(-\ol{\lambda})$ is 1-dimensional and
any $h \in I$ satisfies $h(p_i) = 0$ for all $i$, every choice of $f_E$ in the 1-dimensional family is obtained by adding an element of $I$. That is,
\[
  0 \to \Gamma(S, \mathscr{O}_S(-\rho)) \overset{\rho - \ol{\lambda}}{\longrightarrow} \Gamma(U, \mathscr{O}_C(-\ol{\lambda})) \to \Gamma(Z, \mathscr{O}_Z(-\ol{\lambda})) \to k^m/V \to 0
\]
is exact.

To show that we have a splitting of the first map, let $\sigma : S \to C$ be any section through the smooth locus of $F$. Then $f \mapsto \sigma^*f$ gives a splitting of $\Gamma(S, \mathscr{O}_S(-\rho)) \to \Gamma(U, \mathscr{O}_C(-\ol{\lambda}))$,
since $\rho - \ol{\lambda} = 0$ on the smooth locus of $F$.
\end{proof}

We will want to identify when the singularity at $\tau(E)$ is elliptic Gorenstein, so we recall a convenient characterization from \cite{smyth_mstable}.

\begin{lemma} \label{lem:elliptic_m_fold_characterization} \cite[Lemma 2.2]{smyth_mstable}
Let $\pi : C \to \Spec k$ be a reduced curve and let $\nu : \tilde{C} \to C$ be the normalization. Let $p \in C$ be a point with pre-images $p_1, \ldots, p_m$ in $\tilde C$. Then $p$ is an elliptic Gorenstein singularity if and only if $\nu^* : \hat{\mathscr{O}}_{C, p} \to \hat{\mathscr{O}}_{\tilde C, \nu^{-1}(p)}$ satisfies
\begin{enumerate}
  \item $\nu^*(m_p/m_p^2) \subset \bigoplus_{i = 1}^m m_{p_i}/m_{p_i}^2$ is a codimension-one subspace.
  \item $\nu^*(m_p/m_p^2) \not\supseteq m_{p_i}/m_{p_i}^2$ for any $i = 1, \ldots, m$.
  \item $\nu^*(m_p) \supseteq \bigoplus_{i = 1}^m m_{p_i}^2$.
\end{enumerate}
\end{lemma}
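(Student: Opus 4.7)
The plan is to translate everything into a question about the conductor ideal $\mfc := \mathrm{Ann}_{\mathscr{O}}(\nu_*\mathscr{O}_{\tilde C}/\mathscr{O})$ inside $\mathscr{O} := \hat{\mathscr{O}}_{C,p}$ and $\tilde{\mathscr{O}} := \bigoplus_{i=1}^m \hat{\mathscr{O}}_{\tilde C, p_i}$, and to prove the biconditional by comparing $\mfc$ with $J := \bigoplus_i m_{p_i}^2$. I would use two standard facts: $\mfc$ is the largest $\tilde{\mathscr{O}}$-submodule contained in $\mathscr{O}$, and for a reduced curve singularity, Gorenstein is equivalent to $\dim_k(\tilde{\mathscr{O}}/\mfc) = 2\delta$, equivalently $\dim_k(\mathscr{O}/\mfc) = \delta$. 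Note $\dim_k \tilde{\mathscr{O}}/J = 2m$.

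For the ($\Leftarrow$) direction, condition (iii) says $J \subseteq \mathscr{O}$, and since $J$ is already an ideal of $\tilde{\mathscr{O}}$, this gives $J \subseteq \mfc$. Because $m_p^2 \subseteq J$, the inclusion $\mathscr{O}/J \hookrightarrow \tilde{\mathscr{O}}/J$ identifies $\mathscr{O}/J$ with the sum of the diagonal $k$ and the image $\nu^*(m_p/m_p^2) \subseteq \bigoplus_i m_{p_i}/m_{p_i}^2$. Condition (i) thus yields $\dim_k \mathscr{O}/J = m$, so $\delta = 2m - m = m$ and $g = 1$. The $\tilde{\mathscr{O}}/J$-submodules of $\bigoplus_i \tilde{\mathscr{O}}_i/m_{p_i}^2$ are precisely products $\bigoplus_i A_i$ with $A_i \in \{0,\; m_{p_i}/m_{p_i}^2,\; \tilde{\mathscr{O}}_i/m_{p_i}^2\}$; condition (ii) prevents any nonzero such submodule from being contained in $\mathscr{O}/J$, so $\mfc = J$ and $\dim \tilde{\mathscr{O}}/\mfc = 2m = 2\delta$, establishing Gorenstein.

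For the ($\Rightarrow$) direction, assume elliptic Gorenstein, so $\delta = m$, $\dim \mathscr{O}/\mfc = m$, and $\dim \tilde{\mathscr{O}}/\mfc = 2m$. Since $\mfc$ is a $\tilde{\mathscr{O}}$-ideal, $\mfc = \bigoplus_i m_{p_i}^{a_i}$ with $\sum a_i = 2m$ and each $a_i \geq 1$. The crux is to show every $a_i = 2$, i.e.\ $\mfc = J$. I would argue via socles: Gorenstein makes the socle of $\mathscr{O}/\mfc$ one-dimensional, while the socle of $\tilde{\mathscr{O}}/\mfc$ is $\bigoplus_i m_{p_i}^{a_i-1}/m_{p_i}^{a_i}$, of dimension $m$. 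The case $a_j = 1$ is excluded because $m_{p_j} \subseteq \mfc$ would let the idempotent $e_j$ act nontrivially on $\tilde{\mathscr{O}}/\mfc$ in a way forcing $e_j$ into $\mathscr{O}/\mfc$, splitting off the $j$-th branch and raising the socle dimension above $1$. The case $a_j \geq 3$ is excluded by pigeonholing against $\sum a_i = 2m$: some other $a_i \leq 1$ would appear, contradicting the first case. Once $\mfc = J$, condition (iii) is immediate, (i) follows from $\dim \mathscr{O}/J = m$ and the splitting $\mathscr{O}/J = k \oplus \nu^*(m_p/m_p^2)$, and (ii) follows because the maximality of $\mfc$ among $\tilde{\mathscr{O}}$-ideals in $\mathscr{O}$ forbids $\mathscr{O}/J$ from containing any nonzero $\tilde{\mathscr{O}}/J$-submodule.

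The main obstacle is the pinning-down $\mfc = \bigoplus_i m_{p_i}^2$ in the ($\Rightarrow$) direction: the socle bookkeeping above is delicate and should really be carried out by examining how the inclusion $\mathscr{O}/\mfc \hookrightarrow \tilde{\mathscr{O}}/\mfc$ interacts with the Gorenstein symmetry on the ambient ring. A cleaner alternative, which I would fall back on if the direct socle argument becomes unwieldy, is to use the duality $\mfc = \Hom_{\mathscr{O}}(\tilde{\mathscr{O}}, \mathscr{O})$ and the fact that on an elliptic Gorenstein singularity the dualizing module $\omega_{C,p}$ has a specific description in terms of $\tilde{\mathscr{O}}$-valuations vanishing to order $\leq 1$ at each $p_i$; translating this back through the Gorenstein duality forces $a_i = 2$ uniformly. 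Either route, the technical heart is reconciling the product structure of $\tilde{\mathscr{O}}$ with the one-dimensional socle imposed by Gorenstein.
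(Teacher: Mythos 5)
The paper gives no proof of this lemma at all --- it is imported verbatim from Smyth with a citation --- so there is nothing internal to compare your argument against, and it must be judged on its own. Your ($\Leftarrow$) direction holds up. Writing $\mathscr{O} = \hat{\mathscr{O}}_{C,p}$, $\tilde{\mathscr{O}} = \bigoplus_i \hat{\mathscr{O}}_{\tilde{C},p_i}$ and $J = \bigoplus_i m_{p_i}^2$: condition (iii) places the $\tilde{\mathscr{O}}$-ideal $J$ inside $\mathscr{O}$ and hence inside $\mfc$; condition (i) gives $\dim_k \mathscr{O}/J = 1 + (m-1) = m$, whence $\delta = 2m - m = m$ and $g = 1$; and condition (ii) forbids any nonzero $\tilde{\mathscr{O}}/J$-ideal from sitting inside $\mathscr{O}/J$, so $\mfc = J$ and $\dim_k \tilde{\mathscr{O}}/\mfc = 2m = 2\delta$, which is the classical conductor criterion for Gorenstein. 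This is also the only direction the paper actually uses (Proposition \ref{prop:singularity_has_correct_genus}(v) checks the three conditions and concludes Gorenstein-ness), so that half of your argument would suffice for the paper's purposes.

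The ($\Rightarrow$) direction, however, rests on a false claim: Gorenstein-ness of $\mathscr{O}$ does \emph{not} force the socle of $\mathscr{O}/\mfc$ to be one-dimensional. The ordinary planar triple point $xy(x-y)=0$ is an elliptic Gorenstein singularity with $m = 3$ and $\mfc = \bigoplus_i m_{p_i}^2$, and there $\mathscr{O}/\mfc \cong k[x,y]/(x,y)^2$, whose socle is two-dimensional; the conductor is in general not principal, so the Artinian quotient of a Gorenstein one-dimensional ring by its conductor need not be Gorenstein. With that premise gone, your exclusions of $a_j = 1$ and $a_j \geq 3$ have no engine behind them. Note also that $a_j = 1$ genuinely occurs for the node (Gorenstein, genus $0$), so any correct exclusion must combine Gorenstein-ness with $\delta = m$, not use Gorenstein-ness alone. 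One route that does work: if $a_j = 1$ then $m_{p_j} \subseteq \mathscr{O}$, and a Goursat-type argument exhibits $\mathscr{O}$ as the fiber product over $k$ of $k[[t_j]]$ with the image of $\mathscr{O}$ in the remaining branches; such a transversal gluing is Gorenstein only when it is a node, contradicting $g = 1$, and then $\sum_i a_i = 2m$ with all $a_i \geq 2$ forces $a_i = 2$. Your fallback via the dualizing module would also work in principle, but as written it presupposes exactly the order-of-vanishing description of $\omega_{C,p}$ at an elliptic Gorenstein point that the lemma is meant to certify, so it is not yet a proof.
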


\begin{proposition} \label{prop:singularity_has_correct_genus}
Suppose $S = \Spec k$ is a geometric point and $E$ is nonempty with arithmetic genus $g$.
Let $Z$ be the union of the irreducible components of $U$ not contained in $E$, and let $p_1, \ldots, p_m$ be the points in which $E$ meets $Z$.
Finally, let $V$ be the $k$-vector space of tuples $\{ (a_1, \ldots, a_m) \} \subseteq k^m$ so that there exists $\sigma \in \Gamma(E, \mathscr{O}_E(-\ol{\lambda}))$
 with $\sigma(p_i) = a_i$ for all $i = 1, \ldots, m$.

Then
\begin{enumerate}
  \item $V$ is a subspace of $k^m$ of codimension $g$;
  \item We may identify $\ol{B}(U)$ with the ring
    \[
      \bigg\{ (f, c) \in \Gamma(Z, \mathscr{O}_Z(-\ol{\lambda})) \oplus k \, \, \bigg| \, \, [ f(p_i) ]_{i = 1}^m \in V \bigg\}
    \]
    where the multiplication is given by $(f, c) \cdot (f', c') = (\ol{\lambda}(ff') + fc' + f'c, cc')$;
  \item With respect to this presentation, the ideal $I_{\tau(E)} \subseteq \ol{B}(U)$ is the subset of $\ol{B}(U)$ where $c = 0$;
  \item $\tau(E)$ is a singularity of genus $g$;
  \item If $\ol{\lambda}$ is steep and $E$ has genus one, then $\tau(E)$ is elliptic Gorenstein with $m$ branches.
\end{enumerate}
In particular, for an arbitrary mesa curve $\pi : C \to S$, the geometric fibers of $\ol{\pi}: \ol{C} \to S$ are reduced and one-dimensional. 
\end{proposition}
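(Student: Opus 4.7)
Parts (i) and (iii) are direct consequences of results already in hand: (i) is the codimension statement in Lemma \ref{lem:generalcurvesectionvalues}, and (iii) is Lemma \ref{lem:expression_for_I_tau_E}. For (ii), I would take the splitting from Proposition \ref{prop:sections_of_oc_minus_lambda_bar_on_fiber}, which gives $\Gamma(U, \mathscr{O}_C(-\ol{\lambda})) \cong \Gamma(S, \mathscr{O}_S(-\rho)) \oplus W$, and combine it with the $\ol{B}$-sequence. Since $E$ is nonempty over $\Spec k$ we have $\rho \neq 0$, so $\rho : \Gamma(S, \mathscr{O}_S(-\rho)) \to \Gamma(S, \mathscr{O}_S) = k$ is the zero map, and the quotient collapses to $W \oplus k$. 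The multiplication formula is then a direct translation of the product on $B(U)$.

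The core of the argument is (iv). My plan is to realize $\ol{B}(U)$ explicitly as a subring of $\Gamma(Z, \mathscr{O}_Z)$. Identifying $\mathscr{O}_Z(-\ol{\lambda})$ with its image as an ideal sheaf in $\mathscr{O}_Z$ via the canonical section $\ol{\lambda}$, I define
\[
  \phi : \ol{B}(U) \longrightarrow \Gamma(Z, \mathscr{O}_Z), \qquad (f, c) \longmapsto f|_Z + c.
\]
Under this identification $\ol{\lambda}(ff')|_Z$ becomes the ordinary product $(f|_Z)(f'|_Z)$ in $\mathscr{O}_Z$, so the twisted formula $(\ol{\lambda}(ff') + fc' + f'c, cc')$ transforms into the usual product $(f|_Z + c)(f'|_Z + c')$ and $\phi$ is a ring homomorphism. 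Injectivity is immediate: if $f|_Z + c = 0$ then $f|_Z$ is constant, but $f|_Z$ vanishes at each $p_i$, forcing $c = 0$ and $f = 0$ on the dense open $U - E$, whence $f = 0$.

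The crucial step is identifying the image of $\phi$. By Proposition \ref{prop:log_curve_line_bundles}, on each component $Z_i$ of $Z$ we have $\mathscr{O}_C(-\ol{\lambda})|_{Z_i} = \mathscr{O}_{Z_i}(-n_i p_i)$, where $n_i$ is the outgoing slope of $\ol{\lambda}$ at $p_i$ from the $Z_i$-side. Because the mesa condition forces every path from $F$ to the boundary to have the same length $\rho$, the single-edge path through $p_i$ has length exactly $\delta_{p_i}$, whence $n_i = 1$. Hence $f|_Z$ vanishes to order one at each $p_i$, the common value $\phi(f, c)(p_i) = c$ is a constant, and the leading derivative $\partial \phi(f, c)/\partial x_i(p_i)$ coincides with the ``value'' $f(p_i)$ appearing in the definition of $V$. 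The image of $\phi$ is therefore the set of $g \in \Gamma(Z, \mathscr{O}_Z)$ satisfying $g(p_1) = \cdots = g(p_m)$ together with $[\partial g/\partial x_i(p_i)]_{i = 1}^m \in V$. Since $Z$ is smooth at each $p_i$ (these were smooth points of $Z$ before contraction), $\phi$ identifies $Z \to \ol{U}$ with the partial normalization at $\tau(E)$, which has $m$ branches. The local codimension of $\ol{B}(U)$ in $\Gamma(Z, \mathscr{O}_Z)$ at $\tau(E)$ is then $(m - 1) + g$, giving $\delta = m + g - 1$ and $g(\tau(E)) = \delta - m + 1 = g$.

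For (v) I would apply Lemma \ref{lem:elliptic_m_fold_characterization}. From the description of $m_p$ in (iii), the map $\nu^* : m_p/m_p^2 \to \bigoplus_{i=1}^m m_{p_i}/m_{p_i}^2 \cong k^m$ has image exactly $V$, a codimension-one subspace by (i), which is condition (1). Lemma \ref{lem:generalcurvesectionvalues} also provides nonzero constants $c_1, \ldots, c_m$ with $V = \{[a_i] : \sum c_i a_i = 0\}$, so $V$ contains no coordinate line $m_{p_j}/m_{p_j}^2$, giving condition (2). Condition (3) is automatic: any function vanishing to order $\geq 2$ at each $p_i$ trivially satisfies the constraint $[\partial g/\partial x_i(p_i)] = 0 \in V$. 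The closing assertion that $\psi$ has reduced, one-dimensional geometric fibers reduces, via \'etale descent and the reduction to the simple case carried out earlier in Section \ref{sec:contraction_construction}, to the observation that $\ol{B}(U)$ embeds into the reduced ring $\Gamma(Z, \mathscr{O}_Z)$ with finite cokernel, hence is itself reduced and of Krull dimension one. The main technical obstacle will be the compatibility check making $\phi$ a ring homomorphism, and especially pinning down how the mesa condition is used precisely to force $n_i = 1$ so that the contributions to $\delta$ sum to $m + g - 1$ and not some larger integer.
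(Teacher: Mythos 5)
Your proposal is correct and follows essentially the same route as the paper: both arguments realize $\ol{B}(U)$ as the codimension-$(m-1)+g$ subring of $\Gamma(Z,\mathscr{O}_Z)$ cut out by equality of values and the condition $[f(p_i)]\in V$ on first-order data, deduce $\delta = m+g-1$ and hence genus $g$, and verify Smyth's three conditions for (v); the paper merely interposes the factorization through $\Spec\Gamma(U,\mathscr{O}_U)$ (the ordinary $m$-fold point) where you map directly into $\Gamma(Z,\mathscr{O}_Z)$. Your explicit verification that the mesa condition forces outgoing slope $1$ at each $p_i$ is a detail the paper leaves implicit in the identification $\Gamma(Z,\mathscr{O}_Z(-\sum_i p_i))\cong\Gamma(Z,\mathscr{O}_Z(-\ol{\lambda}))$, and it is correctly handled.
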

\begin{proof}
Part (i) is immediate from Proposition \ref{prop:generalcurvesectionvalues}.

Consider the truncation
\[
  0 \to \Gamma(S, \mathscr{O}_S(-\rho)) \overset{\rho - \ol{\lambda}}{\longrightarrow} \Gamma(U, \mathscr{O}_C(-\ol{\lambda}))  \to \Big\{ f \in \Gamma(Z, \mathscr{O}_Z(-\ol{\lambda})) \, \, \Big| \, \, [f(p_i)]_{i = 1}^n \in V \Big\} \to 0
\]
of the long exact sequence of the previous proposition. Note that $\rho : \Gamma(S, \mathscr{O}_S(-\rho)) \to \Gamma(S, \mathscr{O}_S)$ is the zero map, since $E$ is nonempty and $S$ is a geometric point. 
Tacking on summands of $\Gamma(S, \mathscr{O}_S) \cong k$ to the last two terms, it follows
\[
  0 \to \Gamma(S, \mathscr{O}_S(-\rho)) \overset{\rho - \ol{\lambda} \oplus -\rho}{\longrightarrow} B(U) \to \Big\{ (f, c) \in \Gamma(Z, \mathscr{O}_Z(-\ol{\lambda})) \oplus k \, \, \Big| \, \, [ f(p_i) ]_{i = 1}^m \in V \Big\} \to 0
\]
is exact.
Then, by definition of $\ol{B}(U)$, we may identify $\ol{B}(U)$ with the ring
\[
  \Big\{ (f, c) \in \Gamma(U, \mathscr{O}_Z(-\ol{\lambda})) \oplus k \, \, \Big| \, \, [ f(p_i) ]_{i = 1}^m \in V \Big\}
\]
with multiplication $(f, c) \cdot (f', c') = (\ol{\lambda}(ff') + fc' + f'c, cc')$. This proves (ii).

(iii) is clear from the expression for $I_{\tau(E)}$ in Lemma \ref{lem:expression_for_I_tau_E}.

Now, consider the factorization
\[
  \xymatrix{
    U \ar[r] \ar[rd]_\tau & \Spec \Gamma(U, \mathscr{O}_U) \ar[d]^{\ol{\tau}} \\
   & \ol{U}
  }
\]
of $\tau$.
Note that
\begin{align*}
  \Gamma(U, \mathscr{O}_U) &\cong \{ f \in \Gamma(Z,\mathscr{O}_Z) \mid f(p_i) = f(p_j) \text{ for all }i,j \} \\
     &\cong \{ (f, c) \in \Gamma(Z, \mathscr{O}_Z) \oplus k \mid f(p_i) = 0 \text{ for all } i \} \\
     &\cong \left\{ (f ,c) \in \Gamma\left(Z, \mathscr{O}_Z\left(-\sum\nolimits_i p_i\right)\right) \oplus k \right\} \\
     &\cong \{ (f, c) \in \Gamma(Z, \mathscr{O}_Z(-\ol{\lambda})) \oplus k \},
\end{align*}
with multiplication $(f, c) \cdot (f', c') = (\ol{\lambda}(ff') + fc' + f'c, cc')$. Written this way, we see that $\ol{\tau}$ identifies $\ol{B}(U)$ with the subring of $\Gamma(U, \mathscr{O}_U)$ cut out by the condition $[ f(p_i) ]_{i = 1}^m \in V$.

It is clear that $Z$ is the normalization of both $\Spec \Gamma(U, \mathscr{O}_U)$ and $\ol{U}$. Since $\ol{B}(U)$ has codimension $g$ in $\Gamma(U, \mathscr{O}_U)$, $\tau(E)$ has genus $g$. This shows (iv).

In order to show (v), suppose that $\ol{\lambda}$ is steep and $E$ has genus one. Proposition \ref{prop:generalcurvesectionvalues} tells us that there are nonzero constants $c_1, \ldots, c_m$ so that $[ f(p_i) ]_{i = 1}^m \in V \iff \sum_{i = 1}^m c_if(p_i) = 0$. One verifies easily that the conditions of Lemma \ref{lem:elliptic_m_fold_characterization} hold: (i) holds since we
impose the condition $\sum_{i = 1}^m c_if(p_i) = 0$, (ii) holds since each $c_i$ is nonzero, and (iii) holds since the condition $\sum_{i = 1}^m c_if(p_i) = 0$ holds if each $f(p_i)$ is zero.
\end{proof}

\begin{example}
Let $C$ be a nodal curve over $S = \Spec k$, $k$ algebraically closed, so that $C$ consists of a smooth genus $g$ component, $E$, and $g$ rational curves $Z_1, \ldots, Z_g$ attached to $E$ at distinct points
$p_1, \ldots, p_g$, so that $H^1(E, \mathscr{O}_E(p_1 + \cdots + p_g)) = 0$. (A generic choice of $p_1, \ldots, p_g \in E$ will do the job.)

We can give $S$ the log structure associated to the chart $\N\delta \to k$ sending $\delta \mapsto 0$. Then $C$ may be given a compatible log structure where $\delta_{p_i}$ is $\delta$ for each $i$. Then
\[
  \ol{\lambda}_v = \begin{cases}
     \delta & \text{ if } v = E \\
     0 & \text{ otherwise }
  \end{cases}
\]
defines a piecewise linear function on $\Gamma(C)$ so that $(C, \ol{\lambda})$ is a mesa curve. Choose the open set $U$ as the complement of the points at infinity on the $Z_i$s, and let $Z = U \cap \bigcup_{i = 1}^g Z_i$.
By Proposition \ref{prop:singularity_has_correct_genus},
\begin{align*}
  \ol{B}(U) \cong \bigg\{ f \in \Gamma(Z, \mathscr{O}_Z(-\ol{\lambda})) \oplus k \, \, \bigg| \, \,  [f(p_i)]_{i = 1}^g = \vec{0} \bigg\},
\end{align*}
since $V$ is a codimension $g$ subspace of $k^g$. Simplifying further,
\begin{align*}
  \ol{B}(U) \cong &\bigg\{ f \in \Gamma(Z, \mathscr{O}_Z(-p_1 - \cdots - p_g)) \oplus k \, \, \bigg| \, \,  f(p_i) = 0 \text{ for all }i \bigg\} \\
   \cong &\bigg\{ f \in \Gamma(Z, \mathscr{O}_Z(-2p_1 - \cdots - 2p_g)) \oplus k  \bigg\} \\
   \cong &\bigg\{ (f_1, \ldots, f_g) \in k[x_1^2, x_1^3] \times \cdots \times k[x_g^2, x_g^3] \, \, \bigg| \, \, f_1(0) = \cdots = f_g(0) \bigg\},
\end{align*}
where $x_1, \ldots, x_g$ are local parameters of $p_1, \ldots, p_g$ on $Z_1, \ldots, Z_g$, respectively. That is, the singularity at $\tau(E)$ consists of $g$ cusps glued transversally at their singular points.
\end{example}

\begin{example}

When a genus one mesa is not steep, the resulting singularity need not be Gorenstein. Consider a log curve over $S = \Spec k$, $k$ algebraically closed, with tropicalization
\begin{center}
\includegraphics{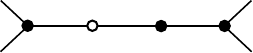}
\end{center}
where the filled dots have genus zero and the empty dot has genus one. From left to right, name the components $Z_1, E_1, E_2$, and $Z_2$.
Assume that the edges connecting $Z_1$ with $E_1$ and $Z_2$ with $E_2$ have the same length, $\delta$. Then we can give $C$ a mesa
$\ol{\lambda}$ with values
\[
  \ol{\lambda}(Z_1) = 0, \quad \ol{\lambda}(E_1) = \delta, \quad \ol{\lambda}(E_2) = \delta, \quad \ol{\lambda}(Z_2) = 0.
\]
Let $p_1$ be the point where $Z_1$ meets $E_1$, and let $p_2$ be the point where $Z_2$ meets $E_2$.
Choose $U$ as the complement of the points at infinity of $Z_1$ and $Z_2$, and let $Z = U \cap (Z_1 \cup Z_2)$.

By Proposition \ref{prop:singularity_has_correct_genus}, we know
\[
  \ol{B}(U) \cong \bigg\{ f \in \Gamma(Z, \mathscr{O}_Z(-\ol{\lambda})) \oplus k \, \, \bigg| \, \, \begin{bmatrix} f(p_1) \\ f(p_2) \end{bmatrix} \in V \bigg\}
\]
where $V$ is some codimension 1 subspace of $k^2$.

Now, $V$ is the set of pairs $(a_1, a_2) \in k^2$ so that there exists a global section
$\sigma \in \Gamma(E, \mathscr{O}_E(-\ol{\lambda}))$ with $\sigma(p_1) = a_1$ and $\sigma(p_2) = a_2$.
As a start, we may ask if there exists $\sigma_1 \in \Gamma(E_1, \mathscr{O}_{E_1}(-\ol{\lambda})) \cong \Gamma(E_1, \mathscr{O}_{E_1}(p_1))$ with $\sigma_1(p_1) = a_1$. The exact sequence
\[
  0 \to \mathscr{O}_{E_1} \to \mathscr{O}_{E_1}(p_1) \to k(p_1) \to 0
\]
gives rise to the exact sequence
\[
  0 \to k \to \Gamma(E_1, \mathscr{O}_{E_1}(p_1)) \to k \to k \to 0,
\]
which tells us that every $\sigma_1 \in \Gamma(E_1, \mathscr{O}_{E_1}(p_1))$
satisfies $\sigma_1(p_1) = 0$. This same condition must hold for any $\sigma \in \Gamma(E, \mathscr{O}_E(-\ol{\lambda}))$, so the codimension 1 condition cutting out $V$ is $a_1 = 0$.

Now, simplifying,
\begin{align*}
  \ol{B}(U) \cong &\bigg\{ f \in \Gamma(Z, \mathscr{O}_Z(-\ol{\lambda})) \oplus k \, \, \bigg| \, \, \begin{bmatrix} f(p_1) \\ f(p_2) \end{bmatrix} \in V \bigg\} \\
    \cong &\bigg\{ f \in \Gamma(Z, \mathscr{O}_Z(-p_1 - p_2)) \oplus k \, \, \bigg| \, \, f(p_1) = 0 \bigg\} \\
    \cong &\bigg\{ (f, g) \in k[x^2, x^3] \times k[y] \, \, \bigg| \, \,  f(0) = g(0) \bigg\},
\end{align*}
where $x$ and $y$ are local parameters of $p_1$ and $p_2$ on $Z_1$ and $Z_2$, respectively. That is, $\ol{C}$ consists of a rational curve with a cusp glued transversally to a smooth rational curve.

More generally, if $E_1$ were attached to $k$ rational curves and $E_2$ were attached to $l$ rational curves, $\tau(E)$ would consist of an elliptic $k$-fold Gorenstein singularity glued transversally
to a rational singularity with $l$ branches.
\end{example}

\begin{example} \label{ex:tacnode} (Variation of log structure in a contraction to a tacnode)

Suppose $E$ is a smooth elliptic curve over an algebraically closed field $k$, and $p, q$ are two closed points of $E$. Form a curve $C_0$
by attaching rational components $Z_p$ and $Z_q$ to $E$ at $p$ and $q$, respectively. If $C_0$ is given the structure of a mesa
curve whose mesa has support $E$, then we expect $\ol{C_0}$ to consist of two rational components meeting in an elliptic Gorenstein
singularity with 2 branches: a tacnode.

Consider the Stein factorization of the contraction map,
\[
  \xymatrix{
    C_0 \ar[r] \ar[rd]_\tau & C_0^+ = (\ol{C_0}, \tau_*\mathscr{O}_{C_0}) \ar[d]^{\ol{\tau}} \\
     & \ol{C_0}.
  }
\]

The intermediate curve $C_0^+$ consists of two rational components meeting in an ordinary double point, and the map
$\ol{\tau}$
collapses the ordinary double point to the tacnode. There is a one parameter family of such maps. To see this, choose a local parameter $y_p$
of $p$ in $Z_p$ and a local parameter $y_q$ of $q$ in $Z_q$. Near the node, $C_0^+$ has functions
\[
  \{ (f(y_p), g(y_q)) \mid f(0) = g(0) \}
\]
and $\ol{\tau}$ could be the inclusion of any of the subrings defined by $a\frac{df}{dy_p}(0) = \frac{dg}{dy_q}(0)$ for some choice of $a \in k^*$.

Each possible contraction is realized for a different choice of log structure on $C_0$. Let us see how.

Let $S = \Spec k[z, z^{-1}]$, $C = C_0 \times S$, and let $\pi : C \to S$ be the projection onto the second factor.
Give $S$ the log structure associated to the chart $\N \delta \to k[z, z^{-1}]$ where $\delta \mapsto 0$.
Choose an \'etale neighborhood
$U_p$ of $p$ in $C_0$ so that there is an \'etale map $U_p \to \Spec k[x_p, y_p]/(x_py_p)$. We can pull this back to an \'etale neighborhood
of $p \times S$:
\[
  \xymatrix{
    C \ar[d] & \ar[l]_{\text{\'et}} U_p \times S \ar[d]^{\text{\'et}} \\
      S  & \Spec k[z, z^{-1}, x_p, y_p]/(x_py_p) \ar[l]
  }
\]
Give $\Spec k[z, z^{-1}, x_p, y_p]/(x_py_p)$ the log structure associated to the chart
\begin{align*}
  \N\alpha_p \oplus \N\beta_p \oplus \N\delta / (\alpha_p + \beta_p \sim \delta) &\longrightarrow k[z, z^{-1}, x_p, y_p]/(x_p, y_p) \\
   \alpha_p \quad &\mapsto \quad x_p \\
   \beta_p \quad &\mapsto \quad y_p \\
   \delta \quad & \mapsto \quad 0,
\end{align*}
then give $U_p \times S$ the pulled back log structure. (We are taking advantage of the isomorphism
\begin{align*}
  \N\alpha \oplus \N\beta \oplus P / (\alpha + \beta \sim \delta) &\longrightarrow \{ (p,q) \in P \oplus P \mid p - q \in \Z\delta \} \\
   (n, m, p) &\mapsto (p + n\delta, p + m\delta)
\end{align*}
to specify the chart.)

We may repeat the process for $q$. This time, we give $U_q \times S$ the log structure pulled back from the chart (note the image of $\beta_q$!)
\begin{align*}
  \N\alpha_q \oplus \N\beta_q \oplus \N\delta / (\alpha_q + \beta_q \sim \delta) &\longrightarrow k[z, z^{-1}, x_q, y_q]/(x_q, y_q) \\
   \alpha_q \quad &\mapsto \quad x_q \\
   \beta_q \quad &\mapsto \quad zy_q \\
   \delta \quad & \mapsto \quad 0.
\end{align*}
Give $C - (\{ p, q \} \times S)$ the log structure pulled back from $S$. These local descriptions glue together in the evident way to
give $C$ a log structure. Letting $E$ be the mesa, $C \to S$ gains the structure of a mesa curve.

For $a \in k^*$, let $C_a$ be the fiber of $C$ over $z = a$. $C_a$ is isomorphic to $C_0$ as a
scheme in a canonical way. The piecewise linear function associated to $\ol{\lambda}$ is constant with respect to $a$, and the sheaf $\mathscr{O}_{C_a}(-\ol{\lambda})$ has the same isomorphism class in all fibers.
What varies is the map $\mathscr{O}_{C_a}(-\ol{\lambda}) \to \mathscr{O}_{C_a}$.

To see this, we write the sections of $\mathscr{O}_{C_a}(-\ol{\lambda})$ with respect to the the cover $U_p$, $U_q$, $E_a -  \{ p, q \}$,
$C_a - E_a$
on which $\ol{\lambda}$ lifts to $\beta_p$, $\beta_q$, $\delta$, and $0$ respectively. Let $U$ be the complement of suitably chosen points of
$C_a$, as usual. Note that the chosen lifts of $\ol{\lambda}$ all coincide on overlaps, so $\Gamma(U, \mathscr{O}_{C_a}(-\ol{\lambda}))$ is equal to
\[
  \Big\{ (h_p, h_q, h_E, h_{C-E}) \in (\mathscr{O}_{U_p} \times \mathscr{O}_{U_q} \times \mathscr{O}_{E_a - \{p, q \} } \times \mathscr{O}_{C_a - E_a})(U) \, \, \Big| \, \, h_i = h_j \text{ for all }i, j\Big\},
\]
where the equation on the right is to be interpreted as equality of each pair of functions on the intersection of their domains.

Since there is no twisting, $h_p, h_q, h_E$ are constant on $E_a$. In particular, $f_p(p) = f_q(q)$. The map to $\Gamma(U, \mathscr{O}_C)$ is
induced by multiplying $h_p, h_q, h_E, h_{C - E}$ by $y_p$, $zy_q$, $0$, and $1$, respectively. Then $\mathscr{O}_{\ol{C_a}}(U)$ is the subring of
\[
  \{ (f(y_p), g(y_q)) \mid f(0) = g(0) \}
\]
determined by $a\frac{df}{dy_p}(0) = a h_p(p) = a h_q(q) = \frac{dg}{dy_q}(0)$.
\end{example}

\subsection{Properness}

In this section we complete the proof that $\ol{\pi} : \ol{C} \to S$ is a family of curves. We already know that it is flat and has reduced geometric
fibers. It remains to show that $\ol{\pi} : \ol{C} \to S$ is proper. We do this in two steps: first we show that $\ol{\pi} : \ol{C} \to S$ is separated.
It follows that $\tau : C \to \ol{C}$ is proper. Then we show that $\tau$ has $\ol{C}$ as its scheme-theoretic image. Since the image of a proper
map in a separated scheme is proper, we then have that $\ol{\pi} : \ol{C} \to S$ is proper as desired.

\begin{proposition}
The morphism $\ol{\pi}: \ol{C} \to S$ is separated.
\end{proposition}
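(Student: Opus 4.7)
The plan is to verify the valuative criterion of separatedness, tested against discrete valuation rings (sufficient because $\psi$ is of finite type). Let $R$ be a DVR with generic point $\eta$ and closed point $s$, and suppose $f, g : \Spec R \to \ol{C}$ are two $S$-morphisms with $f|_\eta = g|_\eta$; I aim to show $f = g$.

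The key structural fact is that $\ol{C}$ is covered by two open subspaces, each separated over $S$: the affine piece $\ol{U} = \Spec \ol{B}(U)$, and the open subscheme $C - E$ of the proper (hence separated) $S$-scheme $C$. Since $\Spec R$ has only two points, and any open set containing $s$ also contains $\eta$, each of $f, g$ factors through whichever of these opens contains its closed-point image. Thus, if both factor through $\ol{U}$, then $f = g$ by separatedness of $\ol{U} \to S$; likewise if both factor through $C - E$. The remaining, mixed case---which I expect to be the main obstacle---is when, up to symmetry, $f$ factors through $\ol{U}$ with $f(s) \in \tau(E)$ while $g$ factors through $C - E$ with $g(s)$ on some section $\sigma_i(S)$.

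To rule this out, note that $f(\eta) = g(\eta)$ lies in $\ol{U} \cap (C - E) = U - E$. Let $Z$ be the irreducible component of $C$ containing the image of $f(\eta)$ under $U - E \hookrightarrow C$. Then $R$ sits inside the function field $k(Z)$, and the map $f$ makes $R$ dominate $\mathscr{O}_{\ol{C}, \tau(E)}$, which restricted along the component $\tau(Z) \subseteq \ol{U}$ is the valuation ring of $k(Z)$ at the attachment point $p = Z \cap E$. The map $g$ makes $R$ dominate $\mathscr{O}_{Z, \sigma_i(s)}$, the valuation ring of $k(Z)$ at the section point. But a DVR inside $k(Z)$ determines a unique valuation, so one would need $p = \sigma_i(s)$; since the sections are chosen disjoint from $E$ by construction, this is impossible, completing the proof.
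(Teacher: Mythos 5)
Your reduction is sound as far as it goes: covering $\ol{C}$ by the two opens $\ol{U}$ and $C-E$, disposing of the cases where both maps land in a common member of the cover, and isolating the mixed case $f(s)\in\tau(E)$, $g(s)\in\Sigma=\bigcup_i\sigma_i(S)$ is exactly the right skeleton, and it parallels the paper's reduction to showing that $\ol{U}\cap V_i\to\ol{U}\times_S V_i$ is a closed immersion for an affine cover $\{V_i\}$ of $C-E$. But the resolution of the mixed case does not work. First, the proposition must be proved in the standard situation, where $S$ is an arbitrary Noetherian finite-dimensional affine scheme; separatedness is not a fiberwise condition, so you cannot reduce to $S=\Spec k$, and over a general base there is no function field $k(Z)$ of a one-dimensional component for $R$ to sit inside: $f(\eta)$ is an arbitrary point of $\ol{U}$, and $\mathscr{O}_{\ol{C},f(s)}$ has dimension up to $\dim S+1$. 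Second, even when $S=\Spec k$ and one arranges $\mathrm{Frac}(R)=k(Z)$, the local ring of $\ol{C}$ at $\tau(E)$ restricted to the branch $Z$ is not ``the valuation ring of $k(Z)$ at $p$'': it is the non-normal one-dimensional local ring of a genus-$g$ singularity (Proposition \ref{prop:singularity_has_correct_genus}), and $Z\cap E$ may consist of several points. More seriously, the step ``a DVR inside $k(Z)$ determines a unique valuation, so $p=\sigma_i(s)$'' is precisely the uniqueness-of-center statement for the two points $\tau(E)$ and $\sigma_i(s)$ of $\ol{C}$ --- that is, the separatedness you are trying to prove --- so without an independent input the argument is circular. (There are also unaddressed prerequisites for invoking the DVR form of the valuative criterion, namely quasi-separatedness and finite type of $\psi$, neither of which has been established at this point in the paper.)

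The missing independent input is a concrete way to see that a point of $U-E$ cannot specialize simultaneously to $\tau(E)$ and to a point of $\Sigma$. The paper supplies it through Lemma \ref{lem:expression_for_I_tau_E} and Proposition \ref{prop:iso_outside_E}: the ideal $I_{\tau(E)}$ consists of the classes $\phi(a,0)$ with $a\in\Gamma(U,\mathscr{O}_C(-\ol{\lambda}))$, and the principal opens $D(\phi(a,0))$ cover $\ol{U}-\tau(E)$, pulling back to the cover of $U-E$ by the $D(\ol{\lambda}(a))$. Hence for the scheme-theoretic image $J$ of $(U-E)\cap V_i$ in $\ol{U}\times_S V_i$, the ideal $I_{\tau(E)}\otimes 1+J$ is the unit ideal, i.e.\ the closure of the diagonally embedded $(U-E)\cap V_i$ misses $\tau(E)\times V_i$, which is what rules out your mixed case. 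If you wish to keep the valuative framing, this is the fact you would need to import: a function $\phi(a,0)$ vanishing at $f(s)$ whose pullbacks along $f$ and $g$ can be compared and shown incompatible; without something of this kind the mixed case remains open.
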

\begin{proof}
Choose an affine open cover $\{ V_i \}_{i \in I}$ of $C - E$. We will identify the $V_i$s with their isomorphic images in $\tau(C - E) = \ol{C} - \tau(E)$. Then $\mathcal{U} = \{ \ol{U} \} \cup \{ V_i \}_{i \in I}$ is an affine open cover of $\ol{C}$.
It suffices to show that for each $V, W  \in \mathcal{U}$, $V \cap W \overset{\Delta}{\to} V \times_S W$ is a closed immersion. When $V = W = \ol{U}$, this is clear since affine maps are separated. If $V = V_i$ and $W = V_j$, we have that $V \cap W \overset{\Delta}{\to} V \times_S W$ is
a closed immersion, since $\pi : C \to S$ is separated.

It remains to check that each map $\ol{U} \cap V_i \overset{\Delta}{\to} \ol{U} \times_S V_i$ is a closed immersion for each $i$. Note that since $V_i$ is disjoint from $E$, $(U - E) \cap V_i = U \cap V_i$ and $\tau : U \cap V_i \to \ol{U} \cap V_i$ is an isomorphism.
Consider the diagram
\[
  \xymatrix{
     (U - E) \cap V_i \ar[r]^{\Delta} \ar@{=}[d] & (U - E) \times_S V_i \ar[d] \\
     U  \cap V_i \ar[r] \ar[d]^{\sim} \ar[r]^{\Delta} & U \times_S V_i \ar[d] \\
    \ol{U} \cap V_i \ar[r]^{\Delta} & \ol{U} \times_S V_i
  }
\]
It is now enough to show that the image of $(U - E) \cap V_i$ in $\ol{U} \times_S V_i$ is closed. We know that the image of $(U - E) \cap V_i \to (U - E) \times_S V_i$ is closed, since $\pi : C \to S$ is separated.
Since $(U - E) \times V_i \to \ol{U} \times_S V_i$ is an open immersion and $E \to \ol{U}$ surjects onto the complement of $U - E$ in $\ol{U}$, it suffices to show that the closure of the
image of $(U - E) \cap V_i \to \ol{U} \times_S V_i$ does not intersect $\tau(E) \times_S V_i$.

Let $I$ be the ideal of $\tau(E) \times_S V_i$ in $\ol{U} \times_S V_i$ and let $J$ be the ideal of the scheme theoretic image of $(U - E) \cap V_i \to \ol{U} \times_S V_i$. Now, the closure of the image of $(U - E) \cap V_i \to \ol{U} \times_S V_i$
has empty intersection with $\tau(E) \times_S V_i$ if and only if $I + J$ is the unit ideal of $\Gamma(\ol{U} \times_S V_i, \mathscr{O}_{\ol{U} \times_S V_i})$.
This holds if and only if there exist elements $g_j$ of the ideal of $\tau(E) \times_S V_i$ in $\ol{U} \times_S V_i$ and a cover $\{ U_j \}$ of $U - E \cap V_i$ so that $g_j$ restricts to a unit on $U_j$ for each $j$.

Since the image of $E$ agrees with the scheme theoretic image of $E$ in $\ol{U}$, which has ideal $I_{\tau(E)}$,
$\ol{U} - \tau(E) \cong U - E$ has a cover by the open sets $D(\ol{\lambda}(f))$ for $\phi(f, 0) \in I_{\tau(E)}$. Pulling back these functions to $\ol{U} \times_S V_i$ gives the result.
\end{proof}

\begin{proposition}
The morphism $\tau: C \to \ol{C}$ has $\ol{C}$ as its scheme theoretic image.
\end{proposition}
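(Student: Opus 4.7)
The plan is to show that the ideal $I := \ker(\mathscr{O}_{\ol{C}} \to \tau_*\mathscr{O}_C)$ defining the scheme-theoretic image of $\tau$ vanishes. Since $\ol{C}$ is locally Noetherian (being of finite type over the Noetherian base $S$) and $I$ is a coherent ideal sheaf, this reduces to checking that $\mathrm{Ass}(I) = \emptyset$.

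First I would observe that $I$ is supported on the closed subset $\tau(E) \subseteq \ol{C}$. Indeed, Proposition \ref{prop:iso_outside_E} gives an isomorphism $C - E \to \ol{C} - \tau(E)$, so the map $\mathscr{O}_{\ol{C}} \to \tau_*\mathscr{O}_C$ is already an isomorphism on $\ol{C} - \tau(E)$. Since $I \subseteq \mathscr{O}_{\ol{C}}$, we have $\mathrm{Ass}(I) \subseteq \mathrm{Ass}(\mathscr{O}_{\ol{C}}) \cap \mathrm{Supp}(I) \subseteq \mathrm{Ass}(\mathscr{O}_{\ol{C}}) \cap \tau(E)$, so it is enough to verify that no associated point of $\mathscr{O}_{\ol{C}}$ lies in $\tau(E)$.

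For this I would combine two observations. On the one hand, $\ol{C}$ is flat over $S$, so by the standard description of associated points of flat morphisms (Stacks Project, Tag 05AK) every $x \in \mathrm{Ass}(\mathscr{O}_{\ol{C}})$ is in particular an associated point of its fiber $\ol{C}_{\psi(x)}$. By Proposition \ref{prop:singularity_has_correct_genus} the geometric fibers of $\psi$ are reduced and one-dimensional; since geometric reducedness descends, $\ol{C}_{\psi(x)}$ is itself reduced, and its associated points are therefore the generic points of its irreducible components. On the other hand, $\psi|_{\tau(E)} : \tau(E) \to S$ has finite geometric fibers---one point for each connected component of the geometric fiber of $E$---hence is quasi-finite. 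It follows that $\tau(E) \cap \ol{C}_s$ is a zero-dimensional subset of the one-dimensional $\ol{C}_s$, consisting of closed points and containing no generic point of a component. Combining the two observations yields $\mathrm{Ass}(\mathscr{O}_{\ol{C}}) \cap \tau(E) = \emptyset$, which is what remained to be shown. The essential inputs---flatness of $\ol{C}$ over $S$ and reducedness of its geometric fibers---are already established in earlier sections, so this final step is essentially formal; the only mildly delicate point is invoking the correct relationship between associated points of the total space and of the fibers of a flat morphism.
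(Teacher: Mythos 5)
Your overall strategy --- kill the kernel $I = \ker(\mathscr{O}_{\ol{C}} \to \tau_*\mathscr{O}_C)$ by showing it has no associated points, using flatness to push the question into the fibers and reducedness of the fibers to locate their associated points at generic points of one-dimensional components, which cannot lie in the finite set $\tau(E)$ --- is attractive and genuinely different from the paper's argument. But it rests entirely on the parenthetical claim that $\ol{C}$ is locally Noetherian ``being of finite type over the Noetherian base $S$,'' and that claim is not available at this stage. The chart $\ol{U} = \Spec \ol{B}(U)$ is built from $\Gamma(U,\mathscr{O}_C(-\ol{\lambda})) = \colim_n \Gamma(C,\mathscr{O}_C(-\ol{\lambda}+n\Sigma))$, an infinite increasing union of finite $A$-modules, and nothing in the construction exhibits $\ol{B}(U)$ as a finitely generated $A$-algebra. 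Finite type (indeed properness) of $\psi$ is proved only later in this section, and its proof \emph{uses} the present proposition; conversely, the natural route to Noetherianity of $\ol{B}(U)$ is to embed it into $\Gamma(U,\mathscr{O}_U)$ and run an Eakin--Nagata style argument --- but that embedding is exactly the injectivity you are trying to establish. The hypothesis is therefore both unjustified and at risk of circularity, and it is load-bearing: the coherence of $I$, the implication $\mathrm{Ass}(I)=\emptyset \Rightarrow I=0$, and the relative-assassin statement that every associated point of a scheme flat over $S$ is an associated point of its fiber all require the total space to be locally Noetherian (without that one must work with weakly associated points, where the flat relative-assassin results are considerably more delicate).

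The paper sidesteps this by never leaving the category of finite modules over the Noetherian ring $A$: an element of the kernel of $\ol{B}(U) \to \Gamma(U,\mathscr{O}_U)$ is lifted to some $\Gamma(C,\mathscr{O}_C(-\ol{\lambda}+n\Sigma))$, a finite flat $A$-module whose formation commutes with base change; one then localizes and completes at a point $s$ of $S$, applies Krull's intersection theorem to reduce to $A_s/\mathfrak{m}_s^k$, filters by a composition series with quotients $k(s)$, and reads off the vanishing from the fiberwise description of Proposition \ref{prop:sections_of_oc_minus_lambda_bar_on_fiber}. That is, in effect, a hands-on version of your ``reduce to the fibers over special points of $S$ via flatness'' idea, carried out where the Noetherian theory genuinely applies. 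If you can supply an independent proof that $\ol{B}(U)$ is a finitely generated $A$-algebra, your argument goes through and is cleaner than the paper's; as written, it has a genuine gap.
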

\begin{proof}
Since $\tau : C - E \to \ol{C} - \tau(E)$ is an isomorphism, this holds away from $E$. Therefore
it suffices to check that $\tau : U \to \ol{U}$ has $\ol{U}$ as its scheme-theoretic image, which is equivalent in turn to showing that $\ol{B}(U) \to \Gamma(U, \mathscr{O}_U)$ is injective.

To that end, suppose $\phi(f, c) \in \ol{B}(U)$ and $\ol{\lambda}(f) + \pi^*c = 0$. We want to show that $(f, c) = ((\rho - \ol{\lambda})(d), -\rho(d))$ for some $d \in \Gamma(S, \mathscr{O}_S(-\rho))$.

Working \'etale locally on $S$, we may find a section $\sigma : S \to C$ so that the image of $\sigma$ is contained in the locus where $\ol{\lambda} = \rho$. Set $d = \sigma^*(f) \in \Gamma(S, \mathscr{O}_S(-\rho))$. Our goal is now to show
that $(f,c) - ((\rho - \ol{\lambda})(\sigma^*(f)), -\rho(\sigma^*(f)))$ is zero. Consider the second coordinate first:
\begin{align*}
  c + \rho(\sigma^*(f)) = \sigma^{*}(\pi^*c + \ol{\lambda}(f)) = 0.
\end{align*}
It remains to show that $f - (\rho - \ol{\lambda})(\sigma^*(f)) = 0$. Note that on the complement of $E$ in $U$, $\rho - \ol{\lambda} = \rho$ and $\rho(\sigma^*f) = -\pi^*c = \ol{\lambda}(f) = f$. It follows that
the restriction of $f - (\rho - \ol{\lambda})(\sigma^*(f))$ to $U - E$ is zero, as desired. We also have that the difference is zero on the image of $\sigma$:
\[
  \sigma^*\big( f - (\rho - \ol{\lambda})(\sigma^*(f)) \big) = \sigma^*(f) - \sigma^*(f) = 0.
\]
Our strategy is now to localize enough that we can use these two properties to conclude that $f - (\rho - \ol{\lambda})(\sigma^*(f))$ is zero.

First, since $\Gamma(U, \mathscr{O}_C(-\ol{\lambda})) = \colim_{n \in \N} \Gamma(C, \mathscr{O}_C(-\ol{\lambda} + n\Sigma))$, where $\Sigma$ is the union of the divisors of the sections $\sigma_1, \ldots, \sigma_h$, there is
some $n$ so that $f - (\rho - \ol{\lambda})(\sigma^*(f))$ admits a lift $g$ to $\Gamma(C, \mathscr{O}_C(-\ol{\lambda} + n\Sigma))$. Since $\Sigma$ is a Cartier divisor, this lift is unique, so it is equivalent to show that $g$ is zero.
In addition, as in the proof of Proposition \ref{prop:no_coh_minus_lambda_bar}, we may choose $n$ so that $H^1(C, \mathscr{O}_C(-\ol{\lambda} + n\Sigma)) = 0$. In particular, by Theorem
\ref{thm:pushforward_flat_commutes_with_base_change}, taking global sections of $\mathscr{O}_C(-\ol{\lambda} + n\Sigma)$ commutes with arbitrary base change in $S$.

We now proceed by contradiction. Suppose that $g$ is nonzero. Then there is some point $s$ of $S$ so that the Zariski stalk of $g$ at $s$ is nonzero. Denote by $A_s$ the Zariski stalk of $A$ at $s$, by $\hat{A}$ the completion of $A_s$ at
$\mathfrak{m}_s$,  and let $C_s$ and $\hat{C}$ denote the respective pullbacks of $C$ to these rings. (Note that $C_s$ is not the fiber of $C$ over $s$, contrary to our earlier usage.)
Since $\Gamma(C_s, \mathscr{O}_{C_s}(-\ol{\lambda} + n\Sigma))$ is finitely generated and $A_s$ is Noetherian, \cite[Corollary 10.19]{atiyahmacdonald} implies that the map
\[
  \Gamma(C_s, \mathscr{O}_{C_s}(-\ol{\lambda} + n\Sigma)) \longrightarrow \Gamma(C_s, \mathscr{O}_{C_s}(-\ol{\lambda} + n\Sigma)) \otimes_{A_s} \hat{A} \cong \Gamma(\hat{C}, \mathscr{O}_{\hat{C}}(-\ol{\lambda} + n\Sigma))
\]
is injective: if $g_s$ is nonzero, its restriction to $\hat{A}$ must also be nonzero. This implies in turn that there is some integer $k$ so that the restriction of $g_s$ to $A_s / \mathfrak{m}_s^k$ is nonzero.

Now, $A_s/\mathfrak{m}_s^k$ is local Artinian, so admits a finite composition series $0 = M^{(r)} \subsetneq M^{(r - 1)} \subsetneq \cdots \subsetneq M^{(0)} =A_s/\mathfrak{m}_s^k$ with filtration quotients isomorphic to
$k(s) = A_s/\mathfrak{m}_s$. By flatness of $C$, we have an exact sequence
\[
  0 \to M^{(i+1)}\mathscr{O}_{\hat{C}}(-\ol{\lambda} + n\Sigma) \to M^{(i)}\mathscr{O}_{\hat{C}}(-\ol{\lambda} + n\Sigma) \to \mathscr{O}_{C \times_S k(s)}(-\ol{\lambda} + n\Sigma) \to 0
\]
for each $i$. Since $\mathscr{O}_C(-\ol{\lambda} + n\Sigma)$ is flat, commutes with base change, and has no $H^1$, we have an induced sequence
\begin{align*}
  0 \to M^{(i+1)}\Gamma(\hat{C}, \mathscr{O}_{\hat{C}}(-\ol{\lambda} + n\Sigma)) & \to M^{(i)}\Gamma(\hat{C}, \mathscr{O}_{\hat{C}}(-\ol{\lambda} + n\Sigma)) \\
      &\to \Gamma(C \times_S k(s), \mathscr{O}_{C \times_S k(s)}(-\ol{\lambda} + n\Sigma) ) \to 0
\end{align*}
Since $g$ is nonzero, there is a first $i$ so that $g \not\in M^{(i + 1)}\Gamma(\hat{C}, \mathscr{O}_{\hat{C}}(-\ol{\lambda} + n\Sigma))$. Consider the image of $g$ in $\Gamma(C \times_S k(s), \mathscr{O}_{C \times_S k(s)}(-\ol{\lambda} + n\Sigma))$. We may consider this as an element
of $\Gamma(U \times_S \ol{k(s)}, \mathscr{O}_{C \times_S \ol{k(s)}}(-\ol{\lambda}))$. Now the description of Proposition \ref{prop:sections_of_oc_minus_lambda_bar_on_fiber} applies. The component of $g$ in the $\Gamma(S, \mathscr{O}_S(-\rho))$
component must be zero since $g$ is zero on sigma. The part of $g$ in $\Gamma(Z, \mathscr{O}_Z(-\ol{\lambda}))$ must also be zero, since $g$ restricts to 0 on the complement of $E$ and $Z$ is reduced. This contradicts that $g$ is nonzero,
and we have the result. 
\end{proof}

\section{Appendix: Uniform sets of charts for log curves}

In this appendix we define and prove the existence of uniform sets of charts for log curves. Their purpose is to provide a cover of a log curve
$\pi : C \to S$ by standard local charts, nice enough that we can reason effectively about sections of the characteristic sheaf of $C$. We begin
by recalling a description of sections of the characteristic sheaf when $S$ is a geometric point.

\begin{proposition} \label{prop:pw_linear_fibers}
Let $\pi: C \to S$ be a log smooth curve over a geometric point. Then $\Gamma(C, \ol{M}_C)$ is in one-to-one correspondence with the set of piecewise linear functions on $\trop(C_{\ol{s}})$. If $\sigma \in \Gamma(C, \ol{M}_C)$, the
corresponding element of $PL(\trop(C_{\ol{s}}))$ is determined by taking $f_v$ to be the stalk of $\sigma$ at the generic point of $v$ for all $v \in V(\Gamma)$, and taking $n_h$ to be the slope of $\sigma$ at $h$ for all $h \in H(\Gamma)$.
\end{proposition}
\begin{proof}
This is \cite[Remark 7.3]{ccuw}.
\end{proof}

This description extends to a sufficiently small \'etale neighborhood in a nice way. When generizing a log curve, its nodes may smooth out in a way that we will see is controlled by the characteristic monoid of the base. Passing to tropicalizations,
generizing collapses edges and merges vertices. Following \cite{ccuw}, we now define a weighted edge contraction of a tropical curve, where a monoid homomorphism determines which edges are collapsed.

\begin{definition}
Let $\Gamma$, $\Gamma'$ be tropical curves with edge lengths in sharp monoids $P, P'$. A \emph{weighted edge contraction} $\pi : \Gamma \to \Gamma'$ consists of
\begin{enumerate}
  \item a surjective morphism of monoids $\pi_* : P \to P'$;
  \item a function $\pi : X(\Gamma) \to X(\Gamma')$
\end{enumerate}
so that
\begin{enumerate}
  \item $\pi \circ r_{\Gamma} = r_{\Gamma'} \circ \pi$;
  \item $\pi \circ i_{\Gamma} = i_{\Gamma'} \circ \pi$;
  \item $\pi$ restricts to a bijection $H(\Gamma) \to H(\Gamma')$;
  \item for each flag $f \in F(\Gamma')$, the preimage $\pi^{-1}(f)$ has exactly one element, which is necessarily a flag.
  \item for each vertex $v \in V(\Gamma')$, the tropical curve $\pi^{-1}(v)$ is connected with genus $g(v)$;
  \item for each edge $e \in E(\Gamma)$, $\pi(e)$ is a vertex of $\Gamma'$ if and only if $\pi_*(\delta(e)) = 0$;
  \item if $\pi(e') = e \in E(\Gamma')$, then $\pi_*(\delta(e')) = \delta(e)$.
\end{enumerate}

If $\pi : \Gamma \to \Gamma'$ is a weighted edge contraction with associated map on monoids $\pi_* : P \to P'$ and $f = ((f_v), (m_h)) \in PL(\Gamma)$, then $\pi_*f \in PL(\Gamma')$ is the piecewise linear
function whose value at a vertex $v'$ is equal to $\pi(f_v)$ for any $v$ mapping to $v'$ and whose slope at a half edge $h$ is $m_h$.
\end{definition}

\begin{remark}
The direction of the maps here is reversed from that in \cite{ccuw}.
\end{remark}

\begin{definition}
Let $\Gamma$ be a tropical curve. A \emph{path} $W$ in $\Gamma$ is a sequence $v_0e_1v_1e_2 \cdots e_kv_k$ of vertices and edges in $\Gamma$ so that the vertices $v_i$ are distinct and $v_{i - 1}$ and $v_i$ are the ends of the edge $e_i$ for all $i$.
The \emph{length} $\lambda(W)$ of $W$ is $\sum_{i = 1}^k \delta_{e_k}$.
\end{definition}

Given a tropical curve $\Gamma$ with edge lengths in $P$ and a surjective morphism $\pi_* : P \to P'$, there is an induced weighted edge contraction $\pi : \Gamma \to \Gamma'$ where
\begin{enumerate}
  \item $V(\Gamma') = V(\Gamma) / \sim$ where $v \sim w$ if and only if there is a path $W$ from $v$ to $w$ so that $\pi_*(\lambda(W)) = 0$;
  \item $H(\Gamma') = H(\Gamma)$;
  \item $E(\Gamma') = \{ e \in E(\Gamma) \mid \pi_*(\lambda(e)) \neq 0 \}$
\end{enumerate}
and the rest of the data are as expected.

\begin{definition} \cite[Definition 2.5.1]{ogus}
Let $\mathscr{F}$ be a sheaf of sets on a topological space $X$. A \emph{trivializing stratification} for $\mathscr{F}$ is a locally finite partition $\Sigma$ of $X$ such that:
\begin{enumerate}
  \item each element of $\Sigma$ is connected and locally closed;
  \item if $S_1$ and $S_2$ are elements of $\Sigma$ and $S_1 \cap \ol{S_2} \neq \emptyset$, then $S_1 \subseteq \ol{S_2}$.
  \item the restriction of $\mathscr{F}$ to each $S$ in $\Sigma$ is constant.
\end{enumerate}

A point $x$ of $X$ is a \emph{central point} for $\Sigma$ if $x$ belongs to the closure of every element of $\Sigma$.
\end{definition}

\begin{definition} \label{def:unif_charts}
Let $\pi : C \to S$ be a log curve and let $\ol{s}$ be a geometric point of $S$. A \emph{uniform set of charts for $C$ around $\ol{s}$} consists of
\begin{enumerate}
  \item A chart $S \overset{f}{\to} \mathbb{A}_{P}$ so that $f$ induces an isomorphism $P \cong \ol{M}_{S, \ol{s}}$.
  \item A finite set of smooth points $X$ of $C_{\ol{s}}$.
  \item A finite set of diagrams $D_i$ indexed by the set $I =E(\trop(C)) \sqcup H(\trop(C)) \sqcup X$, of the form
  \[
    D_i = \quad \vcenter{\vbox{\xymatrix{
    C \ar[d]_{\pi} & V_i \ar[l]_{\text{\'et}} \ar[d] \ar[r]^{\text{\'et}} & V_i' \ar[r] \ar[d] & \mathbb{A}_{Q_i} \ar[d] \\
    S \ar@{=}[r] & S \ar@{=}[r] & S \ar[r]^f & \mathbb{A}_P,
  }}}
  \]
  where, in the terminology of Theorem \ref{thm:log_curve_local_structure}, $D_i$ is a germ of a smooth point around $i$ for $i \in X$, a germ of a marked point around $i$ for $i \in H(\trop(C))$, and the germ of a node around $i$ for $i \in E(\trop(C))$.
\end{enumerate}

such that
\begin{enumerate}[(A)]
  \item $S$ is admits a trivializing stratification with central point $s$;
  \item $\{ V_i \to C \}_{i \in I}$ is an \'etale cover of $C$;
  \item $\Gamma(C, \ol{M}_C) \to \Gamma(C_{\ol{s}}, \ol{M}_{C_{\ol{s}}}) \cong PL(\trop(C_{\ol{s}}))$ is an isomorphism.
  \item If $\ol{t}$ is any geometric point of $S$, the map
   \[
     PL(\trop(C_{\ol{s}})) \overset{\sim}{\to} \Gamma(C, \ol{M}_C) \to \Gamma(C_{\ol{t}}, \ol{M}_{C_{\ol{t}}}) \overset{\sim}{\to} PL(\trop(C_{\ol{t}}))
   \]
   coincides with the map $f \mapsto \phi_*f$ where $\phi : \trop(C_{\ol{s}}) \to \trop(C_{\ol{t}})$ is the weighted edge contraction associated to the map $\phi_* : \ol{M}_{S, \ol{s}} \cong P \cong \Gamma(S, \ol{M}_S) \to \ol{M}_{S, \ol{t}}$.
\end{enumerate}
\end{definition}

Uniform sets of charts are a modification of the atomic neighborhoods considered by Santos-Parker in \cite{keli_thesis}.

\begin{lemma} \label{lem:families_stay_connected}
Suppose $\pi : C \to S$ is a proper morphism where $S$ is a DVR with special point $s$ and generic point $\eta$, so that $C_s$ is nonempty and $C_\eta$ is geometrically reduced, geometrically connected, and dense in $C$. Then $C_{\ol{s}}$ is connected.
\end{lemma}
\begin{proof}
Let $B = \Gamma(C, \mathscr{O}_{C})$ and let $T = \Spec B$. Consider the Stein factorization
\[
  \xymatrix{
    C \ar[dr]_{\pi} \ar[rr]^f & & T \ar[dl]^g \\
     & S &
  }
\]
Recall that $f$ is proper with geometrically connected fibers and $g$ is finite \cite[Tag 03H0]{stacks-project}. Since $f$ is also scheme-theoretically surjective, $f$ is surjective.
Our hypotheses ensure that $T \to S$ is an isomorphism over $\eta$; write $\xi$ for the copy of the generic point of $S$ in $T$.
Note that $\xi = f(C_\eta)$ is dense in $T$: if it were not, then $f^{-1}(\ol{f(C_\eta)})$ would be a closed set between $C_\eta$ and $C$.

Let $p$ be a point in the fiber of $T$ over $s$. Since $\xi$ is dense in $T$, $p$ generizes to $\xi$. Write $\pp$ for the prime of $B$ corresponding to $p$ and $\qq$ for the prime corresponding to $\xi$.
Note that $\qq$ is the unique minimal prime of $B$, so $B/\qq$ is integral. Then the ring $(B/\qq)_{\pp}$ is a local integral domain of dimension 1, and the natural map $\Spec (B/\qq)_\pp \to T$ restricts to an isomorphism over $\xi$.
The composite map $\phi : A \to B \to (B/\qq)_{\pp}$ is injective and $\Spec (B/\qq)_{\pp} \to \Spec A$ is an isomorphism on generic points, so we have a chain of inclusions $A \overset{\phi}{\to} (B/\qq)_{\pp} \to \mathrm{Frac}(A)$. Since valuation rings are the
maximal local rings with respect to inclusion in their fields of fractions, we conclude that $\phi$ is an isomorphism. Since $B / \qq \to (B/\qq)_{\pp}$ is injective, and $A \to B / \qq \to (B/\qq)_\pp$ is surjective, $A \to B / \qq$ is also an isomorphism.

Then $T \to S$ may be identified with a retraction of $T^{red} \to T$, a universal homeomorphism. Since geometric fibers of $f$ are connected, the geometric fibers of $\pi$ are connected, and we are done.
\end{proof}

\begin{lemma} \label{lem:charts_for_dual_graph_DVR}
Let $\pi : C \to S$ be a log curve where $S$ is the spectrum of a DVR with special point $s$ and generic point $\eta$. Suppose that the data of a uniform set of charts about $\ol{s}$ are given for $\pi$ satisfying (A), (B), and (C). Then (D) holds.
\end{lemma}

\begin{proof}
There are two things to check. The first is that the tropicalization of the geometric generic fiber is the weighted edge contraction induced by $\phi: \ol{M}_{S,s} \to \ol{M}_{S, \ol{\eta}}$. The second is that the map on piecewise linear functions is the pushforward as claimed.
Both of these may be checked on a finite \'etale cover of $S$, so we may assume that the components of $C$ are in bijection with the components of $C_{\ol{\eta}}$ via the map $C_{\ol{\eta}} \to C_{\eta}$ followed by taking the closure.

Consider the charts
 \[
    \xymatrix{
    C \ar[d]_{\pi} & V_e \ar[l]_{\text{\'et}} \ar[d] \ar[r]^{\text{\'et}} & V_e' \ar[r] \ar[d] & \mathbb{A}_{Q_e} \ar[d] \\
    S \ar@{=}[r] & S \ar@{=}[r] & S \ar[r]^f & \mathbb{A}_P
  }
  \]
for $e \in \trop(C_{\ol{s}})$. Recall that $V_e' = \Spec \mathscr{O}_S[x,y]/(xy - t)$ where $t = \epsilon(\tilde{\delta_e})$ for a lift $\tilde{\delta_e}$ of $\delta_e$ to $\Gamma(S, M_S)$. Taking the fiber over $\ol{\eta}$, we see that $V_e'$ has a node
over $\ol{\eta}$ $\iff$ the restriction of $t$ to $\ol{\eta}$ is not a unit $\iff$ $\delta_e$ restricts to something nonzero in $\ol{M}_{S, \ol{\eta}}$. Therefore, the edges of $\trop(C_{\ol{\eta}})$ are in bijection with the edges of $\trop(C_{\ol{s}})$
whose lengths remain nonzero under $\phi_*$.

We have that the edges of $\trop(C_{\ol{\eta}})$ are the desired ones; next we check that we have the desired components. We have that the components of $C$ are in bijection with the irreducible components of $C_{\ol{\eta}}$. Intersecting
with the fiber over $s$, the irreducible components of $C$ induce a partition of the irreducible components of $C_s$ with a component $Z_i$ of $C_s$ belonging to the subset indexed by the component $Z$ of $C$ if and only if $Z_i \subseteq Z$. Pulling
back to $C_{\ol{s}}$, we get a partition of the components of $C_{\ol{s}}$ indexed by the components of $C_{\ol{\eta}}$. Write $Z \rightsquigarrow Z_i$ where $Z$ is a component of $C_{\ol{\eta}}$ if and only if the component $Z_i$ of $C_{\ol{s}}$ belongs to the part of the partition indexed by $Z$.

I claim that $Z \rightsquigarrow Z_1$ and $Z \rightsquigarrow Z_2$ if and only if there is a path $W$ from $Z_1$ to $Z_2$ in $\trop(C_{\ol{s}})$ so that $\phi_*(\lambda(W)) = 0$. The ``if'' direction is clear: for each edge $e$ in the path $W$,
the fiber of $V_e'$ over $\ol{\eta}$ will have just one irreducible component, implying that the two branches of $e$ smooth to a single component over $\ol{\eta}$. Conversely, suppose that there is no such path between $Z_1$ and $Z_2$.
We may then choose an edge cut $\{e_1, \ldots, e_r \}$ separating $Z_1$ from $Z_2$ in $\trop(C_{\ol{s}})$ so that $\phi_*(\delta_{e_i}) \neq 0$ for all $i$. Let $\tilde{Z} \to S$ be the family obtained by blowup up $Z$ along the closed subscheme of nodes
corresponding to the edges $e_1, \ldots, e_r$. Then the proper transforms of $Z_1$ and $Z_2$ belong to separate connected components of the geometric fiber of $\tilde{Z}$ over $s$, while the geometric fiber over $\eta$ is connected. This
contradicts Lemma \ref{lem:families_stay_connected}. We conclude that $\trop(C_{\ol{\eta}})$ is obtained by contracting the edges of $\trop(C_{\ol{s}})$ whose lengths are sent to 0 by $\phi_*: \ol{M}_{S, \ol{s}} \to \ol{M}_{S, \ol{t}}$,
as required.

Now, global sections of $\ol{M}_{C}$ are in bijection with tuples of sections $(\sigma_i)_{i \in I}$ where $\sigma_i \in \Gamma(V_i, \ol{M}_{C}) \cong Q_i$ so that whenever $\ol{t}$ is a geometric point of $C$ factoring through $V_i$ and $V_j$,
the stalks $(\sigma_i)_{\ol{t}}$ and $(\sigma_j)_{\ol{t}}$ agree. The bijection $\Gamma(C, \ol{M}_{C}) \cong PL(\trop(C_{\ol{s}}))$ is given by sending $(\sigma_i)_{i \in I}$ to the piecewise linear function $((f_v), (m_h))$ where
\begin{enumerate}
  \item for each $v \in V(\trop(C_{\ol{s}}))$, and any $i$ so that $V_i \to C$ contains $v$ in its image, $f_v$ is the stalk of $\sigma_i$ at the generic point of the component $v$, and
  \item for each $h \in H(\trop(C_{\ol{s}}))$, the slope $m_h$ is the second component of $\sigma_h \in Q_h = P \oplus \N$.
\end{enumerate}
Note that the pullback of the charts $V_i$ to $\ol{\eta}$ preserves that they are a uniform set of charts except possibly for property (D). The map
$\Gamma(C, \ol{M}_C) \to \Gamma(C_{\ol{\eta}}, \ol{M}_{C_{\ol{\eta}}}) \cong PL(\trop(C_{\ol{\eta}}))$ is then given by restricting to the fiber over $\ol{\eta}$, then taking stalks of the $\sigma_i$ at appropriate points. We know that
taking these stalks at smooth points agrees with the map $\phi_* : \ol{M}_{S, \ol{s}} \to \ol{M}_{S, \ol{\eta}}$, so we get the result.
\end{proof}

\begin{proposition}
Let $\pi : C \to S$ be a log curve and let $\ol{s}$ be a geometric point of $S$. Then there is an \'etale neighborhood $U \to S$ of $\ol{s}$ admitting a uniform set of charts around a geometric point over $\ol{s}$.
\end{proposition}
\begin{proof}
Our strategy will be to gradually ensure more of the definition holds by working steadily more locally.

By standard arguments, we may assume that $S$ is Noetherian and finite dimensional.

Next we gather the data (i)-(iii).
For each edge $e$ of $\trop(C_{\ol{s}})$ (that is, each node of $C_{\ol{s}}$), use Theorem \ref{thm:log_curve_local_structure} to choose a diagram
\[
  \xymatrix{
    C \ar[d]_{\pi} & V_e \ar[l]_{\text{\'et}} \ar[d] \ar[r]^{\text{\'et}} & V_e' \ar[r] \ar[d] & \mathbb{A}_{Q_e} \ar[d] \\
    S  & \ar[l] U_e \ar@{=}[r] & U_e \ar[r]^f & \mathbb{A}_P
  }
\]
where $V_e \to C$ is an \'etale neighborhood of $e$.
Similarly, for each half edge $h$ of $\trop(C_{\ol{s}})$ (that is, each marked point of $C_{\ol{s}}$, use Theorem \ref{thm:log_curve_local_structure} to choose a diagram
\[
  \xymatrix{
    C \ar[d]_{\pi} & V_h \ar[l]_{\text{\'et}} \ar[d] \ar[r]^{\text{\'et}} & V_h' \ar[r] \ar[d] & \mathbb{A}_{Q_h} \ar[d] \\
    S  & \ar[l] U_h \ar@{=}[r] & U_h \ar[r]^f & \mathbb{A}_P
  }
\]
where $V_h \to C$ is an \'etale neighborhood of $h$ and $Q_h = P \oplus \N\gamma_h$. Finally, for each smooth geometric point $p$ of $C_{\ol{s}}$, we may choose a diagram
\[
  \xymatrix{
    C \ar[d]_{\pi} & V_p \ar[l]_{\text{\'et}} \ar[d] \ar[r]^{\text{\'et}} & V_p' \ar[r] \ar[d] & \mathbb{A}_{Q_p} \ar[d] \\
    S  & \ar[l] U_p \ar@{=}[r] & U_p \ar[r]^f & \mathbb{A}_P
  }
\]
where $V_p \to C$ is an \'etale neighborhood of $p$ and $Q_p = P$.

By quasicompactness, we may choose a finite set $X$ of these points $p$, so that $\{ V_e \to C \}_{e \in E(\Gamma)} \cup \{ V_h \to C \}_{h \in H(\Gamma)} \cup \{ V_p \to C \}_{p \in X}$ jointly cover $C_{\ol{s}}$. Let $I = E(\Gamma) \cup H(\Gamma) \cup X$.

Taking the fiber product over all the $U_i$s, then choosing a connected component, we may assume that all of the $U_i$s are equal to a single connected \'etale neighborhood $U$ of $\ol{s}$. Let $Z$ be the complement of the
union of the images of the $V_i$s in $C$. Since $Z$ is closed and $\pi$ is proper, $\pi(Z)$ is closed and does not contain $\ol{s}$. Replacing $U$ with the complement of $(U \to S)^{-1}(\pi(Z))$ in $U$, we may assume that the $V_i$s cover $C|_U$. We now have the data (i)-(iii) and conditions (A) and (B).

Now, by Proposition \ref{prop:pw_linear_fibers}, we have that $PL(\trop(C_{\ol{s}})) \cong \Gamma(C_{\ol{s}}, \ol{M}_{C_{\ol{s}}})$, which is isomorphic in turn with the stalk $(\pi_*\ol{M}_{C})_{\ol{s}}$.

For any \'etale neighborhood $V$ of $\ol{s}$ in $U$, $\Gamma(V \times_S C, \ol{M}_{C})$ is a sharp fine monoid, since it is a submonoid of the product of the sharp fine monoids $\Gamma(V \times_U V_i, \ol{M}_C)$. Therefore we can express
the stalk $(\pi_*\ol{M}_C)_{\ol{s}}$ as a colimit of fine monoids of the form $\Gamma(V \times_S C, \ol{M}_C)$ where $V$ is an \'etale neighborhood of $\ol{s}$.
The piecewise linear functions on $\trop(C_{\ol{s}})$ form a finitely generated monoid, so we can find an \'etale neighborhood $V$ so that $\Gamma(V \times_S C, \ol{M}_C) \to (\pi_*\ol{M}_C)_{\ol{s}}$ is surjective.
Since $\Gamma(V \times_S C, \ol{M}_C)$ is sharp and finitely generated, it is Noetherian \cite[Corollary 2.1.10]{ogus}, so the congruence relation on $\Gamma(V \times_S C, \ol{M}_C)$ induced by the map
$\Gamma(V \times_S C, \ol{M}_C) \to (\pi_*\ol{M}_C)_{\ol{s}}$ is finitely generated. It follows that there is an \'etale neighborhood $W$ of $\ol{s}$ so that $\Gamma(W \times_S C, \ol{M}_C) \cong (\pi_*\ol{M}_C)_{\ol{s}}$
is an isomorphism. Replacing $U$ with $W$ and pulling back the neighborhoods $V_i$ ensures that ($C$) holds.

Now by \cite[Theorem 2.5.4]{ogus}, we may replace $U$ with a smaller \'etale neighborhood on which $\ol{M}_{S}$ is has a trivializing stratification and $s$ is a central point. Let $\ol{t}$ be any geometric point of $U$ with image $t$.
There is a generization $s \leftsquigarrow \eta$ and a specialization $\eta \to t$ where $\eta$ is the generic point of the irreducible component of the stratum containing $t$. We can extend $s \leftsquigarrow \eta$ to a finite sequence of generizations $s = s_0 \leftsquigarrow s_1 \leftsquigarrow \cdots \leftsquigarrow s_k = \eta$ and $\eta \rightsquigarrow t$ to a sequence of specializations $\eta = t_l \rightsquigarrow \cdots \rightsquigarrow t_0 = t$ where each generization and specialization
has codimension one. We can find a sequence of DVRs $A_i$ and maps $\Spec A_i \to U$ with generic point hitting $s_{i}$ and special point hitting $s_{i - 1}$. Similarly, there is a sequence of DVRs $B_i$ and maps $\Spec B_i \to U$ with generic point
hitting $t_i$ and special point hitting $t_{i - 1}$. Note that the map $\ol{M}_{t_{i-1}} \to \ol{M}_{t_i}$ is an isomorphism for all $i$. Pulling back to both sequence of DVRs and applying Lemma \ref{lem:charts_for_dual_graph_DVR} yields that (D) holds.
\end{proof}

\bibliographystyle{amsalpha}
\bibliography{contraction}

\end{document}